\newcolumntype{L}{>{\centering\arraybackslash}p{3cm}}
\newtheorem{thm}{Theorem}[section]
\newtheorem{lem}[thm]{Lemma}
\newtheorem{prop}[thm]{Proposition}
\theoremstyle{definition}
\newtheorem{defn}[thm]{Definition}
\theoremstyle{remark}
\newtheorem*{rem}{Remark}
\newcommand{\PP}{\mathbb{P}}
\newcommand{\FF}{\mathbb{F}}
\newcommand{\ZZ}{\mathbb{Z}}
\newcommand{\QQ}{\mathbb{Q}}
\newcommand{\cI}{\mathcal{I}} 
\newcommand{\cE}{\mathcal{E}}
\newcommand{\cC}{\mathcal{C}}
\newcommand{\cB}{\mathcal{B}}
\newcommand{\cD}{\mathcal{D}}
\newcommand{\set}[1]{\left\{#1\right\}}  
\renewcommand{\Im}{\operatorname{Im}}
\DeclareMathOperator{\End}{End}  
\DeclareMathOperator{\Aut}{Aut}  
\DeclareMathOperator{\Pic}{Pic}  
\DeclareMathOperator{\rk}{rank}  
\DeclareMathOperator{\sign}{sign}
\DeclareMathOperator{\GL}{GL}
\DeclareMathOperator{\Fix}{Fix}
\DeclareMathOperator{\disc}{disc}
\DeclareMathOperator{\MW}{MW}
\def\imod#1{\allowbreak\mkern10mu({\operator@font mod}\,\,#1)}
\definecolor{pistachio}{rgb}{0.58, 0.77, 0.45}
\definecolor{red(munsell)}{rgb}{0.95, 0.0, 0.24}
\definecolor{eggshell}{rgb}{0.94, 0.92, 0.84}
\title{On some K3 surfaces with order sixteen automorphism}
\pgfplotsset{compat=1.12}
\author{Paola Comparin}
\address{
	Departamento de Matem\'atica y Estad\'istica, 
	Universidad de la Frontera, Av. Francisco Salazar 1145,
	Temuco, Chile}
\email{paola.comparin@ufrontera.cl}
\author{Nathan Priddis}
\address{
	Department of Mathematics, 275 TMCB, Brigham Young University,
	Provo, UT 84602, USA.
}
\email{priddis@mathematics.byu.edu}
\author{Alessandra Sarti}
\address{Universit\'e de Poitiers,
	Laboratoire de Math\'ematiques et Applications,
	UMR 7348 du CNRS
	B\^at. H3 - Site du Futuroscope
	TSA 61125,
	11 bd Marie et Pierre Curie,
	86073 Poitiers Cedex 9,
	France
}
\email{alessandra.sarti@math.univ-poitiers.fr}
\subjclass[2010]{Primary 14J28; Secondary 14J50, 14J10, 14J27}
\keywords{K3 surfaces, non-symplectic automorphism, lattices, mirror symmetry}
\begin{document}

\begin{abstract}
We consider K3 surfaces of Picard rank 14 which admit a purely non-symplectic automorphism of order 16. 
The automorphism acts on the second cohomology group with integer coefficients and we compute the invariant sublattice for 
the action. We show that all of these K3 surfaces admit an elliptic fibration and we compute the invariant lattices in a geometric way by using
special curves of the elliptic fibration. The computation of these lattices plays an important role when one wants to study moduli spaces and mirror symmetry for lattice polarized K3 surfaces. 
\end{abstract}

\maketitle

\section{Introduction}

One of the most remarkable traits about K3 surfaces is that we can learn so much about their geometry by studying their cohomology---in particular, by studying the N\'eron-Severi lattice. 
When a K3 surface $X$ admits an automorphism of finite order, it can be used to further understand the geometry of $X$---in particular by the study of the invariant sublattice $S(\gamma)$ of cocycles in $H^2(X,\ZZ)$ that are preserved by $\gamma^*$. 
The invariant lattice is also important in other applications, such as mirror symmetry in the context of theoretical physics, see e.g. \cite{HIS}.

In this article, we will consider certain K3 surfaces that admit a non-symplectic automorphism of order 16. 
In fact, we will show that the K3 surfaces in question, together with their non-symplectic automorphisms, are unique.

The study of automorphisms of K3 surfaces is a relatively old subject, beginning with the study of symplectic automorphisms by Nikulin, Mukai, and others (see \cite{nikulin3, Mu}). 
The study of non-symplectic automorphisms has also been a topic of much interest in the past few years, again beginning with Nikulin, who studied non-symplectic involutions \cite{nikulin2}. Others have studied non-symplectic automorphisms of higher order, including Artebani-Sarti-Taki \cite{other_primes} (prime order),  Sch\"utt \cite{Schutt2010} (order $2^k$), Dillies \cite{order_six} (order 6), Al Tabbaa-Sarti \cite{order_eight} (order 8), and Al Tabbaa-Sarti--Taki \cite{order16} (order 16). Invariant lattices have also been used to understand mirror symmetry for K3 surfaces in \cite{ABS,CLPS,CP}.

As mentioned previously, in this article we will consider K3 surfaces with purely non-symplectic automorphisms of order 16. 
In \cite{order16} Al Tabbaa--Sarti--Taki classified these surfaces,
dividing them into two classes according to the rank of the Picard lattice: the first class has Picard rank 6 and the second has Picard rank 14. In the latter case, the moduli space of such K3 surfaces with the automorphism is 0-dimensional. This raises the question whether  a K3 surface of this type---together with the automorphism---is unique. 

Another natural question that arises is how to determine the invariant lattice. By \cite[Theorems 3.1 and 5.1]{order16} there are five cases of K3 surfaces with Picard rank 14 with a purely non-symplectic automorphism of order 16 and the rank of the invariant lattice for each automorphism is given. Of course, the rank is not enough to determine the lattice, and finding the invariant lattice is in general quite a difficult problem, especially when the order of the non--symplectic automorphism is not prime (see e.g. the discussion in \cite{CP}).  
These five cases are listed in Tables~\ref{tab:thm5.1} and \ref{tab:thm3.1} and described in detail in Sections~\ref{s:table1} and \ref{s:table2}. In this article we will investigate both of these questions. 

Of the two questions raised above, the first---i.e. whether the K3 surfaces with non-symplectic automorphisms are unique---was partially answered by Brandhorst in \cite[Theorem 5.9]{Br}, where he proved that the K3 surfaces in question are unique. In fact, Brandhorst showed that of the 5 cases listed above, there are only three unique K3 surfaces, and these are uniquely determined by their Picard lattice. This is done by studying the transcendental lattice $T_X$ and the ring homomorphisms
\[
\ZZ[x]/(c_n(x))\to O(T^\vee_X/T_X)
\] 
where $c_{n}(x)$ is the cyclotomic polynomial for the primitive $n$-th roots of unity and $T_X^\vee$ denotes the dual lattice of the transcendental lattice $T_X$. 
Since there are only three distinct Picard lattices among the five, we see that two of the three K3 surfaces each admits two different automorphisms of order 16. This is also suggested in the statement of Theorem~\ref{t:tables} (esp. Table~\ref{tab:thm5.1} and the statement preceding Table~\ref{tab:thm3.1}).

For the K3 surfaces in Table~\ref{tab:thm5.1}, we already know that the K3 surfaces together with their automorphisms of order 16 are unique (nonetheless we provide an alternative proof of this fact). Thus the only remaining question is can we determine the invariant lattice, which we do.

As mentioned above, in Table~\ref{tab:thm3.1} there is only one K3 surface to study, but two different types of automorphism. In order to study the two types of automorphisms, we will provide an alternate proof from that given in \cite{Br} that the K3 surface in question is unique. Our method will produce an elliptic fibration of the K3 surface; the advantage to this method, is that we will on the way show that the elliptic fibration is invariant under the action of any automorphism of either type listed in Table~\ref{tab:thm3.1}. This will also allow us to describe the Picard lattice in terms of root lattices, which was not known before. It will also lead the way to the proving that the automorphisms in question are unique. This turns out to be more difficult, as the group of automorphisms for this K3 surface is infinite.

Regarding the second question for the two types of automorphisms in Table~\ref{tab:thm3.1}---namely determining the invariant lattice---as soon as we have a description of this surface as an elliptic fibration, there are two very natural automorphisms of order 16 that arise---one of each type. As we just mentioned, these automorphisms are unique (up to conjugation), and we compute the invariant lattice of these two automorphisms.

Our result is as follows (see Sections~\ref{s:table1} and \ref{s:table2} for notation). 
\begin{thm}
If $X$ is a K3 surface with a purely non--symplectic automorphism $\gamma$ of order 16 such that $\Pic(X) = S(\gamma^8)$ has rank 14 as in one of the lines of Tables~\ref{tab:thm5.1} and \ref{tab:thm3.1}, then the pair $(X,\gamma)$ is unique.

\begin{enumerate}[(i)]
	\item If $(X,\gamma)$ has invariants as in one of the three lines of Table~\ref{tab:thm5.1}, the invariant lattice $S(\gamma)$ with $r=\rk S(\gamma)$ is as in Table \ref{case1} for each surface.

	\item If $(X,\gamma)$ is as in one of the two lines of Table~\ref{tab:thm3.1},
	the invariant lattice $S(\gamma)$ with $r=\rk S(\gamma)$ is as in Table \ref{case2}.
\end{enumerate}

\begin{table}[!htb]
    \begin{minipage}{.5\linewidth}
      \caption{Case $(i)$}
      \label{case1}
      \centering
       \begin{tabular}{c|c}
		$r$& $S(\gamma)$\\
		\hline
		13&$U\oplus E_8 \oplus A_3$ \\
		11&$T_{2,5,6}$\\
		7&$U(2)\oplus D_4\oplus\langle-8\rangle$\\
	\end{tabular}
    \end{minipage}%
    \begin{minipage}{.5\linewidth}
      \centering
        \caption{Case $(ii)$}
        \label{case2}
        \begin{tabular}{c|c}
		$r$& $S(\gamma)$\\
		\hline
		9&$T_{3,4,4}$\\
		7& $U\oplus D_4\oplus \langle -8\rangle$\\
	\end{tabular}\vspace{10pt}
    \end{minipage} 
\end{table}

\end{thm}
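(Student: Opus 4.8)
The plan is to leverage the explicit models of Al Tabbaa--Sarti--Taki together with the global Torelli theorem, and to carry out the lattice computations geometrically by means of an elliptic fibration. First I would record that, since $\rk\Pic(X)=14$, the transcendental lattice $T_X$ has rank $8=\varphi(16)$, and the purely non--symplectic hypothesis forces $\gamma^{\ast}$ to act on $H^{2,0}(X)$ by a primitive $16$-th root of unity. Hence the characteristic (and minimal) polynomial of $\gamma^{\ast}$ on $T_X$ is the cyclotomic polynomial $c_{16}(x)=x^8+1$, so $T_X$ is a rank--one $\ZZ[\zeta_{16}]$--module. This is the starting point for uniqueness: a pair $(X,\gamma)$ is determined up to isomorphism by the $\gamma^{\ast}$--equivariant Hodge structure on $H^2(X,\ZZ)$, and I would show there is essentially one choice of gluing between $\Pic(X)$ and $T_X$ compatible with the action, recovering by a different route than \cite{Br} the conclusion that the three Picard lattices of Table~\ref{tab:thm5.1}, and the single one of Table~\ref{tab:thm3.1}, each support a unique such Hodge structure.

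For part $(i)$ I would proceed case by case along the three lines of Table~\ref{tab:thm5.1}. Using the description in Section~\ref{s:table1}, I would exhibit an elliptic fibration on each surface, read off its reducible fibres and sections from the root sublattices appearing in $\Pic(X)$, and check that the fibration is preserved by $\gamma$ so that $\gamma$ descends to an automorphism of the base $\PP^1$. The invariant lattice $S(\gamma)$ is then computed by taking an explicit basis of $\gamma$--invariant classes built from the zero section, a fibre, and the components of the reducible fibres fixed by $\gamma$, writing down the intersection matrix, and identifying the abstract lattice from its signature, discriminant form, and root configuration. Matching these invariants against $U\oplus E_8\oplus A_3$, $T_{2,5,6}$, and $U(2)\oplus D_4\oplus\langle-8\rangle$ respectively completes this part; primitivity of $S(\gamma)$ in $\Pic(X)$ must be verified to pin down the discriminant.

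For part $(ii)$ I would carry out the analogous but more delicate analysis for the single surface of Table~\ref{tab:thm3.1}. Here the key input is the $\gamma$--invariant elliptic fibration constructed in Section~\ref{s:table2}, which simultaneously exhibits $\Pic(X)$ in terms of root lattices, produces the two natural order--$16$ automorphisms (one of each type) as the composition of an action on the base $\PP^1$ with a fibrewise action, and provides the geometric generators needed to compute $S(\gamma)$. For each of the two types I would assemble a basis of invariant classes from the fibration data and identify $S(\gamma)$ as $T_{3,4,4}$ and $U\oplus D_4\oplus\langle-8\rangle$ respectively by comparing rank, signature, and discriminant form; the $T$--shaped lattice $T_{3,4,4}$ should emerge directly from the corresponding tree of invariant $(-2)$--curves.

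The main obstacle is the uniqueness statement in case $(ii)$, where $\Aut(X)$ is infinite, so one cannot simply count automorphisms. The plan is to use the $\gamma$--invariance of the elliptic fibration to constrain $\gamma$: its action on the base $\PP^1$ and on a chosen fibre pins down $\gamma^{\ast}$ up to the ambiguity of composing with translations by sections and with automorphisms acting trivially on the transcendental lattice. I would then show, via the equivariant Torelli theorem and the structure of $T_X$ as a $\ZZ[\zeta_{16}]$--module established at the outset, that all purely non--symplectic order--$16$ automorphisms inducing the given action fall into exactly the two conjugacy classes recorded in Table~\ref{tab:thm3.1}. Verifying that these two classes are genuinely distinct, and that no further identifications occur under the infinite automorphism group, is the technically heaviest step.
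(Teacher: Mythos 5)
Your plan for part (i) and for the invariant--lattice computations in part (ii) matches the paper's method in substance: exhibit the invariant elliptic fibration, build $\gamma$--invariant classes from sections and fibre components (taking orbit sums for permuted curves), verify primitivity of the embedding into $\Pic(X)$, and identify the abstract lattice from rank, signature and discriminant form via Nikulin's uniqueness theorem. For uniqueness of the surfaces themselves you take the Hodge--theoretic route through the $\ZZ[\zeta_{16}]$--module structure of $T_X$, which is essentially Brandhorst's argument \cite{Br}; the paper instead gives a geometric alternative (proving the fibration is $\sigma$--invariant via Lemma~\ref{fibr_inv} and then pinning down the Weierstrass equation), but both routes are legitimate for that part.

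The genuine gap is in the step you yourself flag as heaviest: uniqueness of the automorphism up to conjugacy for the surface of Table~\ref{tab:thm3.1}, where $\Aut(X_6)$ is infinite. The paper first shows every automorphism preserves the fibration (using that the involution $\sigma^8$ is central and fixes a unique genus-one curve), obtaining $\Aut(X_6)\cong \MW(X_6)\rtimes H$ with $\MW(X_6)\cong\ZZ^6$ and $H\cong\ZZ/2\ZZ\times\ZZ/16\ZZ$; any order-16 automorphism is then $g=T_Qh$ with $h\in H$ and $Q\in\ker(1+h+\dots+h^{15})$. Your appeal to the equivariant Torelli theorem cannot finish from here: the translations $T_Q$ act trivially on $T_X$ and on $H^{2,0}(X)$, so $g$ and $h$ have literally the same action on the transcendental side, and Torelli neither distinguishes them nor produces the conjugating automorphism. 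What is actually needed --- and what the paper proves --- is the arithmetic statement that $\ker(1+h+\dots+h^{15})=\Im(1-h)$ inside $\End(\MW(X_6))$, so that $Q=P-h(P)$ and hence $g=T_PhT_P^{-1}$. A priori the index of $\Im(1-h)$ in that kernel only divides $16$; showing it equals $1$ requires identifying $\MW(X_6)\cong E_6^\vee(-2)$, establishing an $h$--equivariant isomorphism $\MW(X_6)\otimes\FF_2\cong T_X^\vee/T_X\cong\ZZ[\zeta_{16}]/(1-\zeta_{16})^6$, and checking that $(1-h)\otimes\FF_2$ is a single nilpotent Jordan block of size $6$, hence of rank $5$. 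Your proposal never touches the Mordell--Weil arithmetic, and without it the claim that the order-16 purely non-symplectic automorphisms fall into exactly the two expected conjugacy classes does not follow.
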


\begin{rem}
Two of the invariant lattices we will compute in this article, were also found by the  first two authors in \cite{CP}. There the K3 surfaces were described as (minimal resolutions of) hypersurfaces in weighted projective space, so the method is slightly different. One of the invariant lattices computed there is the second line in Table~\ref{tab:thm5.1}, (which we know is unique) computed in Section~\ref{sss:invariant}. The other invariant lattice computed in \cite{CP} is of the type of the first line of Table~\ref{tab:thm3.1}. Interestingly, the automorphism studied there is a conjugate of the isomorphism studied here. One can show that the automorphisms are in fact different (e.g. they have a different fixed point set; see Definition~\ref{def:unicite}). In fact, viewing this surface as a hypersurface in weighted projective space one can also easily see a non-symplectic automorphism of the type given in the second line of Table~\ref{tab:thm3.1}, which is again a nontrivial conjugate of the automorphism we will consider in the current article. 

\end{rem}

The paper is organized in the following way. 
In Section~\ref{s:background}, we will give a short amount of background regarding K3 surfaces, non--symplectic automorphisms and invariant lattices.
 
In Section~\ref{s:table1}, we will describe the K3 surfaces from Table~\ref{tab:thm5.1}, describing first an elliptic fibration on each surface, and then outlining an alternate proof that each K3 surface is unique. 
The automorphism group for each of the K3 surfaces in this section is known by \cite{Br}, and 
we compute the invariant lattices. 

In Section~\ref{s:table2} we will describe the K3 surface from Table~\ref{tab:thm3.1}  
providing an alternate proof of its uniqueness 
and a description of its Picard lattice. 
We then provide a description of the automorphism group of this surface, and prove the uniqueness of each type of automorphism. Finally, we compute the invariant lattices.

\section*{Acknowledgement}
The first author has been partially supported by Proyecto FONDECYT Iniciaci\'on en Investigaci\'on N. 11190428, Proyecto Anillo ACT 1415 PIA CONICYT and LIA LYSM for the collaboration France-Italy. The first and third authors have been partially supported by Programa de cooperaci\'on cient\'ifica ECOS-ANID C19E06. The third author is partially supported by the ANR project No. ANR-20-CE40-0026-01. 
We warmly thank Simon Brandhorst for useful comments. The anonymous referee provided some crucial details in the proof of unicity of the automorphisms for the third K3 surface that was missing in the first versions of this paper. We would like to thank the referee for providing these ideas and for the careful reading of the paper.

\section{Background}\label{s:background}

Recall that a {\em K3 surface} $X$ is a compact complex surface $X$ having trivial canonical bundle and such that $\dim H^1(X,\mathcal O_X) = 0$. 
Let $H^{2,0}(X)=\mathbb C\omega_X$ be the vector space of holomorphic two forms on $X$
and let $\gamma\in\Aut(X)$ be an automorphism of order 16 acting on $X$. 
The automorphism $\gamma$  is called {\em purely non--symplectic} if the induced action on $H^{2,0}(X)$ is given by $\gamma^*\omega_X=\xi_{16}\omega_X$, 
where $\xi_{16}$ is a primitive 16th root of unity. 
Observe that by \cite[Theorem 3.1]{nikulin3}, a K3 surface which admits a non-symplectic automorphism is projective. 

Given a K3 surface with a non-symplectic automorphism $\gamma$, 
the {\em invariant lattice} is
\[
S(\gamma):=\{x\in H^2(X,\ZZ) : \gamma^*x=x\}
\]
and we will denote its rank by $r$. The invariant lattice embeds primitively into $\Pic(X)$. 

By \cite[Proposition 2.7, 2.9]{order16} the fixed locus of $\gamma$ can contain only rational curves and isolated fixed points
and it is of the form:
\[
\Fix(X)=E_1\cup\ldots\cup E_{k_\gamma}\cup\{p_1,\ldots,p_{N_{\gamma}}\}
\]
with $k_\gamma$ disjoint rational curves and $N_\gamma$ isolated fixed points.

In the following theorem we summarize the results of \cite[Theorems 3.1 and 5.1]{order16}:

\begin{thm}[\cite{order16}] \label{t:tables}
Let $X$ be a K3 surface and $\gamma$ be a purely non-symplectic automorphism of order 16 on X.
Assume that $\Pic(X)=S(\gamma^8)$ has rank 14.
Then one of the following distinct cases holds:
\begin{itemize}
 
\item The fixed locus $\Fix(\gamma^8)$ contains a curve $C$ of genus $g(C)=2$ or 3. If 
$N'$ is the number of fixed points contained in $C$ then the invariants are as in Table~\ref{tab:thm5.1}.

\begin{table}[h!]
\caption{Invariants when $\Fix(\gamma^8)$ contains a curve $C$ of genus 2 or 3}
	\begin{tabular}{c|cccc|c}
		$r$&$N_\gamma$ &$k_\gamma$ & $N'$ & $g(C)$& $\Pic(X)$\\
		\hline
		13& 12&1&2&3&$U\oplus D_4\oplus E_8$\\
		11&10&1&2&2&$U(2)\oplus D_4\oplus E_8$\\
		7&4&0&2&2&$U(2)\oplus D_4\oplus E_8$\\
	\end{tabular}
\label{tab:thm5.1}
\end{table}

\item There exists an elliptic curve $C$ in the fixed locus of $\gamma^4$ and $\gamma$ acts as an automorphism of order 4 on $C$.  
The curve $C$ determines an elliptic fibration with an invariant reducible fiber of type $IV^*$.
Invariants are as in one of the lines of Table~\ref{tab:thm3.1}.
\begin{table}[h!]
\caption{Invariants when $\Fix(\gamma^4)$ contains an elliptic curve}
	\begin{tabular}{c|ccc}
		$r$ &$N_\gamma$ &$k_\gamma$ \\
		\hline
		9&8&1\\
		7&6&0
	\end{tabular}%
\label{tab:thm3.1}
\end{table}

\end{itemize}
\end{thm}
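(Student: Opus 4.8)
The plan is to prove the two assertions of the theorem---uniqueness of the pair $(X,\gamma)$ and the identification of $S(\gamma)$---separately for each fixed line of the two tables, treating the surfaces in Table~\ref{tab:thm5.1} and the surface in Table~\ref{tab:thm3.1} by one common geometric device: an explicit $\gamma$-invariant elliptic fibration. The unifying idea is that once such a fibration is in hand, the Picard lattice is recovered from its trivial lattice together with the Mordell--Weil contribution, the surface is pinned down by its transcendental lattice $T_X$, and $S(\gamma)$ is spanned by the $\gamma$-invariant divisor classes one can read off geometrically: the fiber class, the zero section, the invariant components of reducible fibers, and the classes of the curves lying in $\Fix(\gamma)$.

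For part (i) I would first produce, for each of the three lines, an elliptic fibration on $X$ whose generic fiber is preserved by $\gamma$. Since $\Pic(X)=S(\gamma^8)$ has rank $14$ and, by Theorem~\ref{t:tables}, equals $U\oplus D_4\oplus E_8$ (line~1) or $U(2)\oplus D_4\oplus E_8$ (lines~2 and~3), the hyperbolic summand $U$ or $U(2)$ furnishes an isotropic fiber class together with a section, while the $D_4\oplus E_8$ part forces reducible fibers of the matching types; this realizes $X$ as a specific Weierstrass model. Uniqueness of $X$ (for each Picard lattice, hence for each line) then follows from uniqueness of $T_X$: because $\gamma^*$ acts on $T_X\otimes\QQ$ with eigenvalue $\xi_{16}$, the lattice $T_X$ of rank $8=\phi(16)$ and signature $(2,6)$ is a rank-one $\ZZ[\xi_{16}]$-module, which is determined up to isometry, so surjectivity of the period map yields a single $X$ per Picard lattice, giving an alternate proof of \cite[Theorem 5.9]{Br}. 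To identify $S(\gamma)$ I would enumerate the $\gamma$-invariant classes---the fiber $F$, the zero section $O$, the fixed curve $C$ of genus $g(C)$, the invariant components of the reducible fibers, and the local contributions of the isolated fixed points---checking that the rank of their span equals $r$ via the fixed-locus data $(N_\gamma,k_\gamma,N',g(C))$, and then computing the intersection form on a chosen basis to obtain $U\oplus E_8\oplus A_3$, $T_{2,5,6}$, and $U(2)\oplus D_4\oplus\langle-8\rangle$, respectively.

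For part (ii) the same strategy applies, but now the invariant fibration carries a fiber of type $IV^*$ (as recorded in Theorem~\ref{t:tables}), coming from the elliptic curve $C\subset\Fix(\gamma^4)$ on which $\gamma$ acts with order $4$. I would first show that this fibration is canonical, i.e. invariant under every automorphism of either type in Table~\ref{tab:thm3.1}; this both yields the description of $\Pic(X)$ in terms of root lattices and sets up the uniqueness argument. Two natural order-16 automorphisms, one of each type, are visible directly from the Weierstrass model, and computing their fixed loci $(N_\gamma,k_\gamma)$ together with their action on $\omega_X$ matches them to the two lines. Their invariant lattices are then computed exactly as in part (i)---using the $IV^*$ fiber components, the fiber class, the section lattice, and the fixed elliptic and rational curves---giving $T_{3,4,4}$ of rank $9$ and $U\oplus D_4\oplus\langle-8\rangle$ of rank $7$.

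I expect the principal obstacle to lie entirely in part (ii): proving that each of the two automorphisms is unique up to conjugacy when $\Aut(X)$ is infinite. Here the lattice-theoretic rigidity used in part (i) is unavailable, so I would instead argue through the action on $\MW(X)$ and the geometry of the invariant $IV^*$ fibration, showing that any purely non-symplectic order-16 automorphism must preserve this fibration, act on the base $\PP^1$ in a prescribed way, and fix the $IV^*$ fiber, so that it is determined up to composition with fibration-preserving symplectic symmetries and with Mordell--Weil translations---all of which can be conjugated away. Distinguishing the two types, and verifying they are genuinely non-conjugate as in the remark, then reduces to comparing their fixed-point data through Definition~\ref{def:unicite}. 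Controlling the infinite translation part of $\Aut(X)$ is the delicate point, and it is precisely the step where the referee's contribution acknowledged above is required.
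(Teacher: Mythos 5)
You have proved the wrong statement. The statement at issue is Theorem~\ref{t:tables}, i.e.\ the classification result of \cite[Theorems 3.1 and 5.1]{order16}: under the hypothesis that $\Pic(X)=S(\gamma^8)$ has rank 14, the fixed-locus invariants $(N_\gamma,k_\gamma,N',g(C))$ and the Picard lattices are exactly those listed in Tables~\ref{tab:thm5.1} and~\ref{tab:thm3.1}, and in the second case the elliptic curve in $\Fix(\gamma^4)$ induces a fibration with an invariant $IV^*$ fiber. Your proposal instead sketches the paper's \emph{main} theorem---uniqueness of the pair $(X,\gamma)$ and the identification of the invariant lattices $S(\gamma)$---which is a different statement that takes Theorem~\ref{t:tables} as an input. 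Indeed you invoke it explicitly (``by Theorem~\ref{t:tables}, $\Pic(X)$ equals $U\oplus D_4\oplus E_8$ \dots''), so as a proof of the statement at hand your argument is circular: it assumes the very table entries it is supposed to establish, and nowhere derives the values of $N_\gamma$, $k_\gamma$, $N'$, $g(C)$ or the three Picard lattices.

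What a genuine proof requires---and what \cite{order16} actually does, the present paper merely quoting the result---is the following. Since $\Pic(X)=S(\gamma^8)$, the involution $\gamma^8$ is non-symplectic and acts trivially on the Picard lattice, so Nikulin's classification of non-symplectic involutions \cite{nikulin2} applies: a rank-14 invariant lattice of length $a\in\{2,4,6\}$ forces the three possible Picard lattices (discriminant of order $2^2$, $2^4$, $2^6$) and determines $\Fix(\gamma^8)$, namely a curve of genus $3$ or $2$ together with rational curves in the first family of cases, and a genus-one curve fixed by a power of $\gamma$ in the remaining case. One then pins down the possible actions of $\gamma$ itself by combining the holomorphic Lefschetz fixed-point formula for the eigenvalue $\xi_{16}$ with the topological Lefschetz formula and the local linearization of $\gamma$ at its fixed points; this is what yields the specific values of $N_\gamma$, $k_\gamma$ and $N'$ in each line, and, in the second case, the identification of the invariant reducible fiber as type $IV^*$. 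None of these fixed-point computations, nor the appeal to Nikulin's classification, appears in your proposal. As a secondary remark: read as a sketch of the paper's main theorem your plan is broadly aligned with the paper's strategy (invariant fibrations, Weierstrass normalization, invariant lattices spanned by invariant curve classes, and for the third surface the decomposition $\Aut(X_6)\cong\MW(X_6)\rtimes H$ together with the lemma that $\Im(1-h)=\ker(1+h+\dots+h^{15})$), but even there your uniqueness step via $T_X$ being ``a rank-one $\ZZ[\xi_{16}]$-module, determined up to isometry'' is Brandhorst's route \cite{Br} rather than the paper's geometric alternative, and as stated it is too quick: the isometry class of $T_X$ is not automatic from the module structure, which is why Brandhorst analyzes the maps $\ZZ[x]/(c_n(x))\to O(T_X^\vee/T_X)$.
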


In \cite[Theorem 6.6]{Br} we learn that there are three unique K3 surfaces with a purely non--symplectic automorphism of order 16 such that $\Pic(X)=S(\gamma^8)$ and is of rank 14. One can distinguish them according to the order of the discriminant group of their Picard lattice. 
The Picard lattices satisfy $|\disc(\Pic(X))|=2^k$, with $k\in\set{2,4,6}$. 
Observe that the K3 surfaces in Table~\ref{tab:thm5.1} have discriminant group of order $2^2$ and $2^4$, according to the Picard lattices listed in the Table. In Table~\ref{tab:thm3.1}, one might notice that the Picard lattice is not given; it will be computed in Section~\ref{s:table2} (see Theorem~\ref{lem:PciY}) where we will see that this surface has discriminant group of order $2^6$ (see also \cite{Br}).

Through the paper, for quadratic forms we will use the notation of \cite{Belc} and results of \cite{nikulin}. 
The definition of the discriminant quadratic forms $\omega_{p,k}^\epsilon, u_k,v_k$ is as follows:
\begin{enumerate}
\item for a prime $p\neq 2$, an integer $k\geq 1$ and $\epsilon\in\set{\pm 1}$, 
we define $\omega_{p,k}^\epsilon:\ZZ/p^k\ZZ\rightarrow \QQ/2\ZZ$ 
via $\omega_{p,k}^\epsilon(1)=ap^{-k}$, where $a$ is the smallest even integer that has $\epsilon$ as quadratic residue modulo $p$.
\item for $p=2$, an integer $k\geq 1$ and $\epsilon\in\set{\pm 1,\pm 5}$, we define $\omega_{2,k}^\epsilon:\ZZ/2^k\ZZ\rightarrow \QQ/2\ZZ$ via 
$\omega_{2,k}^\epsilon(1)=\epsilon 2^{-k}$.
\item for an integer $k\geq 1$, we define $u_k,v_k$ on $\ZZ/2^k\ZZ\times\ZZ/2^k\ZZ$ by the matrices
\[u_k=2^{-k}\left(\begin{array}{cc}
0&1\\
1&0\\
\end{array}\right),
\quad 
v_k=2^{-k}\left(\begin{array}{cc}
2&1\\
1&2\\
\end{array}\right).\]
Observe that $u$ and $v$ will be used to denote $u_1$ and $v_1$, respectively.
\end{enumerate}

The following result allows us to determine uniquely an even lattice from its discriminant form.
Given a discriminant quadratic form $q$ (defined on a finite abelian group)
and a prime $p$, we denote by $q_p$ the restriction of $q$ to the $p$-component $(A_q)_p$ of the discriminant group $A_q$ of $q$. Given an even lattice $L$ (all lattices in this work are considered to be even), we denote by $l(A_L)$ the length of $L$, i.e. the minimal number of generators of the discriminant group $A_L$, and by $(t_+,t_-)$ its signature (over the real numbers). We denote moreover by $q$ the discriminant quadratic form associated to the bilinear form on $L$ and by $\sign(q)$ the signature of the bilinear form. We recall the following useful theorem of Nikulin:

\begin{thm}[ {\cite[Corollary 1.13.3]{nikulin}}]\label{unicity}
An even lattice $L$ with invariants $(t_+, t_-,q)$ exists and is unique if $t_+-t_-\equiv \sign q\ (mod\ 8)$, the sum $t_++ t_-\geq 2+l(A_q)$, and $t_+, t_-\geq 1$.
\end{thm}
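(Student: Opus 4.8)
The plan is to separate the claim into an \emph{existence} part and a \emph{uniqueness} part, both phrased in terms of the genus of $L$, i.e. the set of isometry classes of even lattices isometric to $L$ over $\ZZ_p$ for every prime $p$ and over $\RR$. The starting point is the basic fact from the theory of discriminant forms that the genus of an even lattice is completely determined by the triple $(t_+,t_-,q)$: two even lattices lie in the same genus if and only if they have the same signature and isometric discriminant quadratic forms. Thus the theorem is equivalent to the two assertions that the genus prescribed by $(t_+,t_-,q)$ is (a) nonempty and (b) a single isometry class.

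For existence I would build a lattice place by place and then glue. At each prime $p$ the $p$-part $q_p$ of $q$ can be realized as the discriminant form of a $\ZZ_p$-lattice of rank $t_++t_-$: writing $q_p$ through its Jordan constituents (the forms $\omega^{\epsilon}_{p,k}$, and for $p=2$ also $u_k,v_k$) one produces a $\ZZ_p$-lattice carrying exactly these constituents and pads it with a unimodular summand, which is possible because $t_++t_-\ge l(A_q)\ge l((A_q)_p)$ leaves enough room. Prescribing the signature $(t_+,t_-)$ at the real place and invoking the Hasse--Minkowski local--global principle assembles these local data into a global even lattice; the only compatibility one must check is the congruence between the real and $2$-adic contributions, which by Milgram's formula is exactly $t_+-t_-\equiv\sign q\pmod 8$. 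This is the first hypothesis, so the genus is nonempty.

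The harder half is \emph{uniqueness}, and this is where I expect the main obstacle. The tool is Eichler's strong approximation theorem for the spin group: an \emph{indefinite} lattice of rank $\ge 3$ contains a single isometry class in each \emph{spinor} genus. The hypotheses $t_+,t_-\ge 1$ supply indefiniteness and $t_++t_-\ge 2+l(A_q)\ge 3$ supplies rank at least three, so it suffices to show that the genus determined by $(t_+,t_-,q)$ consists of a single spinor genus. Equivalently, one must prove that the spinor-norm map is surjective onto the relevant quotient of the idele group, so that no nontrivial spinor class can arise. Here the \emph{excess} $t_++t_--l(A_q)\ge 2$ is decisive: it guarantees that over each $\ZZ_p$ the lattice splits off a rank-$\ge 2$ unimodular (hyperbolic) piece orthogonal to the part supporting the discriminant form, and such a piece realizes every local spinor norm. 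Collecting these local surjectivities collapses the genus to a single spinor genus, and Eichler's theorem then collapses it to a single class.

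The technical crux is precisely this spinor-norm computation at the bad primes, above all at $p=2$, where $q$ may involve the intricate constituents $u_k,v_k,\omega^{\epsilon}_{2,k}$ and where the spinor norm is most delicate to control. The whole weight of the rank inequality $t_++t_-\ge 2+l(A_q)$ (as opposed to the weaker $t_++t_-\ge l(A_q)$ needed merely for existence) falls here: the two extra units of rank are exactly what provide the maneuvering room to hit all local spinor norms and thereby force a single spinor genus. Once that is established, the existence and uniqueness halves combine to yield the lattice $L$ with invariants $(t_+,t_-,q)$.
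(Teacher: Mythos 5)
The paper itself gives no proof of this statement---it is imported verbatim from Nikulin \cite[Corollary 1.13.3]{nikulin}---so the right comparison is with the source, and your outline (genus determined by $(t_+,t_-,q)$; existence by realizing each $q_p$ locally, padding with a unimodular piece, and gluing via the Milgram congruence $t_+-t_-\equiv\sign q \imod 8$; uniqueness by using $t_\pm\geq 1$ and $t_++t_-\geq 2+l(A_q)$ to force a single spinor genus and then invoking Eichler's theorem) is exactly the route of Nikulin's original argument, namely his Corollary 1.9.4, Theorem 1.10.1, and Theorem 1.13.2. Your sketch is correct at the level of detail given, with the acknowledged spinor-norm computations at $p=2$ being the only part deferred rather than done.
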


In this theorem, the uniqueness should be understood up to isomorphism. All of the lattices in this article satisfy the conditions of Theorem~\ref{unicity}. 

We also recall that the lattice $T_{p,q,r}$, with $p,q,r\in\ZZ$, is the root lattice whose Dynkin diagram has the form of a T and $p,q,r$ are the lengths of the three legs (see \cite{Belc}).
For example $T_{2,5,6}$ is pictured in Figure~\ref{T256}. These lattices appear to some degree in the literature (see e.g. \cite{Belc}, \cite{brieskorn}, \cite{dolgachev3}, \cite{gabrielov}, \cite{pinkham}). 
\begin{figure}[ht]
\caption{Graph for $T_{2,5,6}$.}\label{T256}
\begin{tikzpicture}[xscale=.6,yscale=.5, thick]
\begin{scope}[every node/.style={circle, draw, fill=black!50, inner sep=0pt, minimum width=4pt}]
	\draw (4,8) node[name=e3]{} -- (4,7) node{};
	
	\draw (0,8) node{}
	-- (1,8) node{}
	-- (2,8) node{}
	-- (3,8) node{}
	-- (e3);

\draw (e3) -- (5,8) node{}
	--(6,8) node{}
	--(7,8) node{}
	--(8,8) node{}
	--(9,8) node{};

\end{scope}
	
\end{tikzpicture}
\end{figure}

\begin{defn}\label{def:unicite}
Fix a primitive $16$th root of unity $\xi_{16}$. Let $X$ be a K3 surface and let $\gamma$ be an automorphism of order 16 as in the lines of Table~\ref{tab:thm5.1} and Table~\ref{tab:thm3.1} such that $f^*\omega_X=\xi_{16}\omega_X$.
We say that 
$(X,\gamma)$ is unique if for a K3 surface $Y$ and an automorphism $\varphi$ in the same line of the Tables~\ref{tab:thm5.1} and \ref{tab:thm3.1} 
as $X$ and $\gamma$, there exists an isomorphism $f:X\longrightarrow Y$ such that  $\gamma= f^{-1}\varphi f$.
\end{defn}

We will often make use of the following theorem. 

\begin{thm}[see \cite{order4},\cite{order_six}]\label{thm:tree}
	Let $R_1, R_2,\dots , R_s$ be a tree of smooth rational curves on a K3 surface $X$ and $\gamma$ a non-symplectic automorphism of finite order on $X$ leaving each of the $R_i$ invariant. Then the intersection points of the $R_i$'s are fixed by the automorphism and it suffices to know the action at one intersection point to know the action on the entire tree.
\end{thm}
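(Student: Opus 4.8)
The plan is to split the statement into its two assertions and treat them in turn: first that every node of the tree (an intersection point of two of the $R_i$) is fixed by $\gamma$, and then that the local behaviour of $\gamma$ at all the nodes is forced once it is known at a single one.

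\emph{The nodes are fixed.} Since the dual graph of $R_1,\dots,R_s$ is a tree, any two adjacent curves $R_i,R_j$ meet transversally in a single point $p_{ij}$: a second intersection point would give a double edge and hence a cycle. Because $\gamma$ preserves each curve as a set, $\gamma(R_i\cap R_j)=\gamma(R_i)\cap\gamma(R_j)=R_i\cap R_j=\{p_{ij}\}$, and the only self-map of a one-point set is the identity, so $\gamma(p_{ij})=p_{ij}$. This already gives the first assertion.

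\emph{The local action and its propagation.} At a fixed node $p=R_i\cap R_j$ the differential $d\gamma_p$ preserves the transversal decomposition $T_pX=T_pR_i\oplus T_pR_j$, hence is diagonal with eigenvalue $\alpha$ along $R_i$ and $\beta$ along $R_j$. In local coordinates $(x,y)$ adapted to the two tangent lines one has $\omega_X=u\,dx\wedge dy$ with $u(p)\neq 0$, and since $\gamma^*(dx\wedge dy)=\alpha\beta\,dx\wedge dy$ at $p$, comparison with $\gamma^*\omega_X=\zeta\,\omega_X$ (where $\zeta$ is the scalar by which $\gamma$ acts on $\omega_X$) yields the relation $\alpha\beta=\zeta$. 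On the other hand, $\gamma|_{R_i}$ is a finite-order automorphism of $R_i\cong\PP^1$; if nontrivial it has exactly two fixed points, and written as $z\mapsto\lambda z$ its derivative is $\lambda$ at one and $\lambda^{-1}$ at the other. Thus if $R_i$ meets two further curves of the tree at nodes $p$ and $q$ (both fixed, hence the two fixed points of $\gamma|_{R_i}$), the $R_i$-eigenvalue at $q$ is the inverse of that at $p$. Combining this inversion rule with $\alpha\beta=\zeta$ lets us transport the eigenvalue pair from any node to each neighbour: from $(\alpha,\beta)$ at $p=R_i\cap R_j$ we get the $R_i$-eigenvalue $\alpha^{-1}$ at the next node $q$ along $R_i$, then the transverse eigenvalue $\zeta\alpha$ at $q$ from the product relation, and so on recursively.

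\emph{Consistency, and the expected obstacle.} Because the dual graph is a tree, the recursion reaches every node along the unique path from the chosen base node and never returns to impose a competing condition; this acyclicity is exactly what guarantees that the propagation is consistent and that the action at one node determines the action on the whole tree. I expect the main point to require care to be precisely this consistency together with the degenerate configurations: when $\gamma|_{R_i}=\mathrm{id}$ the curve is pointwise fixed, so the $R_i$-eigenvalue is $1$ at each of its nodes and the transverse eigenvalue is $\zeta$ throughout; and when $R_i$ is a leaf meeting only one other curve, its eigenvalue at the single node is read off directly from the neighbour via $\alpha\beta=\zeta$. In every case the tree structure provides a unique propagation path, so no inconsistency can occur and the action is pinned down everywhere by its value at one intersection point.
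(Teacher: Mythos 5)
Your proof is correct and is essentially the standard argument: the paper itself gives no proof of this theorem but quotes it from the cited references, and the proof there is precisely your combination of (a) the set-theoretic fixedness of the single transversal intersection point of two invariant curves, (b) the diagonalization of $d\gamma$ at a node with $\det(d\gamma_p)$ equal to the scalar $\zeta$ by which $\gamma^*$ acts on $\omega_X$, and (c) the inverse-derivative rule at the two fixed points of a nontrivial finite-order automorphism of $\PP^1$ (with the pointwise-fixed case when the eigenvalue is $1$ or the curve has three or more nodes). Nothing is missing; as you note, the tree structure makes the propagation path unique, so the local action at one node determines it at all nodes.
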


\section{K3 surfaces from Table~\ref{tab:thm5.1}}\label{s:table1}

As mentioned above, there are three K3 surfaces with purely non-symplectic automorphism of order 16 with Picard rank 14 satisfying $\Pic(X)=S(\gamma^8)$.
We will consider two of them in this section, namely those listed in Table~\ref{tab:thm5.1}. We begin with the smallest discriminant. 

\subsection{The first K3 surface}\label{sec31}
Let $X_2$ be a K3 surface with a purely non-symplectic automorphism $\sigma$ of order 16, such that $\rk \Pic (X_2)=14$ and $|\disc\Pic (X_2)|=2^2$. 
This is the K3 surface on line 1 of Table~\ref{tab:thm5.1} (or line 1 of the table in \cite[Theorem 5.1]{order16}). We know in this case that $\Pic (X_2)=U\oplus E_8\oplus D_4$. 

In \cite[Theorem 6.6, Theorem 7.2]{Br}, Brandhorst has shown that such a K3 surface is unique and that $\Aut(X_2)\cong \ZZ/16\ZZ$. 
According to \cite{Br} and \cite{order16}, the K3 surface admits an elliptic fibration 
\[y^2=x^3+t^2x+t^7\]
and the automorphism $\sigma$ is given by 
\[
\sigma:(x,y,t)\mapsto (\xi_{16}^2x, \xi_{16}^{11} y, \xi_{16}^{10}t).
\]

This elliptic fibration has a fiber of type $I_0^*$ (an extended $D_4$) over $t=0$ and a fiber of type $II^*$ (an extended $E_8$) over $t=\infty$. 
This gives us a configuration of rational curves on $X_2$ given in Figure~\ref{fig:fibX2}. The dotted line represents a section and its intersections with reducible fibers is as indicated, since sections intersect simple components of the reducible fibers. There is also a genus 3 curve fixed by the involution $\sigma^8$ not pictured. 
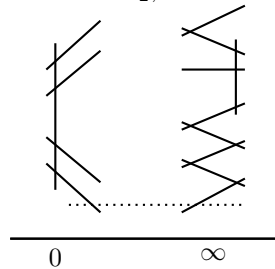
\begin{figure}[ht]
\caption{Fibration on $X_2$, the dotted line is a section}
\begin{tikzpicture}[xscale=.6,yscale=.5, thick]

\draw [thick] (0,-1)--(6,-1);
\draw (1,0.3)--(1,4.2);

\draw (0.8,1.0)--(2,-0.3);
\draw (0.8,1.7)--(2,0.5);
\draw (0.8,2.8)--(2,4);
\draw (0.8,3.5)--(2,4.8);

\node [below] at (1,-1){$0$};
\draw (5,2.3)--(5,4.3);

\draw (3.8,-0.3)--(5.2,0.6);
\draw (3.8,1.1)--(5.2,0.4);
\draw (3.8,0.9)--(5.2,1.6);
\draw (3.8,2.1)--(5.2,1.4);
\draw (3.8,1.9)--(5.2,2.6);

\draw (3.8,3.5)--(5.2,3.5);

\draw (3.8,4.6)--(5.2,3.9);
\draw (3.8,4.4)--(5.2,5.2);

\draw [dotted,thick] (1.3,-0.1)--(5.2,-0.1);

\node [below] at (4.5,-1){$\infty$};

\draw [thick] (0,-1)--(6,-1);

\end{tikzpicture}
\label{fig:fibX2}
\end{figure}

The automorphism $\sigma$ acts with order 8 on the base of $\pi$, and permutes two of the irreducible components of the $I_0^*$ fiber and it preserves all the other curves in the fiber $I_0^*$ and $II^*$. One can check that this action fixes one rational curve (the central curve of the $II^*$) and 12 isolated points, as indicated in Table~\ref{tab:thm5.1}. 

\subsubsection*{Alternate proof of uniqueness of $X_2$} Alternatively, one can prove $X_2$ is unique by the following (geometric) argument. Let $\sigma$ denote any automorphism of order $16$ acting on $X$ (not necessarily unique).  
We first prove a preparatory Lemma.
\begin{lem}\label{fibr_inv}
Let $X$ be a K3 surface with a purely non--symplectic automorphism $\sigma$ of order $2n$, such that $\sigma^n$ is an involution acting as the identity
on $\Pic(X)$. Let $f\in \Pic(X)$ be the class of an elliptic curve and assume moreover that $f$ defines a jacobian elliptic fibration with a section $s$ which is pointwise fixed by $\sigma^n$. Assume moreover that $Fix(\sigma^n)$ does not contain a curve of genus 1. We have
\begin{enumerate}
\item If $R\in\Pic(X)$ such that $\sigma(R)=R$ and $R\cdot f=0$ then $R\cdot \sigma(f)=0$.
\item If $f\cdot \sigma(f)\neq 0$ then $f\cdot \sigma(f)$ is even. This is also true if instead of $\sigma(f)$ we consider a curve 
$C\in\Pic(X)$  not in the fixed locus of $\sigma^n$ such that $\sigma^n(C)=C$, i.e. $f\cdot C\neq 0$ implies $f\cdot C$ is even.
\end{enumerate}
\end{lem}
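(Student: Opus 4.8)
The plan is to handle the two parts separately, since part~(1) is essentially formal while part~(2) carries all the geometry. For part~(1) I would use only that $\sigma^*$ acts on $H^2(X,\ZZ)$ as an isometry, hence preserves the intersection form. The computation is then simply
\[
R\cdot\sigma(f)=\sigma(R)\cdot\sigma(f)=R\cdot f=0,
\]
using $\sigma(R)=R$ in the first equality and $R\cdot f=0$ in the last. No other hypothesis is needed here.

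For part~(2) I would set $\iota:=\sigma^n$. Since $\sigma$ is purely non-symplectic of order $2n$ we get $\iota^*\omega_X=-\omega_X$, so $\iota$ is a non-symplectic involution and $\Fix(\iota)$ is a disjoint union of smooth curves. The first reduction is to show that $\iota$ preserves \emph{each} fiber of the fibration defined by $f$, not merely the fibration as a whole: as $\iota$ is the identity on $\Pic(X)$ it fixes $f$ and so induces an automorphism $\bar\iota$ of the base $\PP^1$; because $\iota$ fixes the section $s$ pointwise and $s$ meets each fiber in exactly one point, a fiber and its image under $\iota$ would share their point on $s$, forcing $\bar\iota=\mathrm{id}$ and hence $\iota(F)=F$ for every fiber $F$. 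The second reduction is local on a general smooth fiber $F$: there $\iota$ restricts to an automorphism of the elliptic curve $(F,O)$ with $O=s\cap F$ fixed, and if $\iota|_F$ were the identity then $F$ would be a genus-$1$ curve inside $\Fix(\iota)$, contradicting the hypothesis; thus $\iota|_F$ is a nontrivial involution with only finitely many fixed points on $F$.

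With these in place I would conclude by an orbit count. Let $C$ be an irreducible $\iota$-invariant curve not contained in $\Fix(\iota)$ with $f\cdot C>0$. Then $C\cap\Fix(\iota)$ is finite and lies in finitely many fibers, so for a general fiber $F$ the set $C\cap F$ consists of $f\cdot C$ distinct points, none of them in $\Fix(\iota)$. As $\iota$ preserves both $C$ and $F$ it acts on this finite set, and by the previous paragraph the action is free; hence $C\cap F$ splits into $\iota$-orbits of size two and $f\cdot C=\#(C\cap F)$ is even. Applying this with $C=\sigma(F_0)$ for a general fiber $F_0$ — an irreducible genus-$1$ curve, therefore not contained in $\Fix(\iota)$, and $\iota$-invariant since $\iota\sigma=\sigma\iota$ and $\iota(F_0)=F_0$ — yields that $f\cdot\sigma(f)$ is even.

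The hard part will not be the counting but the two reductions that unlock it: proving that $\iota$ fixes each fiber individually (this is exactly where the pointwise-fixed section is used) and excluding $\iota|_F=\mathrm{id}$ on a general fiber (exactly where the absence of a genus-$1$ curve in $\Fix(\iota)$ is used). I would also be careful that $\sigma(f)$ really is represented by an irreducible $\iota$-invariant curve, so that the finiteness of $C\cap\Fix(\iota)$ is legitimate; this is precisely why it is cleaner to prove the statement for a general invariant curve $C$ first and then specialize to $\sigma(f)$.
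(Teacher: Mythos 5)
Your proof is correct and takes essentially the same route as the paper: part (1) is the identical one-line isometry computation, and part (2) rests on the same three ingredients --- $\sigma^n$ preserves every fiber because it acts trivially on $\Pic(X)$ and fixes the section pointwise, the parity of an involution acting on the intersection points, and the hypothesis that $\Fix(\sigma^n)$ contains no genus-1 curve. The only difference is presentational: the paper argues by contradiction (an odd intersection number forces a $\sigma^n$-fixed point in $\sigma(F)\cap F'$ for \emph{every} fiber $F'$, giving infinitely many fixed points and hence a pointwise-fixed genus-1 curve), whereas you argue directly by picking a general fiber avoiding the finite set $C\cap\Fix(\sigma^n)$ so that the involution acts freely on $C\cap F$; both hinge on the identical parity observation.
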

\begin{proof}
First, we have $0=R\cdot f=\sigma(R)\cdot \sigma(f)=R\cdot \sigma(f)$. This proves the first statement. 

Now for the second statement, assume that $f\cdot \sigma(f)$ contains $2k+1$ points. Let $F$ be a fiber in the elliptic fibration determined by $f$. Observe that $\sigma^n(F)=F$ since $\sigma^n$ acts as the identity on the section and $\sigma^n(f)=f$ since $\sigma^n$ acts as the identity on $\Pic(X)$. Moreover we have also $\sigma^n(\sigma(F))=\sigma(\sigma^n(F))=\sigma(F)$. This means that $\sigma^n$ acts on the set of 
$2k+1$ points in the intersection of $F$ and $\sigma(F)$, but $\sigma^n$ is an involution so that we have a fixed point in the intersection of $F$ with $\sigma(F)$. If we take another generic fiber $F'\in f$ we have also that $F'$ meets $\sigma(F)$ in $2k+1$ points (with multiplicity) since all the fibers in the fibration $\sigma(f)$ are equivalent. For the same reason as before $\sigma^n$ fixes one point in the intersection. We find that $\sigma^n$ contains an infinite 
number of fixed points on $\sigma(F)$, this means that $\sigma(F)$ is a fixed elliptic curve for $\sigma^n$ but this is impossible by the assumption on the fixed locus of $\sigma^n$. 
\end{proof}

Now assume $\Pic(X)=S(\sigma_8)=U\oplus E_8\oplus D_4$ as in the first line of Table~\ref{tab:thm5.1}. We obtain a fibration on $X$ with a fiber of type $II^*=\tilde{E_8}$ and a fiber of type $I_0^*=\tilde{D_4}$ 
which we know it is $\sigma^8$--invariant.  
We show that it is also $\sigma$--invariant by using Lemma \ref{fibr_inv}. 

For this observe that the automorphism $\sigma^8$ fixes a unique curve $C$ of genus $3$ which is hence $\sigma$--invariant. Since the section $s$ is fixed by $\sigma^8$ and each fiber contains $4$ fixed points, the curve $C$ is a 3-section of the fibration. 
One can use now \cite[Lemma 5]{order4} to compute that $f\cdot \sigma(f)\leq 3$. By Lemma \ref{fibr_inv} we can exclude the cases where $f\cdot\sigma(f)$ is odd. We will now show that $f\cdot \sigma(f)=2$ cannot happen, from which we can then deduce that $f\cdot\sigma(f)=0$.

Assume that $f\cdot \sigma(f)=2$. This means that if $F\in f$ then $\sigma(F)$ is a 2-section of the fibration induced by $f$ and the two points in the intersection of $\sigma(F)$ 
with the fibers of the fibration determined by $f$ are exchanged by $\sigma^8$ (since $\sigma^8$ does not fix an elliptic curve); for the generic fiber these two points are distinct. 

Let us look how the 2-section $\sigma(F)$ meets the singular fiber $II^*$.  We denote the external components as follows (see Figure~\ref{fig:II*fiber}): $\mathcal{E}_1$ is the fiber of multiplicity $2$, $\mathcal{E}_8$ is the fiber with multiplicity 1, $\mathcal{E}_9$ is the fiber of multiplicity $3$. Observe that $\sigma(F)$ can not meet the (internal) components that have multiplicity bigger than $2$. 

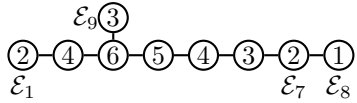
\begin{figure}[ht]
	\caption{Dynkin diagram for the $II^*$ fiber. Each vertex is labelled with the multiplicity of the corresponding curve.}\label{fig:II*fiber}
	\begin{tikzpicture}[xscale=.6,yscale=.5, thick]
	\begin{scope}[every node/.style={circle, draw, inner sep=1pt, minimum width=4pt}]
	
	\node [name=e1] at (1,1){2};
	\node [name=e2] at (2,1){4};
	\node [name=e3] at (3,1){6};
	\node [name=e4] at (4,1){5};
	\node [name=e5] at (5,1){4};
	\node [name=e6] at (6,1){3};
	\node [name=e7] at (7,1){2};
	\node [name=e8] at (8,1){1};
	\node [name=e9] at (3,2){3};
	\draw (e3) -- (e9);
	
	\draw (e1) -- (e2) -- (e3) -- (e4) -- (e5) -- (e6) -- (e7) -- (e8);

	\end{scope}

	\node [below] at (1,.7){$\cE_1$};
	\node [below] at (8,.7){$\cE_8$};
	\node [below] at (7,.7){$\cE_7$};
	\node [left] at (2.9,2){$\cE_9$};

	\end{tikzpicture}
\end{figure}

There is an internal component of multiplicity $2$ (call it $\cE_7$) which is fixed by $\sigma^8$ and $\sigma$ preserves this curve (no curves fixed by $\sigma^8$ are exchanged by $\sigma$; see \cite[Section 5]{order16}). Observe that $F\cdot \cE_7=0$ since $\cE_7$ is contained in a fiber. Now by Lemma~\ref{fibr_inv} we have \[
\sigma(F) \cdot \cE_7=\sigma(F)\cdot \sigma(\cE_7)=0,
\] 
since $\cE_7$ is $\sigma$-invariant. So $\sigma(F)$ cannot meet $\cE_7$. 

The same argument applies to the external component $\mathcal{E}_1$ of multiplicity $2$. Clearly $\sigma(F)$ cannot meet the component $\mathcal{E}_9$, because it is a 2-section and that fiber has multiplicity three. We are left with only the possibility that $\sigma(F)$ intersects $\cE_8$ in two points.   

Let us now consider the elliptic fibration determined by $\sigma(f)$. By the previous discussion, this means that the rational curve $\mathcal{E}_8$ is a 2-section of this
fibration. We have hence a $2:1$ map $\mathcal{E}_8 \rightarrow \mathbb P^1$ which has necessarily 2 ramification points, i.e. there are two fibers in the fibration $\sigma(f)$ which intersects $\mathcal{E}_8$ in only one point, which is necessarily a fixed point for the action of $\sigma^8$ on 
$\mathcal{E}_8$, since $\sigma^8$ preserves all these curves. But $\sigma^8$ acts as an involution on  $\mathcal{E}_8$ (one has to check 
the action of $\sigma^8$ on the singular fiber of the fibration to identify the fixed rational curves: the $II^*$ fiber from the first fibration contains four of them) with two fixed points: one is the intersection point with the internal component of multiplicity 2 and the other is the intersection point with the section $s$. 

Since any fiber of $\sigma(f)$ meets the curve $\mathcal{E}_8$ with multiplicity $2$, then by Hurwitz's formula we find that we have two fibers (reducible or not) of the fibration that meet $\mathcal{E}_8$ in one point of multiplicity two. Since $\sigma^8$ preserves each of them, these should meet $\mathcal{E}_8$ in the intersection point of $s$ with $\mathcal{E}_8$. Observe that they can not meet $\mathcal{E}_8$ in the intersection point of $\mathcal{E}_8$ with $\mathcal{E}_7$ since then they would cut $II^*$ with multiplicity $3$. This means that these two fibers of the fibration $\sigma(f)$ intersect, which is impossible. This shows the following (with the previous notations):
\begin{prop}
The fibration induced by $U\oplus E_8\oplus D_4$ is $\sigma$--invariant.
\end{prop}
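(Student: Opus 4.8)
The goal is to show $\sigma(f)=f$, where $f$ is the class of the fiber of the elliptic fibration determined by the copy of $U$ in $\Pic(X)=U\oplus E_8\oplus D_4$. Since $f$ is a primitive isotropic nef class and $\sigma(f)$ is another such class, it suffices to prove $f\cdot\sigma(f)=0$: in a lattice of signature $(1,\rk\Pic(X)-1)$ any maximal totally isotropic subspace has rank one, so two primitive isotropic classes with vanishing intersection are proportional, hence equal once both are nef. The plan is therefore to pin down the integer $f\cdot\sigma(f)$ by combining a parity constraint with an upper bound.

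The parity constraint comes directly from Lemma~\ref{fibr_inv}. Here $\sigma^8$ is an involution acting as the identity on $\Pic(X)$, its fixed locus contains the genus $3$ curve $C$ but no curve of genus $1$, and the section $s$ is pointwise fixed; thus part~(2) of the Lemma applies and $f\cdot\sigma(f)$, if nonzero, is even. For the upper bound I would use that the fixed genus $3$ curve $C$ is a $3$-section of $f$ (each fiber carries four $\sigma^8$-fixed points, of which $C$ accounts for three once the point on $s$ is removed), and invoke \cite[Lemma~5]{order4} to conclude $f\cdot\sigma(f)\le 3$. Combining the two, the only nonzero value left to rule out is $f\cdot\sigma(f)=2$.

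The crux---and the step I expect to be the main obstacle---is excluding $f\cdot\sigma(f)=2$, since this requires a hands-on analysis of how the $2$-section $\sigma(F)$ meets the $II^*$ fiber. Writing the external components of $II^*$ as $\cE_1$ (multiplicity $2$), $\cE_9$ (multiplicity $3$) and $\cE_8$ (multiplicity $1$), I would argue as follows. A $2$-section cannot meet a component of multiplicity $\ge 3$, which rules out all internal components and $\cE_9$. For the multiplicity-$2$ components---the external $\cE_1$ and the internal $\cE_7$---these are $\sigma$-invariant curves contained in a fiber, so $F\cdot\cE_i=0$, and part~(1) of Lemma~\ref{fibr_inv} forces $\sigma(F)\cdot\cE_i=\sigma(F)\cdot\sigma(\cE_i)=0$. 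Hence $\sigma(F)$ can only meet $\cE_8$, and being a $2$-section it meets $\cE_8$ in two points.

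Finally I would derive a contradiction from the fibration $\sigma(f)$, for which $\cE_8$ is now a $2$-section. The induced double cover $\cE_8\to\PP^1$ has two ramification points by Hurwitz, each necessarily a $\sigma^8$-fixed point of $\cE_8$; but $\sigma^8$ acts on $\cE_8\cong\PP^1$ as an involution with exactly two fixed points, namely the intersection with $s$ and the intersection with the internal multiplicity-$2$ component. Meeting $\cE_8$ at the latter point would give intersection multiplicity $3$ with $II^*$, which is excluded, so both ramified fibers of $\sigma(f)$ must pass through the single point $s\cap\cE_8$---forcing two distinct fibers of the same fibration to intersect, a contradiction. Therefore $f\cdot\sigma(f)=0$, and the fibration is $\sigma$-invariant.
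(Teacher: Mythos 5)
Your proposal is correct and follows essentially the same route as the paper: the parity constraint from Lemma~\ref{fibr_inv}, the bound $f\cdot\sigma(f)\le 3$ via the genus-$3$ curve as a $3$-section and \cite[Lemma~5]{order4}, the component-by-component analysis of the $II^*$ fiber forcing $\sigma(F)$ to meet only $\cE_8$, and the final Hurwitz/ramification contradiction for the fibration $\sigma(f)$. The only addition is your explicit opening remark that two primitive isotropic nef classes with zero intersection must coincide, which the paper leaves implicit.
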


We now compute the Weierstrass equation 
in order to conclude the proof of unicity. A priori, we could have more K3 surfaces belonging to the same family with such a non--symplectic automorphism.

We need to understand the action of $\sigma$ on the base of the fibration. Observe that $S(\sigma^8)$ has rank 14 and length 2, so by a result of Nikulin \cite{nikulin2} 
the involution $\sigma^8$ preserves
the fibration and it fixes a curve of genus three and 6 rational curves. 
By checking the action on the fibration (e.g. noticing that the central component of the $II^*$ fiber is preserved, and then using Thereom~\ref{thm:tree})
one can see that the section must be a fixed curve for the action of $\sigma^8$. A similar argument for $\sigma^4$ shows that the section is not pointwise fixed by $\sigma^4$. Thus $\sigma$ acts with order 8 on the base, and so the involution $\sigma^8$ acts on each fiber as $y\mapsto -y$.

The automorphism $\sigma$ has 2 fixed points on the base $\mathbb P^1$, we can assume that these are $0$ and $\infty$. Assume $I_0^*$ is over $t=0$ and $II^*$ over $t=\infty$.

The Weierstrass equation has the form
$y^2=x^3+A(t)x+B(t),\ t\in\PP^1$. Since this is a K3 surface, $A(t),B(t)$ can be thought of homogeneous polynomials of total degree 8 and 12, respectively. 
The discriminant is $\Delta(t)=4A(t)^3+27B(t)^2$ and it has total (homogeneous) degree 24. 
We will supress the second homogeneous variable, and think of these as only polynomials in $t$.

The type of singular fibers is determined by the vanishing order of $A,B,\Delta$.
Following \cite{Mir}, we denote $a(t_0)$ (resp. $b(t_0), \delta(t_0)$) the order of vanishing of $A(t)$  (resp. $B(t), \Delta(t)$) at $t_0$. 
Observe that by \cite[Table IV.3.1]{Mir}, the discriminant $\Delta$ has order of vanishing $10$ at $t=\infty$ and $6$ at $t=0$ for this elliptic fibration. 

Since the Euler number of the fibers is 6 for $I_0^*$ and 10 for $II^*$ we are left with 8 and since there are no more reducible curves we can have either $8I_1$ or $4II$.
The latter is not possible, since the action of $\sigma$ of the basis has order 8, thus there can not be orbits of length 4. 
Thus $\Delta= t^6\cdot R(t)$ with $R(t)$ a polynomial of degree 8 with simple roots and not vanishing in $t=0$. The automorphism $\sigma$ permutes the $8I_1$.
Moreover, the degree of $B(t)$ has to be 7 (since by \cite{Mir} $b(\infty)=5$) and the degree of $A(t)$ is less or equal than 4 since $a(\infty)\geq4$.

By $a(0)=2$ or $\geq 3$ and $b(0)=3$ or $\geq 4$ we see 
\[
A(t)=t^2\cdot P(t),\quad B(t)=t^3\cdot Q(t)
\]
with $\deg(P)\leq2$ and $\deg(Q)=4$. Assume that the action on the basis is $t\mapsto \zeta_8 t$. After applying this transformation to $Q(t)$ we must have $Q(t)\mapsto \zeta_8^l Q(t)$ for some power $l$ of $\zeta_8$. Since the coefficient of $t^4$ can not be zero in $Q(t)$ we find that 
$Q(t)=c_4\cdot t^4$ for a non zero constant $c_4$ which we can suppose equal to $1$. This forces $A(t)$ to disappear exactly to the order 2 on $t=0$, which gives $P(t)=c_0$ for some non zero constant $c_0$ which we can suppose again equal to $1$. So we get 
\[
y^2=x^3+t^2x+t^7.
\]

\subsubsection{Invariant Lattice}
The next step is to calculate the invariant lattice for this automorphism. 
In order to describe the invariant lattice, we first describe a set of curves from the configuration in Figure~\ref{fig:fibX2} that generate $\Pic (X_2)$, and from these we build a generating set for $S(\sigma)$. To do this we first need to label the curves in the configuration in Figure~\ref{fig:fibX2}. 

Let $\cD_1, \ldots, \cD_5$ denote the irreducible components of the $I_0^*$ fiber, with $\cD_1$ being the central curve, $\cD_2$ the curve intersecting the section, and $\cD_3$ and $\cD_4$ the two curves permuted by $\sigma$.
Similarly, let $\cE_1,\ldots, \cE_9$ denote the irreducible components of the $II^*$ fiber (we do not specify which curve is which, as each of them is required to generate every lattice we consider), and let $S$ denote the section.
The genus 3 curve from Theorem~\ref{t:tables} intersects each of $\cD_3, \cD_4$ and $\cD_5$, but this will be irrelevant for our computations. 

We can represent this configuration with the incidence graph in Figure~\ref{fig:X2graph}. The vertex represented by a double circle represents the section $S$. 

\begin{figure}[ht]
\caption{Configuration of curves on $X_2$. The node with two circles represents the section.}

\centering
\begin{tikzpicture}[xscale=.6,yscale=.5, thick]
	
\begin{scope}[every node/.style={circle, draw, fill=black!50, inner sep=0pt, minimum width=4pt}]
	\draw (4,8) node{} -- (4,7) node{}
	-- (4,6) node[name=e3]{}
	-- (4,5) node{}
	-- (4,4) node{}
	-- (4,3) node{}
	-- (4,2) node{}
	-- (4,1) node[name=e9]{};

	\draw (e3) -- (3,6) node{};
	\draw (e9) -- (3,1) node[double, name=section]{}
	-- (2,1) node{}
	-- (1,1) node[name=d1]{}
	-- (0,1) node{};
	\draw (2,2) node{} -- (d1) -- (0,2) node{};
\end{scope}

\end{tikzpicture}
\label{fig:X2graph}
\end{figure}

Since $\rk \Pic (X_2)=14$, at least one of these fifteen curves is redundant. From the theory of elliptic fibrations, the $II^*$ fiber is equivalent to the $I_0^*$ fiber. Thus we can leave out a component of multiplicity one of the $I_0^*$ fiber. 
The remaining curves generate a lattice of rank 14, so the set $\{\cE_1,\dots, \cE_9, S, \cD_1,\dots,\cD_4\}$ is a minimal set of generators for the lattice, as illustrated in Figure~\ref{fig:X2Picbasis}. We have drawn the redundant curve as an empty circle, and the corresponding intersection as a dotted line. 

Also from the theory of elliptic fibrations, one can see that this is the lattice $U\oplus D_4\oplus E_8$. Alternatively, considering the incidence matrix, one can compute the discriminant quadratic form of the lattice generated by these curves. 
This can be done, for example, using the computer algebra system MAGMA, and the function \verb|disc| written in the appendix of \cite{CP}. From this computation, we see that the discriminant quadratic form is $v$ and the rank is 14. Thus by Theorem~\ref{unicity}, this lattice is $U\oplus E_8\oplus D_4$, which is the same as $\Pic(X_2)$. Most importantly, we have exhibited a minimal set of generators for $\Pic(X_2)$. 

\begin{figure}[ht]
\caption{Graph of a minimal set of generators for $\Pic(X_2)$. The filled vertices form a minimal set of generators and the empty vertex is redundant.}
\centering
\begin{tikzpicture}[xscale=.6,yscale=.5, thick]
\begin{scope}[every node/.style={circle, draw, fill=black!50, inner sep=0pt, minimum width=4pt}]
	\draw (4,8) node{} -- (4,7) node{}
	-- (4,6) node[name=e3]{}
	-- (4,5) node{}
	-- (4,4) node{}
	-- (4,3) node{}
	-- (4,2) node{}
	-- (4,1) node[name=e9]{};
	
	\draw (e3) -- (3,6) node{};
	\draw (e9) -- (3,1) node[name=section]{}
	-- (2,1) node{}
	-- (1,1) node[name=d1]{}
	-- (0,1) node{};
	\draw (d1) -- (0,2) node{};
\end{scope}
	
	\node[style=circle, draw, inner sep=0pt, minimum width=4pt](d5) at (2,2) {};
	\draw[dotted] (d5) -- (d1);
	
\end{tikzpicture}
\label{fig:X2Picbasis}
\end{figure}
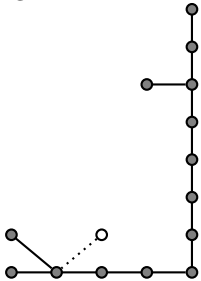

In order to compute the invariant lattice, we consider divisors invariant under $\sigma$. 
Let $L_\cB$ denote the lattice generated by the set $\cB=\{\cE_1,\dots, \cE_9, S, \cD_1,\cD_2, \cD_3+\cD_4, \cD_5\}$, as pictured in the (weighted) graph in Figure~\ref{fig:LBbasis}. The last square vertex represents $\cD_3+\cD_4$ and so has self-intersection $(\cD_3+\cD_4)^2=-4$. The intersection with $\cD_1$ gives $\cD_1\cdot(\cD_3+\cD_4)=2$ as indicated by the weight on the edge of the graph. 
Since $\sigma$ permutes $\cD_3$ and $\cD_4$, but leaves the other curves invariant, we see that $L_\cB\subseteq S(\sigma)$ and we will see that they have the same rank. We will show that these lattices are equal by showing that the embedding $L_\cB\hookrightarrow \Pic(X_2)$ is a primitive embedding.

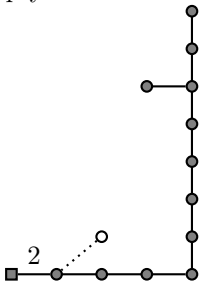
\begin{figure}[ht]
\caption{Graph of generators of $L_\cB$.The square vertex represents the divisor $\cD_3+\cD_4$, and the empty dot is redundant.}

\centering
\begin{tikzpicture}[xscale=.6,yscale=.5, thick]
\begin{scope}[every node/.style={circle, draw, fill=black!50, inner sep=0pt, minimum width=4pt}]
	\draw (4,8) node{} -- (4,7) node{}
	-- (4,6) node[name=e3]{}
	-- (4,5) node{}
	-- (4,4) node{}
	-- (4,3) node{}
	-- (4,2) node{}
	-- (4,1) node[name=e9]{};
	
	\draw (e3) -- (3,6) node{};
	\draw (e9) -- (3,1) node[name=section]{}
	-- (2,1) node{}
	-- (1,1) node[name=d1]{};
\end{scope}
	
	\node[style=rectangle, draw, inner sep=0pt, minimum width=4pt,minimum height=4pt, fill=black!50,](d34) at (0,1) {};
	\draw (d34) -- (d1);
	\node  at (.5,1.5) {2};
	
	\node[style=circle, draw, inner sep=0pt, minimum width=4pt](d5) at (2,2) {};
	\draw[dotted] (d5) -- (d1);
	
\end{tikzpicture}
\label{fig:LBbasis}
\end{figure}

From Table~\ref{tab:thm5.1}, we see that $\rk S(\sigma)=13$. By a computation with the intersection matrix similar to that described above, we see that in fact $\cD_5$ is redundant in the set of generators of $L_\cB$ and that 
\[
\{\cE_1,\dots, \cE_9, S, \cD_1,\cD_2, \cD_3+\cD_4\}
\]
is a minimal set of generators for $L_\cB$, as depicted in Figure~\ref{fig:LBbasis}, so that $\rk L_\cB=\rk S(\sigma)$. 

From the description of the minimal set of generators for $L_\cB$ and $\Pic(X_2)$, we can see that $L_\cB$ is primitively embedded into $\Pic(X_2)$, and therefore $L_\cB=S(\sigma)$. Furthermore, from this explicit description and again using a similar computation as described earlier, we can compute that $S(\sigma)$ has discriminant form $\omega_{2,2}^5$. Since it is hyperbolic and of rank 13, by Theorem~\ref{unicity} we have $S(\sigma)=U\oplus E_8\oplus A_3$.

\subsection{The second K3 surface}
Let $X_4$ be a K3 surface with purely non-symplectic automorphism of order 16 such that $|\disc\Pic (X_4)|=2^4$. Again in this case, Brandhorst \cite{Br} has shown that this K3 surface is unique and that $\Aut(X_4)\cong \ZZ/2\ZZ\times \ZZ/16\ZZ$. Thus there are two purely non-symplectic automorphisms of order 16 on this K3 surface. These fit into the last two lines of Table~\ref{tab:thm5.1} (also \cite[Theorem 5.1]{order16}), as we will see. 

By \cite{Br, order16, dillies16}, this K3 surface has an 
elliptic fibration that is invariant under both automorphisms. 
This elliptic fibration has the Weierstrass equation
\begin{equation}\label{e:elliptX_4}
y^2=x^3+t^3(t^4-1)x=x(x^2+t^3(t^4-1)).
\end{equation}

It is a well known fact that such an elliptic K3 surface has a 2-torsion section given by $t\mapsto (x(t),y(t))=(0,0)$ (see e.g. \cite[Section III.4]{SilvermanTate}). 
The two automorphisms are described in \cite{dillies16}. We will denote them as $\sigma$ and $\sigma'$:
\begin{align*}
\sigma:(x,y,z)&\mapsto (\xi_{16}^6x, \xi_{16}^9y,\xi_{16}^4t)\\
\sigma':(x,y,z)&\mapsto (\xi_{16}^6\frac{y^2-x^3}{x^2}, \xi_{16}^9\frac{x^3y-y^3}{x^3},\xi_{16}^4t).
\end{align*}

As noted in \cite{dillies16}, these two automorphisms commute, and $\sigma^{-1}\sigma'$ is the symplectic involution given by the translation by the section of order two (see \cite{dillies16} for more details).

The elliptic fibration \eqref{e:elliptX_4} has a fiber of type $III^*$ (an extended $E_7$) over $t=0$ and five fibers of type $III$, one of which lies over $t=\infty$. The other four are permuted by both automorphisms. From the Shioda-Tate formula, we know that the group of sections has rank zero, and there are exactly two sections (the zero section and the torsion section) in this fibration (see \cite[Table 1, No. 387]{shimada_arxiv} and \cite{shimada}). 
This gives us the configuration of curves in Figure~\ref{fig:fibX4}. Dotted lines represent sections.
The automorphism $\sigma$ permutes the four $III$ fibers, and the automorphism $\sigma'$ permutes the same four fibers and reflects the whole configuration from top to bottom. We will describe the fixed locus of both automorphisms more completely in the next section. 

\begin{figure}[ht]
\caption{Fibration on $X_4$, the dotted lines represent sections}

\centering\begin{tikzpicture}[xscale=.8,yscale=.6, thick]

	\draw [thick] (-2,-4)--(6.5,-4);

	\draw[scale=0.5, domain=-2.5:4.5,smooth,variable=\x] plot ({\x*\x/12+12},{\x-1});
	\draw[scale=0.5, domain=-4.5:2.5,smooth,variable=\x] plot ({-\x*\x/12+12},{\x-1});

	\draw[scale=0.5, domain=-2.5:4.5,smooth,variable=\x] plot ({\x*\x/12+2},{\x-1});
	\draw[scale=0.5, domain=-4.5:2.5,smooth,variable=\x] plot ({-\x*\x/12+2},{\x-1});

	\draw[scale=0.5, domain=-2.5:4.5,smooth,variable=\x] plot ({\x*\x/12+4},{\x-1});
	\draw[scale=0.5, domain=-4.5:2.5,smooth,variable=\x] plot ({-\x*\x/12+4},{\x-1});

	\draw[scale=0.5, domain=-2.5:4.5,smooth,variable=\x] plot ({\x*\x/12+6},{\x-1});
	\draw[scale=0.5, domain=-4.5:2.5,smooth,variable=\x] plot ({-\x*\x/12+6},{\x-1});

	\draw[scale=0.5, domain=-2.5:4.5,smooth,variable=\x] plot ({\x*\x/12+8},{\x-1});
	\draw[scale=0.5, domain=-4.5:2.5,smooth,variable=\x] plot ({-\x*\x/12+8},{\x-1});

	\node [below] at (-1,-4){$0$};

	\draw (-0.2,-1.4)--(-0.2,0.3);

	\draw (0,-0.9)--(-1.5,-1.6);
	\draw (-1.5,-1.4)--(0,-2.1);
	\draw (0,-1.9)--(-1.3,-2.8);

	\draw (0,-0.5)--(-1.5,-0.5);

	\draw (0,-0.1)--(-1.5,0.6);
	\draw (0,1.2)--(-1.5,0.4);
	\draw (0,0.8)--(-1.3,1.7);

	\draw [dotted] (-1.8,1.5)--(6.7,1.5);
	\draw [dotted] (-1.8,-2.5)--(6.7,-2.5);

	\node [below] at (5.8,-4){$\infty$};

	\node [below] at (-1.5,-0.4){\footnotesize$\cE_2$};
	\node [below] at (0.1,-0.4){\footnotesize$\cE_1$};
	\node [above] at (-1,1.5){\footnotesize$\cE_5$};
	\node [above] at (-1,0.6){\footnotesize$\cE_4$};
	\node          at (-1,0){\footnotesize$\cE_3$};
	\node [below] at (-1,-2.5){\footnotesize$\cE_8$};
	\node [below] at (-1,-1.5){\footnotesize$\cE_7$};
	\node [below] at (-1,-0.6){\footnotesize$\cE_6$};

	\node [below] at (5.9,-1.8){\footnotesize$\cI_2$};
	\node [below] at (3.9,-1.8){\footnotesize$\cI_4$};
	\node [below] at (2.9,-1.8){\footnotesize$\cI_6$};
	\node [below] at (1.9,-1.8){\footnotesize$\cI_8$};
	\node [below] at (0.9,-1.8){\footnotesize$\cI_{10}$};
	\node [below] at (6.1,1.6){\footnotesize$\cI_1$};
	\node [below] at (4.1,1.6){\footnotesize$\cI_3$};
	\node [below] at (3.1,1.6){\footnotesize$\cI_5$};
	\node [below] at (2.1,1.6){\footnotesize$\cI_7$};
	\node [below] at (1.1,1.6){\footnotesize$\cI_{9}$};
	
	\node [above] at (5.5,1.5){\footnotesize$S_1$};
	\node [below] at (4.5,-2.5){\footnotesize$S_2$};
	
\end{tikzpicture}
\label{fig:fibX4}
\end{figure}

\subsubsection*{Alternate proof of uniqueness of $X_4$} As in Section~\ref{sec31}, one can alternatively prove the uniqueness of this K3 surface geometrically. 
Let $\sigma$ be any automorphism of order 16 acting on the K3 surface $X$ as in the last two lines of Table~\ref{tab:thm5.1}.

We consider the jacobian elliptic fibration with a fiber of type $III^*=\tilde{E_7}$ and five fibers of type $III=\tilde{A_1}$, 
which we know is $\sigma^8$--invariant, since $\sigma^8$ acts as the identity on the Picard group. This comes from the fact that the lattice $U\oplus E_7\oplus A_1^{\oplus 5}$ is a sublattice of $\Pic(X)=U(2)\oplus D_4\oplus E_8$.

By \cite{shimada, shimada_arxiv} the fibration admits a $2$-torsion section that we denote  by $S_2$. 
In this case $\sigma^8$ fixes 5 rational curves and a curve of genus 2, by a result of Nikulin (see \cite{nikulin2}).
The five rational curves are the fibers 
$\mathcal{E}_4$, $\mathcal{E}_1$, $\mathcal{E}_7$ and the sections $S_1$ and $S_2$. The curve of genus two is a 2-section of the fibration, since $\sigma^8$ fixes four points on the generic fiber (2 are already contained in $S_1$ and $S_2$). By using \cite[Lemma 5]{order4} we compute $f\cdot \sigma(f)\leq 3$ as in the previous case and by Lemma~\ref{fibr_inv} we are left with the case $f\cdot \sigma(f)\in\{0,2\}$. Again we will show that $f\cdot \sigma(f)=2$ is not possible, which therefore implies that $f\cdot\sigma(f)=0$. 

Assume $f\cdot \sigma(f)=2$. 
This means that each fiber $\sigma(F)$ 
is a 2-section of the fibration induced by $f$. 
We study how $\sigma(F)$ meets a singular fiber of type $III$ in the fibration $f$. 
Observe that $S_1$ and $S_2$ pass through a fixed point for $\sigma^8$ on each of the two components and the curve $C$ goes through the tangency point.
Hence the automorphism  $\sigma^8$ preserves each of the two components of each $III$ fiber. This means that for a fiber $F$ of $f$ the
elliptic curve $\sigma(F)$ which is preserved by $\sigma^8$ must meet only one of the two components of a $III$ fiber. 

Fix one of these $III$ fibers, and let $I_j$ denote the component intersecting $\sigma(F)$.
This means that this rational curve $I_j$ is a 2-section
of the fibration $\sigma(f)$. By Hurwitz formula the $2:1$ morphism to the basis of the fibration $\mathbb{P}^1$ contains two ramification points, and so $I_j$ meets two fibers of the fibration induced by $\sigma(f)$ in one point each, which is a tangency point. We call these two fibers $\sigma(F_1)$ and $\sigma(F_2)$, resp. Since $I_j$ and the two fibers are $\sigma^8$--invariant, then $I_j$ and $\sigma(F_1)$, (resp. $I_j$ and $\sigma(F_2)$) must meet in a fixed point of the involution $\sigma^8$. Observe that the point can not be the tangency point of the $III$ fiber. If this were the case then e.g. $\sigma(F_1)$ would be tangent to $I_j$ and this would imply that $\sigma(F_1)$ meets the $III$ fiber with multiplicity strictly bigger than 2. Recall, however, that $\sigma(F_1)$ is a 2-section of the fibration induced by $f$. So $I_j$, $\sigma(F_1)$ and $\sigma(F_2)$ must all meet in the intersection point with the section (either $S_1$ or $S_2$), but then we have found two distinct fibers of the same elliptic fibration meeting at the same point, which is not possible. 
This shows the following (with the previous notations):
\begin{prop}
The fibration induced by $U\oplus E_7\oplus A_1^{\oplus 5}$ is $\sigma$--invariant.
\end{prop}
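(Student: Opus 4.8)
The plan is to mirror the strategy just used for $X_2$ in Section~\ref{sec31}: reduce the claim $\sigma(f)=f$ to the numerical statement $f\cdot\sigma(f)=0$, and then eliminate the only surviving nonzero possibility $f\cdot\sigma(f)=2$ by analyzing how a displaced fiber meets a $III$ fiber. Since $\sigma^8$ acts as the identity on $\Pic(X)$, the fiber class $f$ of the fibration induced by $U\oplus E_7\oplus A_1^{\oplus 5}$ is automatically $\sigma^8$-invariant; and because $f\cdot\sigma(f)=0$ forces $\sigma(f)=f$ for these primitive isotropic fiber classes, it suffices to prove $f\cdot\sigma(f)=0$. The hypotheses of Lemma~\ref{fibr_inv} are in force here: the section $S_1$ is pointwise fixed by $\sigma^8$, and $\sigma^8$ fixes no curve of genus one (its only fixed curve of positive genus is the genus-$2$ curve $C$), so both conclusions of that Lemma are available.

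First I would bound $f\cdot\sigma(f)$ from above. Using that $C$ is a $2$-section together with \cite[Lemma 5]{order4} gives $f\cdot\sigma(f)\le 3$, exactly as in the $X_2$ case. Then Lemma~\ref{fibr_inv}(2), applied to the $\sigma^8$-invariant curve $\sigma(f)$, forces $f\cdot\sigma(f)$ to be even whenever it is nonzero, so the only surviving possibilities are $f\cdot\sigma(f)\in\{0,2\}$. The entire argument therefore reduces to excluding $f\cdot\sigma(f)=2$.

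The heart of the proof, and the step I expect to be the main obstacle, is this exclusion. Assuming $f\cdot\sigma(f)=2$, a general fiber $\sigma(F)$ is a $2$-section of $|f|$, so I would study how $\sigma(F)$ meets a fixed $III$ fiber. The two decisive observations are that $\sigma^8$ preserves each of the two components of every $III$ fiber (the sections $S_1,S_2$ pass through fixed points on the two components, while $C$ passes through the tangency point), and that $\sigma(F)$ is itself $\sigma^8$-invariant, since $\sigma^8(\sigma(F))=\sigma(\sigma^8(F))=\sigma(F)$. Hence $\sigma(F)$ can meet only one component, say $I_j$, of that $III$ fiber, which makes $I_j$ a $2$-section of the fibration $|\sigma(f)|$. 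Applying Hurwitz's formula to the induced degree-$2$ map $I_j\to\PP^1$ yields two ramification points, that is, two fibers $\sigma(F_1),\sigma(F_2)$ of $|\sigma(f)|$ each tangent to $I_j$ at a single point fixed by $\sigma^8$.

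To close, I would identify the $\sigma^8$-fixed points on $I_j$ as the tangency point of the $III$ fiber and the intersection with the relevant section ($S_1$ or $S_2$). The tangency point is excluded, since a fiber tangent to $I_j$ there would meet the $III$ fiber with multiplicity strictly greater than $2$, contradicting that it is a $2$-section of $|f|$. Thus both $\sigma(F_1)$ and $\sigma(F_2)$ must pass through the single intersection point of $I_j$ with the section, forcing two distinct fibers of $|\sigma(f)|$ to meet — which is impossible. This contradiction eliminates $f\cdot\sigma(f)=2$, leaves $f\cdot\sigma(f)=0$, and proves the $\sigma$-invariance of the fibration. The delicate point throughout is the bookkeeping of which components and which fixed points of $\sigma^8$ are involved; once that is pinned down, the contradiction is forced by the elementary fact that distinct fibers of an elliptic fibration are disjoint.
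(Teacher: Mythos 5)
Your proposal is correct and follows essentially the same route as the paper: the same reduction to $f\cdot\sigma(f)\in\{0,2\}$ via \cite[Lemma 5]{order4} and Lemma~\ref{fibr_inv}, and the same exclusion of $f\cdot\sigma(f)=2$ by showing a displaced fiber meets only one component $I_j$ of a $III$ fiber, applying Hurwitz to the resulting $2:1$ map $I_j\to\PP^1$, ruling out the tangency point, and deriving the contradiction of two distinct fibers of $|\sigma(f)|$ meeting at the section point. The bookkeeping of the $\sigma^8$-fixed curves (the genus-2 curve as a $2$-section through the tangency points, the sections $S_1,S_2$ through the component fixed points) also matches the paper's argument.
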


We have now to compute the Weierstrass equation $y^2=x^3+A(t)x+B(t), \ t\in \PP^1$, to conclude the proof of the unicity of $X_4$. Observe that $\sigma^8$ acts as the identity on the section,
so it acts as the identity on the basis of the fibration, but the automorphism cannot act with order eight on the basis of the fibration.

In fact the fiber of type $III^*$ is necessarily on a fixed point on $\PP^1$ for the action, say over $t=0$, and one of the five $III$ fibers is over the other fixed point, say $t=\infty$. We still have four fibers $III$ which must be permuted by $\sigma$. So $\sigma$ acts at most with order four on the basis of the fibration. 

If $\sigma$ acted trivially on the basis of the fibration then each smooth elliptic fiber would admit an automorphism of order $16$ (not a translation, since $\sigma$ acts purely non--symplectically) which is not possible. A similar argument applies if $\sigma^2$ is the identity on the basis of the fibration. So $\sigma$ acts with order four on the basis of the fibration. 

After imposing the condition of having a $III^*$ fiber over $t=0$ and a $III$ fiber over $t=\infty$, by \cite[Table IV.3.1]{Mir}, we see that in the Weierstrass form of the fibration we can write
\[
A(t)=t^3 P(t),\,\, B(t)=t^5 Q(t)
\]
with $\deg P(t)=4$ and $\deg Q(t)=5$.

We are left with four fibers more of type $III$, this implies that $Q(t)$ must contain a factor of degree four to the power two. This is clearly impossible, so that $Q(t)=0$, which implies $B(t)=0$. On the other hand $\deg P(t)=4$ and the fibers over its zeros are the four fibers $III$. 
We may now assume that the action on the basis of the fibration is $t\mapsto i t$. After applying this transformation to $P(t)$ we want to get $i^l P(t)$ for some power $l$ of $i$. 

Putting all these facts together we can write:
$$
P(t)=a t^4+ b 
$$
where $a$ and $b$ are both non--zero constants. We can now write our elliptic fibration as 
$$
y^2=x^3+t^3(a t^4+ b)x.
$$
We use the automorphisms of $\PP^1$ and the transformation $(x,y)\mapsto (\lambda^2 x, \lambda^3 y)$ to get rid of the constants and to get finally the equation:
$$
y^2=x^3+t^3(t^4-1)x.
$$
This concludes the proof of the unicity of $X_4$.

\subsubsection{Invariant lattices}
In order to describe the invariant lattices, we first label the curves in the configuration as in Figure~\ref{fig:fibX4}. That is, in the fiber over 0 (the $III^*$), label the curves $\cE_1,\cE_2,\dots, \cE_8$, with $\cE_1$ being the central curve (where the three branches meet). Let $\cE_2$ be the single curve intersecting $\cE_1$; let $\cE_3$, $\cE_4$ and $\cE_5$ be the upper branch with $\cE_3$ intersecting $\cE_1$, and finally let $\cE_6$, $\cE_7$ and $\cE_8$ the lower branch with $\cE_6$ intersecting $\cE_1$. Label the two section $S_1$ and $S_2$, so that $S_1$ (the upper section) intersects $\cE_5$ and $S_2$ (the lower section) intersects $\cE_8$. Finally label the curves in the type $III$ fibers as $\cI_1,\cI_2,\dots, \cI_{10}$ so that $\cI_1\cup \cI_2$ lies over $\infty$, and the others pairs are labelled consecutively, i.e. $\cI_3\cup \cI_4$ is a fiber, $\cI_5\cup \cI_6$ is a fiber, etc. Furthermore, assume $S_1$ intersects $\cI_1,\cI_3, \cI_5, \cI_7,\cI_9$ and $S_2$ intersects $\cI_2,\cI_4,\cI_6,\cI_8,\cI_{10}$. 
As with the previous example, this configuration of curves generates $\Pic(X_4)$, as we will see. Since $\rk\Pic(X_4)=14$, six of the 20 curves must be redundant. In order to more easily describe the primitive embeddings of the invariant lattices into $\Pic(X_4)$, we will describe two different bases for $\Pic(X_4)$ chosen from these curves. As before, we will also use these curves to describe a minimal set of generators for each invariant lattice. We will first compute the invariant lattice for $\sigma$ and then for $\sigma'$.  

\subsubsection{Invariant lattice for $\sigma$:}\label{sss:invariant}
As shown in \cite{dillies16}, the automorphism $\sigma$ leaves invariant the curves in the fiber over 0 (fixing $\cE_1$ pointwisely), the fiber over $\infty$ and the two sections. Furthermore, $\sigma$ permutes the curves $\cI_4,\cI_6,\cI_8$ and $\cI_{10}$ and $\cI_3,\cI_5,\cI_7$ and $\cI_9$. From the description of the action of $\sigma$, we can see that $\sigma$ fixes 10 isolated points (3 in the fiber above $\infty$ and 7 in the fiber above 0) and 1 rational curve ($\cE_1$). This is the second line in Table~\ref{tab:thm5.1}. The invariant lattice $S(\sigma)$ was computed in \cite[Example 4.5]{CP}. We will give an alternate description here. 
 
The configuration of curves in Figure~\ref{fig:fibX4} can be represented by the diagram in Figure~\ref{fig:graphX_4}. The vertices depicted as double circles represent the sections, and the weighted edges labeled with 2 represent the tangent intersection of the two curves in the $III$ fibers. 

\begin{figure}[ht]
	\caption{Configuration of curves on $X_4$. The vertices with two circles represent the sections.}
	\centering
\begin{tikzpicture}[xscale=.6,yscale=.5, thick]
	
\begin{scope}[every node/.style={circle, draw, fill=black!50, inner sep=0pt, minimum width=4pt}]
	\draw (-1,0) node{} -- (0,0) node[name=e1]{}
	-- (0,1) node{}
	-- (1,2) node{}
	-- (2.5,2) node[name=e5]{};

	\draw (e1) -- (0,-1) node{}
	-- (1,-2) node{}
	-- (2.5,-2) node[name=e8]{};

	\draw	(e5) -- (4,2) node[name=s1, double, draw, inner sep=0pt, minimum width=4pt]{};
	\draw 	(e8) -- (4,-2) node[name=s2, double, draw, inner sep=0pt, minimum width=4pt]{};

	\draw (s1) -- (2,0.6) node{}
	-- (2,-0.6) node{}
	-- (s2);
	
	\draw (s1) -- (3,0.6) node{}
	-- (3,-0.6) node{}
	-- (s2);
	
	\draw (s1) -- (4,0.6) node{}
	-- (4,-0.6) node{}
	-- (s2);
	
	\draw (s1) -- (5,0.6) node{}
	-- (5,-0.6) node{}
	-- (s2);
	
	\draw (s1) -- (6,0.6) node{}
	-- (6,-0.6) node{}
	-- (s2);

\end{scope}
	
	\node[left] at (2,0) {2};
	\node[left] at (3,0) {2};
	\node[left] at (4,0) {2};
	\node[left] at (5,0) {2};
	\node[left] at (6,0) {2};
	
\end{tikzpicture}
	\label{fig:graphX_4}
\end{figure}
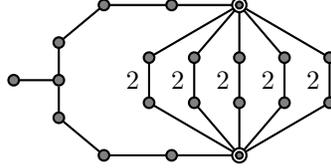

We need to find a suitable minimal set of generators for $\Pic (X_4)$. Again as before, we can study the matrix obtained from this graph, and one can check (via computer computation) that $\Pic (X_4)$ is generated by $\set{\cE_1,\dots, \cE_8, S_1,S_2, \cI_1, \cI_4, \cI_6, \cI_{8}}$. This is depicted in Figure~\ref{fig:PicGenX_4}.

\begin{figure}[ht]
	\caption{Configuration of curves on $X_4$. The filled vertices form a minimal set of generators of $\Pic(X_4)$}

	\centering
	\begin{tikzpicture}[xscale=.6,yscale=.5, thick]
	
	\begin{scope}[every node/.style={circle, draw, fill=black!50, inner sep=0pt, minimum width=4pt}]
	\draw (-1,0) node{} -- (0,0) node[name=e1]{}
	-- (0,1) node{}
	-- (1,2) node{}
	-- (2.5,2) node[name=e5]{}
	-- (4,2) node[name=s1]{};
	
	\draw (e1) -- (0,-1) node{}
	-- (1,-2) node{}
	-- (2.5,-2) node[name=e8]{}
	-- (4,-2) node[name=s2]{};

\end{scope}	

\begin{scope}[every node/.style={circle, fill=black!50, draw, inner sep=0pt, minimum width=4pt}]
	
	\draw (s1) -- (6,0.6) node[name=i1]{};
	
	\draw (3,-0.6) node[name=i4]{}
	-- (s2);
	
	\draw (4,-0.6) node[name=i6]{}
	-- (s2);
	
	\draw (5,-0.6) node[name=i8]{}
	-- (s2);
	
\end{scope}

	\node[circle, draw, inner sep=0pt, minimum width=4pt, name=i2] at (6,-0.6) {};
	\node[circle, draw, inner sep=0pt, minimum width=4pt, name=i3] at (3,0.6) {};
	\node[circle, draw, inner sep=0pt, minimum width=4pt, name=i5] at (4,0.6) {};
	\node[circle, draw, inner sep=0pt, minimum width=4pt, name=i7] at (5,0.6) {};
	\node[circle, draw, inner sep=0pt, minimum width=4pt, name=i9] at (2,0.6) {};
	\node[circle, draw, inner sep=0pt, minimum width=4pt, name=i10] at (2,-0.6) {};

	\draw[dotted] (i1) -- (i2) -- (s2);
	\draw[dotted] (s1) -- (i9) -- (i10) -- (s2);
	\draw[dotted] (s1) -- (i3) -- (i4);
	\draw[dotted] (s1) -- (i5) -- (i6);
	\draw[dotted] (s1) -- (i7) -- (i8);

	\end{tikzpicture}
	\label{fig:PicGenX_4}
\end{figure}
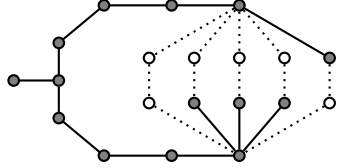

In order to find $S(\sigma)$, we construct a lattice $L_\cB$ generated by 
\[
\cB=\{\cE_1\dots, \cE_8, S_1,S_2, \cI_1,\cI_2, \cI_4+\cI_6+\cI_8+\cI_{10}, \cI_3+\cI_5+\cI_7+\cI_9\}
\] 
and, from the description of the action of $\sigma$ given above, we can see it is contained in $S(\sigma)$. The graph for this set is depicted in Figure~\ref{fig:X4invariantGraph}. The two square vertices represent the sums of $\cI_4+\cI_6+\cI_8+\cI_{10}$ and $\cI_3+\cI_5+\cI_7+\cI_9$, resp. and each has a self-intersection number $-8$. The edge weighted with 8 represents the intersection $(\cI_4+\cI_6+\cI_8+\cI_{10})\cdot (\cI_3+\cI_5+\cI_7+\cI_9)=8$.

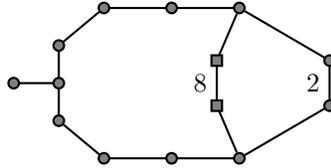
\begin{figure}[ht]
	\caption{Configuration of curves on $X_4$ generating $L_\cB$.}
	\centering
	\begin{tikzpicture}[xscale=.6,yscale=.5, thick]
	
	\begin{scope}[every node/.style={circle, draw, fill=black!50, inner sep=0pt, minimum width=4pt}]
	\draw (-1,0) node{} -- (0,0) node[name=e1]{}
	-- (0,1) node{}
	-- (1,2) node{}
	-- (2.5,2) node[name=e5]{}
	-- (4,2) node[name=s1]{};
	
	\draw (e1) -- (0,-1) node{}
	-- (1,-2) node{}
	-- (2.5,-2) node[name=e8]{}
	--(4,-2) node[name=s2]{};
	
	\draw (s1) -- (6,0.6) node{}
	-- (6,-0.6) node{}
	-- (s2);
	
	\end{scope}
	
	\draw (s1) -- (3.5,0.6) node[style=rectangle, draw, inner sep=0pt, minimum width=4pt,minimum height=4pt, fill=black!50]{}
	-- (3.5,-0.6) node[style=rectangle, draw, inner sep=0pt, minimum width=4pt,minimum height=4pt, fill=black!50]{}
	-- (s2);

	\node[left] at (3.5,0) {8};
	\node[left] at (6,0) {2};

	\end{tikzpicture}
	\label{fig:X4invariantGraph}
\end{figure}

From Table~\ref{tab:thm5.1}, we know that $\rk S(\sigma)=11$. Again using a similar computation to those described previously, we see that the last three generators in the set $\cB$ are redundant, and that $L_\cB$ is generated by $\{\cE_1,\dots, \cE_8, S_1,S_2, \cI_1\}$ as depicted in Figure~\ref{fig:X4invariantBasis}.
In particular $\rk L_\cB=\rk S(\sigma)$. 

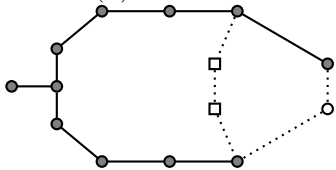
\begin{figure}[ht]
	\caption{Configuration of curves on $X_4$. The filled vertices represent a minimal set of generators of $S(\sigma)$.}

	\centering
	\begin{tikzpicture}[xscale=.6,yscale=.5, thick]
	
	\begin{scope}[every node/.style={circle, draw, fill=black!50, inner sep=0pt, minimum width=4pt}]
	\draw (-1,0) node{} -- (0,0) node[name=e1]{}
	-- (0,1) node{}
	-- (1,2) node{}
	-- (2.5,2) node[name=e5]{}
	-- (4,2) node[name=s1]{}
	-- (6,0.6) node{};
	
	\draw (e1) -- (0,-1) node{}
	-- (1,-2) node{}
	-- (2.5,-2) node[name=e8]{}
	-- (4,-2) node[name=s2]{};

	\end{scope}

	\node[circle, draw, inner sep=0pt, minimum width=4pt, name=i2] at (6,-0.6) {};
	\node[style=rectangle, draw, inner sep=0pt, minimum width=4pt,minimum height=4pt, name=iodd] at (3.5,0.6) {};
	\node[style=rectangle, draw, inner sep=0pt, minimum width=4pt,minimum height=4pt, name=ieven] at (3.5,-0.6) {};

	\draw[dotted] (i1) -- (i2) -- (s2);
	\draw[dotted] (s1) -- (iodd) -- (ieven) -- (s2);

	\end{tikzpicture}
	\label{fig:X4invariantBasis}
\end{figure}

The embedding $L_\cB\hookrightarrow \Pic (X_4)$ is clearly primitive; 
in fact we have shown that the set $\{\cE_1,\ldots,\cE_8, S_1,S_2, \cI_1,\cI_4,\cI_6,\cI_8\}$ 
is a set of generators of $\Pic(X_4)$ and $\{\cE_1,\ldots, \cE_8, S_1,S_2, \cI_1\}$
 is a set of generators of $L_\cB$. 
Since $\rk L_\cB=\rk S(\sigma)$, these two lattices must be equal. With this explicit description, we can compute the discriminant form of $S(\sigma)$ is $\omega_{2,3}^{-5}$, and therefore, by Theorem~\ref{unicity}, the lattice is $S(\sigma)\cong T_{2,5,6}$, which agrees with \cite{CP}. In fact we can recognize the $T_{2,5,6}$ in Figure~\ref{fig:X4invariantBasis}.

\subsubsection{Invariant lattice for $\sigma'$} 

As shown in \cite{dillies16}, the automorphism $\sigma'$ only fixes the curves $\cE_1$, $\cE_2$. It permutes transitively each of the following pairs of curves: $(\cE_3, \cE_6)$, $(\cE_4, \cE_7)$, $(\cE_5, \cE_8)$, $(S_1,S_2)$, $(\cI_1,\cI_2)$, and finally permutes the other curves in orbit of size four, namely $\cI_3, \cI_6, \cI_7, \cI_{10}$ are permuted cyclically and $\cI_4, \cI_5, \cI_8, \cI_9$ are permuted cyclically. This action is depicted in Figure~\ref{fig:X4sigma2action}. One can check that $\sigma'$ fixes only four isolated points, so this corresponds to the third line in Table~\ref{tab:thm5.1}.  

\begin{figure}[ht]
	\caption{Action of $\sigma'$ on the configuration of curves.}
\centering
\begin{tikzpicture}[xscale=.6,yscale=.5, thick]
	
\begin{scope}[every node/.style={circle, draw, fill=black!50, inner sep=0pt, minimum width=4pt}]
	\draw (-1,0) node{} -- (0,0) node[name=e1]{}
	-- (0,1) node{}
	-- (1,2) node{}
	-- (2.5,2) node[name=e5]{};

	\draw (e1) -- (0,-1) node{}
	-- (1,-2) node{}
	-- (2.5,-2) node[name=e8]{};

	\draw	(e5) -- (4,2) node[name=s1, draw, inner sep=0pt, minimum width=4pt]{};
	\draw 	(e8) -- (4,-2) node[name=s2, draw, inner sep=0pt, minimum width=4pt]{};

	\draw (s1) -- (2,0.6) node{}
	-- (2,-0.6) node{}
	-- (s2);
	
	\draw (s1) -- (3,0.6) node{}
	-- (3,-0.6) node{}
	-- (s2);
	
	\draw (s1) -- (4,0.6) node{}
	-- (4,-0.6) node{}
	-- (s2);
	
	\draw (s1) -- (5,0.6) node{}
	-- (5,-0.6) node{}
	-- (s2);
	
	\draw (s1) -- (6,0.6) node{}
	-- (6,-0.6) node{}
	-- (s2);
	
	\end{scope}

		\draw[<->,>=stealth,bend right] (-1.5,1) to (-1.5,-1);

	\draw[->,>=stealth] (2.1,0.5) -- (2.9,-0.5);
	\draw[->,>=stealth] (3.1,-0.5) -- (3.9,0.5);
	\draw[->,>=stealth] (4,0.5) -- (5,-0.5);
	
	\draw[->,>=stealth] (2.1,-0.5) -- (2.9,0.5);
	\draw[->,>=stealth] (3.1,0.5) -- (3.9,-0.5);
	\draw[->,>=stealth] (4,-0.5) -- (5,0.5);
	
	\end{tikzpicture}
	\label{fig:X4sigma2action}
\end{figure}
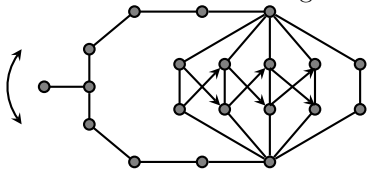

From the previous minimal set of generators for $\Pic (X_4)$, it would be very difficult to exhibit a primitive embedding, so in order to describe this invariant lattice, we need to find a new minimal set of generators for $\Pic(X_4)$ better suited to this automorphism. That new minimal set of generators is given by the 14 curves $\{\cE_1,\dots, \cE_8, S_1,S_2, \cI_3, \cI_7, \cI_6, \cI_{10}\}$, as one can see from the corresponding matrix. This minimal set of generators is represented in Figure~\ref{fig:PicGenX_42}. Notice that we have chosen a minimal set of generators compatible with the orbits of $\sigma'$. 

\begin{figure}[ht]
	\caption{Configuration of curves on $X_4$. The filled vertices form a minimal set of generators of $\Pic(X_4)$.}
	\centering
\begin{tikzpicture}[xscale=.6,yscale=.5, thick]
	
\begin{scope}[every node/.style={circle, draw, fill=black!50, inner sep=0pt, minimum width=4pt}]
	\draw (-1,0) node{} -- (0,0) node[name=e1]{}
	-- (0,1) node{}
	-- (1,2) node{}
	-- (2.5,2) node[name=e5]{}
	-- (4,2) node[name=s1]{};
	
	\draw (e1) -- (0,-1) node{}
	-- (1,-2) node{}
	-- (2.5,-2) node[name=e8]{}
	-- (4,-2) node[name=s2]{};

\end{scope}	
	
\begin{scope}[every node/.style={circle, fill=black!50, draw, inner sep=0pt, minimum width=4pt}]
	
	\draw (s1) -- (5,0.6) node[name=i3]{};
	
	\draw (s1) -- (3,0.6) node[name=i7]{};
	
	\draw (2,-0.6) node[name=i10]{}
	-- (s2);
	
	\draw (4,-0.6) node[name=i6]{}
	-- (s2);

\end{scope}
	
	\node[circle, draw, inner sep=0pt, minimum width=4pt, name=i2] at (6,-0.6) {};
	\node[circle, draw, inner sep=0pt, minimum width=4pt, name=i1] at (6,0.6) {};
	\node[circle, draw, inner sep=0pt, minimum width=4pt, name=i8] at (3,-0.6) {};
	\node[circle, draw, inner sep=0pt, minimum width=4pt, name=i5] at (4,0.6) {};
	\node[circle, draw, inner sep=0pt, minimum width=4pt, name=i4] at (5,-0.6) {};
	\node[circle, draw, inner sep=0pt, minimum width=4pt, name=i9] at (2,0.6) {};
	
	\draw[dotted] (s1) -- (i1) -- (i2) -- (s2);
	\draw[dotted] (s1) -- (i9) -- (i10);
	\draw[dotted] (i3) -- (i4) -- (s2);
	\draw[dotted] (s1) -- (i5) -- (i6);
	\draw[dotted] (i7) -- (i8) -- (s2);

\end{tikzpicture}
	\label{fig:PicGenX_42}
\end{figure}
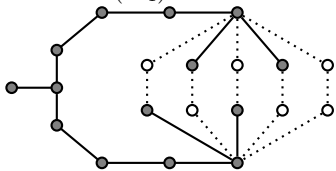

In order to compute the invariant lattice, we first consider the lattice generated by these orbits, namely the lattice $L_{\cB}$ generated by 
\[
\cB=\{\cE_1,\cE_2, \cE_3+\cE_6, \cE_4+\cE_7, \cE_5+\cE_8, S_1+S_2, \cI_1+\cI_2, \cI_3+ \cI_6+ \cI_7+ \cI_{10}, \cI_4+ \cI_5+ \cI_8+ \cI_9\}
\] 
giving us the configuration in Figure~\ref{fig:X4invariantGraph2}. The pentagon represents $\cI_1+\cI_2$ with self-intersection 0, the squares represent the other sums of two curves and each has self-intersection $-4$, and the triangles represent the sums of four curves and each has self-intersection $-8$.

\begin{figure}[ht]
	\caption{Configuration of generators of $L_\cB$.}
	\centering
\begin{tikzpicture}[xscale=.6,yscale=.5, thick]
	
	
\begin{scope}[every node/.style={circle, fill=black!50, draw, inner sep=0pt, minimum width=4pt}]
	
	\draw (-1,0) node{} -- (0,0) node[name=e1]{};
	
\end{scope}
	
\begin{scope}[every node/.style={style=rectangle, draw, inner sep=0pt, minimum width=4pt,minimum height=4pt, fill=black!50}]
	\draw (e1)
	-- (0,1) node{}
	-- (1,2) node{}
	-- (2.5,2) node[name=e5]{}
	-- (4,2) node[name=s1]{};
	
\end{scope}	
	
\begin{scope}[every node/.style={regular polygon,regular polygon sides=3, fill=black!50, draw, inner sep=0pt, minimum width=5pt}]
	
	\draw (s1) -- (3,0.6) node[name=i3]{};
	
	\draw (s1) -- (4,0.6) node[name=i5]{};
	
\end{scope}
	
	\draw (i3) -- (i5);
	\draw (s1) -- (6,0.6) node[regular polygon,regular polygon sides=5, fill=black!50, draw, inner sep=0pt, minimum width=5pt]{};
	
	\node[right] at (0,0.5) {2};
	\node[above] at (0.4,1.5) {2};
	\node[above] at (1.75,2) {2};
	\node[above] at (3.25,2) {2};
	\node[above] at (5,1.3) {2};
	\node[right] at (4,1.1) {4};
	\node[above] at (3.25,1) {4};
	\node[below] at (3.5,0.5) {8};
	
	\node[regular polygon,regular polygon sides=4, fill=black!50, draw, inner sep=0pt, minimum width=5pt ] at (7,2) {};
	\node[right] at (7,2) {=self-intersection $-4$};

	\node[regular polygon,regular polygon sides=3, fill=black!50, draw, inner sep=0pt, minimum width=5pt] at (7,1) {};
	\node[right] at (7,1) {=self-intersection $-8$};

	\node[regular polygon,regular polygon sides=5, fill=black!50, draw, inner sep=0pt, minimum width=5pt] at (7,0) {};
	\node[right] at (7,0) {=self-intersection $0$};

\end{tikzpicture}
	\label{fig:X4invariantGraph2}
\end{figure}
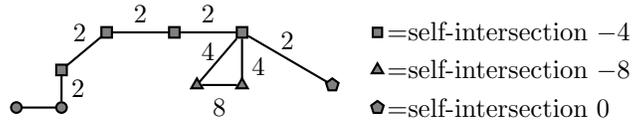 

Again after setting up the corresponding matrix, we can see that in fact the lattice $L_{\cB}$ has a minimal set of generators $\{\cE_1,\cE_2, \cE_3+\cE_6, \cE_4+\cE_7, \cE_5+\cE_8, S_1+S_2, \cI_3+ \cI_6+ \cI_7+ \cI_{10}\}$, as represented in Figure~\ref{fig:X_4invariantBasis2}. The minimal set of generators is represented by filled nodes, and the redundant sums are represented by hollow nodes.

\begin{figure}[ht]
	\caption{Configuration of generators of $L_{\cB}$. The filled vertices represent a minimal set of generators of $S(\sigma')$.}
	\centering
\begin{tikzpicture}[xscale=.6,yscale=.5, thick]
	
	
\begin{scope}[every node/.style={circle, fill=black!50, draw, inner sep=0pt, minimum width=4pt}]
	
	\draw (-1,0) node{} -- (0,0) node[name=e1]{};
	
\end{scope}
	
\begin{scope}[every node/.style={style=rectangle, draw, inner sep=0pt, minimum width=4pt,minimum height=4pt, fill=black!50}]
	\draw (e1)
	-- (0,1) node{}
	-- (1,2) node{}
	-- (2.5,2) node[name=e5]{}
	-- (4,2) node[name=s1]{};

\end{scope}	
	
\begin{scope}[every node/.style={regular polygon,regular polygon sides=3, fill=black!50, draw, inner sep=0pt, minimum width=5pt}]
	
	\draw (s1) -- (3,0.6) node[name=i3]{};
	
\end{scope}
	
	\node[regular polygon,regular polygon sides=3,  draw, inner sep=0pt, minimum width=5pt, name=i4] at (4,0.6) {};
	
	\node[regular polygon,regular polygon sides=5, draw, inner sep=0pt, minimum width=4pt, name=i1] at (6,0.6) {};
	
	\draw[dotted] (s1) -- (i1);
	\draw[dotted] (s1) -- (i4) -- (i3);
	
	\node[right] at (0,0.5) {2};
	\node[above] at (0.4,1.5) {2};
	\node[above] at (1.75,2) {2};
	\node[above] at (3.25,2) {2};
	\node[above] at (3.25,1) {4};
	
\end{tikzpicture}
	\label{fig:X_4invariantBasis2}
\end{figure} 

The embedding $L_{\cB}\hookrightarrow \Pic (X_4)$ is obviously primitive, so we know that $L_{\cB}=S(\sigma')$. We can compute the discriminant form is $u+v+\omega_{2,3}^{-1}$, which determines the lattice $U(2)\oplus D_4\oplus\langle -8\rangle$ by Theorem~\ref{unicity}.

\section{K3 surfaces from Table~\ref{tab:thm3.1}}\label{s:table2}

Finally, we consider a K3 surface $X_6$ with purely non-symplectic automorphism of order 16 such that $|\disc\Pic (X_6)|=2^6$. Such a surface must fall into one of two cases given by the two lines in Table~\ref{tab:thm3.1}. In \cite{Br}, Brandhorst has shown that such a K3 surface is unique
and he observes in \cite[Remark 7.3]{Br} 
that $\Aut(X_6)$ has infinite order. 
In fact this can be seen directly: we are going to describe an elliptic fibration on $X_6$ and, 
by using Shioda-Tate formula, we will see that it admits sections of infinite order.
Considering translation by these sections on each fiber, one gets symplectic automorphisms of infinite order on $X_6$. 

The K3 surface $X_6$ has an elliptic fibration given by the Weierstrass equation:
\begin{equation}
y^2=x^3+x+t^8. 
\label{fibration}\end{equation}

\begin{subsection}{The elliptic fibration}
We study here the properties of the elliptic fibration \eqref{fibration}.
The discriminant of the fibration is 
$\Delta(t)=4+27t^{16}$; 
it has $16$ simple zeros, so that the fibration has $16$ fibers of type $I_1$,
a fiber of type $IV^*$ (an extended $E_6$) over $t=\infty$ and a smooth fiber $\cC$ over $t=0$.

The fiber $\cC$ has equation
\[y^2=x^3+x=x(x-i)(x+i)\]
or in homogeneous coordinates
$zy^2=x^3+z^2x$
which is the equation of an elliptic curve that admits a complex multiplication of order four
$(x:y:z)\mapsto (-x: i y: z)$.

The fibration is jacobian so this means that it admits a section. 
We take the zero section $S_0$ as given by $z=0$, i.e.
\[
S_0:t\mapsto ((0:1:0),t). 
\]
Observe that the Picard rank of the fibration is 14. We will now express the Picard lattice in terms of root lattices.  

\begin{lem}\label{lem:PciY}
 The Picard group of $X_6$ is $\Pic (X_6)\cong U\oplus D_4^3$. 
\end{lem}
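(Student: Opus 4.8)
The plan is to compute the Picard lattice $\Pic(X_6)$ directly from the elliptic fibration \eqref{fibration}, using the Shioda--Tate formula together with Nikulin's uniqueness theorem (Theorem~\ref{unicity}). First I would record the reducible fibers: the discriminant $\Delta(t)=4+27t^{16}$ gives sixteen $I_1$ fibers contributing nothing to the Picard lattice, a single $IV^*$ fiber over $t=\infty$ contributing an $E_6$ root lattice, and a smooth fiber $\cC$ over $t=0$. Since $\rk\Pic(X_6)=14$, the Shioda--Tate formula
\[
\rk\Pic(X_6)=2+\sum_v(m_v-1)+\rk\MW
\]
(where the sum is over reducible fibers and $m_v$ is the number of components) gives $14=2+6+\rk\MW$, so the Mordell--Weil group has rank $6$. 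This already shows $\Aut(X_6)$ is infinite, as asserted in the text.

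The next step is to identify the full lattice. The trivial lattice (generated by the zero section, a general fiber, and the non-identity components of the $IV^*$ fiber) is $U\oplus E_6$ of rank $8$, and the remaining rank comes from the six independent sections. Rather than computing the Mordell--Weil lattice explicitly, I would instead compute the discriminant form of $\Pic(X_6)$ and invoke Theorem~\ref{unicity}. The candidate lattice $U\oplus D_4^3$ is even, has signature $(1,13)$, rank $14$, and one computes its discriminant form to be $u\oplus v\oplus v$ (each $D_4$ contributes a copy of $v$, up to the standard identifications), a form on $(\ZZ/2\ZZ)^6$ of order $2^6$, matching the stated discriminant $|\disc\Pic(X_6)|=2^6$. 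Since $t_++t_-=14\geq 2+l(A_q)=2+6=8$, $t_+,t_-\geq 1$, and the signature condition $t_+-t_-=-12\equiv\sign q\pmod 8$ holds, Theorem~\ref{unicity} guarantees $U\oplus D_4^3$ is the unique even lattice with these invariants.

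It therefore suffices to show that $\Pic(X_6)$ itself has discriminant form $u\oplus v\oplus v$ and the correct signature. The signature is immediate: $X_6$ is a K3 surface so $\Pic(X_6)$ is hyperbolic of signature $(1,r-1)=(1,13)$. For the discriminant form, I would exploit the order-$16$ automorphism $\gamma$: since $\gamma^8$ acts trivially on $\Pic(X_6)$ but $\gamma$ itself acts with order dividing $8$ on the base and has the complex multiplication of order four on $\cC$, one can use the invariant lattice structure together with the computation $|\disc\Pic(X_6)|=2^6$ (known from \cite{Br}) to pin down $q$. Concretely, the cleanest route is the same computational strategy used in Section~\ref{s:table1}: exhibit an explicit set of generating divisors (the fiber components, zero section, and a choice of six sections realizing the Mordell--Weil generators), form the intersection matrix, and compute the discriminant form via the \verb|disc| routine, obtaining $u\oplus v\oplus v$.

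The main obstacle is the explicit determination of the Mordell--Weil generators and their intersection pairings, since unlike the earlier cases the fibration here has positive Mordell--Weil rank and the sections of infinite order are not visible from the fiber configuration alone. One practical workaround is to avoid the sections entirely by arguing at the level of discriminant forms and embeddings: knowing from \cite{Br} that $|\disc\Pic(X_6)|=2^6$ and that $\Pic(X_6)$ is $2$-elementary (which follows because $\gamma^8$ is an involution acting trivially on $\Pic(X_6)$, forcing the discriminant group to be $2$-elementary by Nikulin's theory of involutions), one has $A_{\Pic(X_6)}\cong(\ZZ/2\ZZ)^6$; the signature and $2$-elementary constraints then leave $u\oplus v\oplus v$ as essentially the only possibility, and Theorem~\ref{unicity} finishes the identification with $U\oplus D_4^3$.
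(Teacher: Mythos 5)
Your proposal has a genuine gap at its final and crucial step. You claim that knowing $\Pic(X_6)$ is $2$-elementary of length $6$ with signature $(1,13)$ "leaves essentially only one possibility" for the discriminant form, so that Theorem~\ref{unicity} pins down the lattice. This is false: by Nikulin's classification, even hyperbolic $2$-elementary lattices are determined by the triple $(r,a,\delta)$, where $\delta\in\{0,1\}$ records whether the discriminant form takes values in $\ZZ/2\ZZ$ ($\delta=0$) or genuinely in $\frac12\ZZ/2\ZZ$ ($\delta=1$). For $(r,a)=(14,6)$ \emph{both} values of $\delta$ occur: $U\oplus D_4^{\oplus 3}$ has $\delta=0$, while for instance $U\oplus E_7\oplus A_1^{\oplus 5}$ has rank $14$, length $6$, signature $(1,13)$, and $\delta=1$; their discriminant forms have the same signature mod $8$, so neither your constraints nor Theorem~\ref{unicity} can distinguish them. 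The entire content of the paper's proof is precisely resolving this dichotomy: the paper invokes the geometric criterion that $\delta=0$ if and only if the class of the fixed locus of the involution $\sigma^8$ is divisible by $2$ in the N\'eron--Severi group, and verifies it using the $IV^*$ fiber relation $\cC=3\cE_0+2(\cE_1+\cE_2+\cE_3)+\cE_4+\cE_5+\cE_6$, so that the sum of fixed curves $F=\cC+\cE_0+\cE_4+\cE_5+\cE_6=4\cE_0+2(\cE_1+\cdots+\cE_6)$ is $2$-divisible. Your proposal never addresses this step, and your alternative route (writing down the intersection matrix including six Mordell--Weil generators) is, by your own admission, not carried out.

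There is also a computational error that makes your own consistency check fail: the discriminant form of $U\oplus D_4^{\oplus 3}$ is $v\oplus v\oplus v\cong u\oplus u\oplus v$ (each $D_4$ contributes $v$, and the standard relation is $u\oplus u\cong v\oplus v$), not $u\oplus v\oplus v$. Indeed $\sign(u\oplus v\oplus v)\equiv 0\pmod 8$, whereas $t_+-t_-=-12\equiv 4\pmod 8$, so the form you wrote down violates the very signature condition you claim to verify; the correct form $v^{\oplus 3}$ has signature $12\equiv 4\pmod 8$. This slip is fixable, but the missing $\delta$-invariant argument is not a detail --- it is the proof.
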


\begin{proof}
Recall from \cite{order16} that any K3 surface of this type admits an elliptic fibration with a $IV^*$ fiber over $\infty$ and a smooth curve over $0$. Furthermore, $\Pic(X_6)=S(\sigma^8)$, and $\sigma^8$ is a non-symplectic involution. This involution fixes the smooth fiber (with genus 1) over 0, and four of the curves making up the singular fiber over $\infty$. Looking at Nikulin's classification of non--symplectic involutions on K3 surfaces, $\Pic(X_6)$ has rank 14 and length 6. There are two lattices with these invariants, one with $\delta=0$ and one with $\delta=1$. We will show that $X_6$ has invariant $\delta=0$. The statement will then follow. 

Recall that $\delta=0$ is equivalent to say that the sum of the curves of the fixed locus of $\sigma^8$ is divisible by $2$ in the N\'eron--Severi group (see e.g. \cite[
Section 4.1]{ABS}).

Let $\cC$ denote the smooth elliptic curve over 0. We number the curves making up the $IV^*$ fiber over $\infty$: let $\cE_0$ denote the central curve, $\cE_1,\cE_2, \cE_3$ denote the three rational curves intersecting $\cE_0$ and let $\cE_4, \cE_5, \cE_6$ denote the last three rational curves. 

From the theory of elliptic fibrations, we know that 
\[
\cC=3\cE_0+2(\cE_1+\cE_2+\cE_3)+\cE_4+\cE_5+\cE_6.
\]

The sum of $\sigma^8$-invariant curves is 
\[
F=\cC+\cE_0+\cE_4+\cE_5+\cE_6.
\]

Combining the two facts, we have
\[
F=4\cE_0+2(\cE_1+\cE_2+\cE_3+\cE_4+\cE_5+\cE_6)
\]
which is divisible by 2.
\end{proof}

From Lemma~\ref{lem:PciY} we get in particular that the free part of the Mordell-Weil group of sections has rank 6,
i.e. it has a $\ZZ$-basis consisting of six sections of infinite order.
In fact by \cite{shimada} the fibration does not have torsion sections.

We now find some more interesting sections. If we set $x=0$ we get
\[
zy^2-z^3t^8=0
\]
and this factorizes as
\[
z(y^2-z^2t^8)=z(y-zt^4)(y+zt^4)=0.
\]

For $z=0$ we get again the zero section. 
Otherwise we find two more sections:
\[
S_1: t\mapsto ((0:t^4:1),t),\qquad S_2: t\mapsto ((0:-t^4:1),t).
\]
If we set $x=\pm i$ and $z\neq 0$ we have four more sections:
\[
S_3: t\mapsto ((i:t^4:1),t),\qquad S_4: t\mapsto ((i:-t^4:1),t).
\]\[
S_5: t\mapsto ((-i:t^4:1),t),\qquad S_6: t\mapsto ((-i:-t^4:1),t).
\]
Since the fibration does not admit torsion sections, these six sections have infinite order. 
Observe that $S_1$ and $S_2$ meet on the smooth fiber $\cC$ at the point $(0:0:1)$, whereas 
$S_3$ and $S_4$ meet at the point $(i:0:1)$
and 
$S_5$ and $S_6$ meet at the point $(-i:0:1)$. 
These three points are two-torsion points of the fiber $\cC$.

We need to see how these sections meet the singular fiber $IV^*$ of the elliptic fibration.
By the change of coordinates (see \cite[Section 3]{kondo_trivial}) 
\[
x_1=\frac x{t^4},\quad y_1=\frac y{t^6},\quad z=z_1,\quad t_1=1/t
\]
the equation of the elliptic fibration becomes
\[
z_1y_1^2=x_1^3+z_1^2x_1t_1^8+z_1^3t_1^4.
\]

For $t_1=0$ we get the equation of the fibration at infinity:
\[
z_1y_1^2=x_1^3,
\] 
and the fiber $IV^*$ comes from the blow up of the singular point $(0:0:1)$ in the cuspidal elliptic curve. 
The equations of the previous
sections $S_0,\ldots, S_6$ become
\begin{align*} 
&S_0:t_1\mapsto ((0:1:0), t_1),\\
&S_1:t_1\mapsto ((0:t_1^2:1),t_1),\\
&S_2:t_1\mapsto ((0:-t_1^2:1),t_1),\\
&S_3:t_1\mapsto ((it_1^4:t_1^2:1), t_1),\\ 
&S_4:t_1\mapsto ((it_1^4:-t_1^2:1), t_1), \\
&S_5: t_1\mapsto ((-it_1^4:t_1^2:1), t_1),\\
&S_6: t_1\mapsto ((-it_1^4:-t_1^2:1), t_1).
\end{align*}

Since the sections $S_1,\ldots, S_6$ meet each fiber with multiplicity one, 
they will meet the fiber $IV^*$ in an external component (these all have multiplicity one). 
Observe that if $t_1=0$ the six sections $S_1,\ldots, S_6$ meet at the singular point $(0:0:1)$, 
so that these will each meet a different component of the $IV^*$ fiber than the zero section, we make this more precise later.

\end{subsection}

\subsection{Automorphisms}

We know there are at least two purely non-symplectic automorphisms of order 16 on this surface, namely
\begin{align*}
\sigma:(x,y,z)&\mapsto (-x,iy,\xi_{16}^{13}t)\\
\sigma':(x,y,z)&\mapsto (-x,-iy,\xi_{16}^{5}t) 
\end{align*}
and clearly $\sigma^2=(\sigma')^2$. Thus for both automorphisms, the fourth power $\sigma^4$ fixes the fiber over $\infty$, as mentioned in Theorem~\ref{t:tables}. 
Observe that $\sigma^*$ and $(\sigma')^*$ each multiplies the 2-form by $\xi_{16}$. 

Before computing the invariant lattices we first give an alternate proof of the unicity of $X_6$. Our proof shows that in fact automorphisms of the same type as $\sigma$ and $\sigma'$ leave the elliptic fibration invariant. 

The group of automorphisms of $X_6$ is infinite, so we do not immediately see that the automorphism of order 16 is unique (recall Definition~\ref{def:unicite}). 
We prove this fact after we prove that $X_6$ is unique. In particular, we will see that any purely nonsymplectic automorphism of one of the types listed in Table~\ref{tab:thm3.1} is conjugate to either $\sigma$ or $\sigma'$.

\subsubsection*{Alternate proof of unicity of $X_6$}

We begin with the following lemma, detailing the fibers of an elliptic fibration. 

\begin{lem}\label{A1}
Let $Y$ be a K3 surface with purely non--symplectic automorphism $\varphi$ 
of order 16, and $\Pic(Y)=S(\varphi^8)$, and let $\cC$ be a genus 1 curve 
fixed by $\varphi^4$.
Then $Y$ admits a $\varphi$-invariant elliptic fibration, with 17 singular fibers, namely a fiber of type $IV^*$ over $\infty$, and 16 $I_1$ fibers, which are interchanged by $\varphi$.
\end{lem}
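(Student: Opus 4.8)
The plan is to build the fibration directly from the fixed curve $\cC$ and then to read off its singular fibers from the automorphism $\bar\varphi$ induced on the base together with the topological Euler characteristic of $Y$.

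First I would note that $\cC$ is an irreducible curve of genus $1$, so $\cC^2 = 2g(\cC)-2 = 0$; hence $|\cC|$ is a genus-one pencil and defines an elliptic fibration $\pi\colon Y \to \PP^1$ with $\cC$ as a fiber. By Theorem~\ref{t:tables}, $\varphi$ restricts to an order-$4$ automorphism of $\cC$, so in particular $\varphi(\cC)=\cC$ and the fiber class $f=[\cC]$ is $\varphi^*$-invariant. Thus $\varphi$ sends fibers to fibers and descends to an automorphism $\bar\varphi$ of the base $\PP^1$, which gives the $\varphi$-invariance of $\pi$. Again by Theorem~\ref{t:tables} the fibration carries a $\varphi$-invariant fiber of type $IV^*$, which must lie over a fixed point of $\bar\varphi$; since the smooth fiber $\cC$ lies over another fixed point, I place $\cC$ over $0$ and the $IV^*$ over $\infty$.

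The key step, which I expect to be the main obstacle, is to pin down the order of $\bar\varphi$. Here I would use that $\varphi^4$ is a non-symplectic automorphism of order $4$ (it acts on $\omega_Y$ by $\xi_{16}^4$, a primitive fourth root of unity) that fixes $\cC$ pointwise. For such an automorphism the eigenvalue on the normal direction to $\cC$ is governed by its action on $\omega_Y$ and is again a primitive fourth root of unity. Because $\cC$ is a fiber, this normal direction is precisely the base direction at $0$, so $\bar\varphi^4$ acts on $\PP^1$ as multiplication by a primitive fourth root of unity and has order $4$. Consequently $\bar\varphi$ has order exactly $16$ and acts freely on $\PP^1\setminus\{0,\infty\}$, with every orbit of length $16$. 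Getting this local normal-bundle computation right is the delicate point; once it is in hand, the rest is forced.

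It then remains to count fibers using $e(Y)=24$. The $IV^*$ contributes $8$ and the smooth fiber $\cC$ contributes $0$, leaving Euler number $16$ for the singular fibers over $\PP^1\setminus\{0,\infty\}$. These occur in $\bar\varphi$-orbits of length $16$, each orbit consisting of fibers of one common type and contributing $16\,e(F)$ to the total. Since $e(F)\geq 1$ for any singular fiber, the only way to reach exactly $16$ is a single orbit of $16$ fibers with $e(F)=1$; as $I_1$ is the unique fiber type of Euler number $1$, these are $16$ fibers of type $I_1$, cyclically interchanged by $\varphi$. This exhausts the Euler characteristic, so there are no further singular fibers, yielding exactly the $17$ singular fibers claimed and completing the proof.
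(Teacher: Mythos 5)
Your proposal is correct, and its skeleton is the same as the paper's: produce a $\varphi$-invariant elliptic fibration with $\cC$ and the $IV^*$ fiber over the two fixed points of the base action, show the base action has order $16$, and then force the remaining fibers to be sixteen $I_1$'s by an Euler characteristic count. The difference is in how the first two ingredients are obtained. The paper simply cites \cite[Theorem 3.1]{order4} for the invariant fibration and the proof of \cite[Theorem 3.1]{order16} for the fact that $\varphi$ acts with order $16$ on the base, whereas you reconstruct both facts inline: the fibration from the pencil $|\cC|$ (using $\cC^2=0$ and $\varphi(\cC)=\cC$), and the order of the base action from the local eigenvalue argument --- since $\varphi^4$ fixes $\cC$ pointwise, its eigenvalue on the normal bundle of $\cC$ equals its eigenvalue $\xi_{16}^4$ on $\omega_Y$, and this normal direction is carried isomorphically by $d\pi$ to the tangent line of the base at $0$, so $\bar\varphi^4$ has order $4$ and hence $\bar\varphi$ has order $16$. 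That computation is exactly the kind of argument hidden inside the reference the paper cites, so your proof is more self-contained at the cost of length. A second, smaller difference: the paper first excludes reducible fibers by a Picard rank bound and only then invokes the Euler characteristic; your count ($16$ orbits' worth of singular fibers, each orbit contributing $16\,e(F)\geq 16$ toward a remaining total of exactly $16$, and $I_1$ being the unique singular fiber with $e(F)=1$) handles irreducibility and fiber type in one stroke, making the Picard rank step unnecessary. Both endgames are valid; yours is marginally cleaner.
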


\begin{proof}
By \cite[Theorem 3.1]{order4} with these parameters, there exists a $\varphi$-invariant elliptic fibration, 
such that the fiber over $0$ is a smooth curve $\cC$ of genus 1, and the fiber over $\infty$ is of type $IV^*$. 
Moreover, $\varphi$ acts on the base of the fibration with order 16 (since $\varphi^4$ acts with order 4 on the base; see \cite[Proof of Theorem 3.1]{order16}).

Now consider the other possible fibers. First notice that if there are any other singular fibers, then $\varphi$ permutes them, and so there must be sixteen of each type. Furthermore, if there is a reducible singular fiber (and therefore 15 more) then $\rk \Pic(X)> 14$ which is impossible. 

Thus the singular fibers must be irreducible and of Euler characteristic 
\[
(24-8)/16 = 16 /16 = 1.
\] 
That narrows the possibilities to $I_1$ fibers or smooth fibers. If we denote by $e(\cdot)$ the Euler characteristic, and the fiber over $t\in \PP^1$ by $X_t$, then we have
\[
e(X)=\sum_{t\in \PP^1}e(X_t)=24.
\]
Since the Euler characteristic of a smooth fiber is 0, and $e(IV^*)=8$ (see \cite[Table IV.3.1]{Mir}), there must be at least one more singular fiber. So the sixteen $I_1$ fibers are the only possibility. 
\end{proof}

The next Lemma shows that the elliptic fibration mentioned above admits a section. Once we have a section, we can write down the Weierstrass form of the elliptic fibration $\mathcal{F}$.

\begin{lem}\label{A2}
The $\varphi$-invariant elliptic fibration $\mathcal F$ of Lemma~\ref{A1} has a section. 
\end{lem}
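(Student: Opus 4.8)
The plan is to prove that the genus-one fibration $\mathcal F$ of Lemma~\ref{A1} is jacobian by showing that the class $f$ of a fiber has divisibility one in $\Pic(Y)$, i.e. that $\operatorname{div}(f):=\gcd\{D\cdot f : D\in\Pic(Y)\}$ equals $1$. This is the right reduction because a genus-one fibration on a K3 surface admits a section exactly when the class of a fiber is not properly divisible (the index of the fibration equals $\operatorname{div}(f)$, and index one means a section exists). Since every vertical class pairs to zero with $f$, the real task is to produce a horizontal divisor $D$ with $D\cdot f=1$, equivalently to rule out $\operatorname{div}(f)=2$.

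First I would pin down $\Pic(Y)$ by running the argument of Lemma~\ref{lem:PciY} verbatim for $Y$: the involution $\varphi^8$ is non-symplectic with $S(\varphi^8)=\Pic(Y)$ of rank $14$, and the fiber data of Lemma~\ref{A1} identify its fixed locus as the genus-one fiber $\cC$ over $0$ together with the four $\varphi^8$-invariant components of the $IV^*$ fiber. The relation $\cC+\cE_0+\cE_4+\cE_5+\cE_6=4\cE_0+2(\cE_1+\cE_2+\cE_3+\cE_4+\cE_5+\cE_6)$ shows the fixed-curve sum is $2$-divisible, so $\delta=0$; hence by Nikulin's classification and Theorem~\ref{unicity} one gets $\Pic(Y)\cong U\oplus D_4^{3}$. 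In particular this lattice is $2$-elementary, so $\operatorname{div}(f)\in\{1,2\}$ and only the value $2$ has to be excluded.

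Excluding $\operatorname{div}(f)=2$ is the heart of the matter and the step I expect to be the main obstacle, because purely lattice-theoretically $U\oplus D_4^{3}$ carries primitive isotropic vectors of both divisibilities (using $v^{\oplus 2}\cong u^{\oplus 2}$ one sees that $U\oplus D_4^{3}$ even splits a $U(2)$), so the geometry of the $IV^*$ fiber must intervene. The components of $IV^*$ span the affine diagram $\widetilde{E_6}$ with radical $\ZZ f$, giving an embedding $E_6\hookrightarrow\Pic(Y)$ with $E_6\perp f$, and since $IV^*$ is the only reducible fiber the root system of the frame lattice $f^\perp/\ZZ f$ is exactly $E_6$. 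I would then argue that a divisibility-two isotropic class cannot produce such a frame: divisibility one gives $\Pic(Y)\cong U\oplus(f^\perp/\ZZ f)$, whereas divisibility two forces a splitting through $U(2)$, and matching the resulting discriminant forms against that of $U\oplus D_4^{3}$ under the constraint that the frame root system be precisely $E_6$ leaves only $\operatorname{div}(f)=1$. Equivalently, and more geometrically, one exhibits directly an effective divisor meeting $f$ with odd multiplicity, using that the multiplicities of the $IV^*$ components have greatest common divisor $1$. Either way $\operatorname{div}(f)=1$, so $f$ is represented by a fiber meeting an effective divisor in a single reduced point, producing the section; the passage to Weierstrass form then proceeds exactly as in Section~\ref{sec31}.
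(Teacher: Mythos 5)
Your overall strategy is legitimate and genuinely different from the paper's: the reduction of ``has a section'' to $\operatorname{div}(f)=1$ is correct for elliptic K3 surfaces (there are no multiple fibers, so $f$ is primitive, and a class of fiber degree one yields an honest section via Riemann--Roch), the identification $\Pic(Y)\cong U\oplus D_4^{3}$ does follow by running the proof of Lemma~\ref{lem:PciY} for $Y$, and the assertion that the root system of the frame $W=f^\perp/\ZZ f$ is exactly $E_6$ is a correct consequence of Lemma~\ref{A1} (any root of $W$ lifts to a $(-2)$-class orthogonal to $f$, which is $\pm$effective by Riemann--Roch, hence vertical, hence supported on the $IV^*$ fiber). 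The problem is that the step you yourself call ``the heart of the matter''---excluding $\operatorname{div}(f)=2$---is never actually carried out, and neither of the two routes you sketch works as stated. The claim that divisibility two ``forces a splitting through $U(2)$'' is false in general: a primitive isotropic vector of divisibility $2$ need not lie in a $U(2)$ direct summand (already $\langle 2\rangle\oplus\langle -2\rangle$ contains such a vector but is not isomorphic to $U(2)$), so this would at least require a proof specific to $U\oplus D_4^3$, which you do not give. The ``more geometric'' alternative is a non sequitur: the components of the $IV^*$ fiber are vertical, so they pair to \emph{zero} with $f$; that their multiplicities have gcd $1$ only re-proves primitivity of $f$ and says nothing about $\operatorname{div}(f)=\gcd\{D\cdot f : D\in\Pic(Y)\}$.

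The gap is fillable, so your plan can be rescued. If $\operatorname{div}(f)=2$, the standard formula $A_W\cong (\bar f^{*})^{\perp}/\langle \bar f^{*}\rangle$ with $\bar f^{*}=[f/2]\in A_{\Pic(Y)}\cong(\ZZ/2\ZZ)^6$ shows $W$ is negative definite of rank $12$ with $|\det W|=2^6/2^2=16$. Since $A_{E_6}\cong\ZZ/3\ZZ$ has no nonzero isotropic elements, $E_6$ has no proper even overlattice and so sits primitively in $W$; writing $R=E_6^{\perp}$ in $W$, the glue group injects into $A_{E_6}$, and triviality of the glue is impossible (it would give $3\mid 16$), so $[W:E_6\oplus R]=3$ and $|\det R|=3\cdot 16=48$. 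By Hermite's bound in rank $6$ (with $\gamma_6^6=64/3$), the minimal norm of $R$ is at most $(64\cdot 48/3)^{1/6}=2^{10/6}<4$, so the even lattice $R$ contains a $(-2)$-vector, i.e.\ a root of $W$ orthogonal to $E_6$, contradicting that the root system of $W$ is exactly $E_6$. Hence $\operatorname{div}(f)=1$ and the fibration has a section. Note that the paper argues quite differently: it uses the auxiliary elliptic fibration $\mathcal E$ with three $I_0^{*}$ fibers induced by the summand $U\subseteq U\oplus D_4^{3}$, determines the action of $\varphi^8$ on the resulting configuration of sixteen rational curves, locates the $IV^{*}$ fiber of $\mathcal F$ inside that configuration, and observes that the leftover curves are then horizontal of degree one for $\mathcal F$, i.e.\ sections; your lattice-theoretic route, once completed as above, avoids that geometric analysis entirely.
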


\begin{proof} 

Because $\Pic (Y)\cong U\oplus D_4^3$ 
by \cite[Lemma 2.1]{kondo_trivial} or \cite[\S 3, proof of Corollary 3]{shapiro} 
there is an elliptic fibration $\mathcal E$ induced by the inclusion $U\subseteq \Pic(Y)$ with 3 singular fibers of type $I_0^*$.
This is illustrated in Figure~\ref{fig:2nd_fibration}. The line labelled $S$ represents a section and the others make up the three singular fibers. 

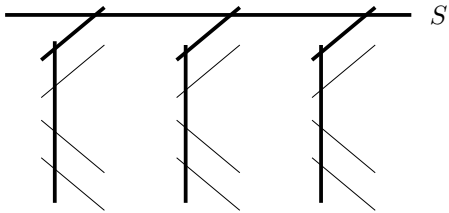
\begin{figure}[ht]\label{fig:Efibration}
\caption{The fibration $\mathcal E$.  The line labelled $S$ is the section for the fibration $\cE$. The thicker lines represent the singular fiber $IV^*$ for the fibration $\mathcal{F}$.}
	\centering

\begin{tikzpicture}[xscale=.6,yscale=.5]

	\draw [line width=0.5mm](-1,7)--(8,7);
	\node [right] at (8.2,7){$S$};

	\draw [line width=0.5mm](-0.2,5.8)--(1.2,7.2);
	\draw [line width=0.05mm](-0.2,4.8)--(1.2,6.2);
	\draw [line width=0.5mm](0.1,6.3)--(0.1,2);
	\draw [line width=0.05mm](1.2,2.8)--(-0.2,4.2);
	\draw [line width=0.05mm](1.2,1.8)--(-0.2,3.2);

	\draw [line width=0.5mm](5.8,5.8)--(7.2,7.2);
	\draw [line width=0.05mm](5.8,4.8)--(7.2,6.2);
	\draw [line width=0.5mm](6,6.2)--(6,2);
	\draw [line width=0.05mm](7.2,2.8)--(5.8,4.2);
	\draw [line width=0.05mm](7.2,1.8)--(5.8,3.2);

	\draw [line width=0.5mm](2.8,5.8)--(4.2,7.2);
	\draw [line width=0.05mm](2.8,4.8)--(4.2,6.2);
	\draw [line width=0.5mm](3,6.2)--(3,2);
	\draw [line width=0.05mm](4.2,2.8)--(2.8,4.2);
	\draw [line width=0.05mm](4.2,1.8)--(2.8,3.2);

\end{tikzpicture}
\label{fig:2nd_fibration}
\end{figure}

Since $\mathcal F$ is another fibration on the same surface, we should recognize its fibers in Figure~\ref{fig:2nd_fibration} too.
In Figure~\ref{fig:2nd_fibration} we show a fiber of type $IV^*$ made up of the thicker lines. 

\textit{A priori}, there might be another way of embedding the fiber $IV^*$ in $\Pic(Y)$. We will show this is not the case. 

In the elliptic fibration in Figure~\ref{fig:2nd_fibration}, the Mordell-Weil group has rank 0, and one can see that the fibration has only one section. Furthermore, because $\varphi^8$ fixes the Picard lattice, the elliptic fibration is $\varphi^8$-invariant.

We know that the invariant lattice for $\varphi^8$ has rank 14 and $a=6$, so from the classification of non-symplectic involutions it has a fixed locus of one genus 1 curve and 4 rational curves. 

We will find these fixed curves in the configuration in Figure~\ref{fig:2nd_fibration}. First we look at the section. If the section is not fixed pointwisely, then it is rotated by order 2. This means that two of the $I_0^*$ fibers would be permuted, and therefore then the Picard lattice is not invariant. So that leaves us with the section being fixed pointwisely.

From there, we understand exactly the action of $\varphi^8$ on the other curves in this configuration, namely, $\varphi^8$ fixes the section and the central curve of each of the singular fibers. It acts by order two on the curve connecting the section to the fixed curve of each fiber. And it must also act by order two on each of the other curves in each fiber. Now there can’t be any isolated fixed points, so the other fixed point on each of these rational curves must be a point of the genus one curve that is fixed. That means the genus one curve must be a 3-section. 

Now that we see how the curves are situated with regard to $\varphi^8$, we can then ask how does the $IV^*$ fiber fit into the picture from our other elliptic fibration. We know that the central curve of the $IV^*$ fiber as well as the extremal curves are fixed pointwisely by $\varphi^8$. There is only one way this configuration can fit into the fibration in Figure~\ref{fig:2nd_fibration}, so it has to be situated as we have described. 

The other curves not included in the fiber must therefore be sections and thus $\mathcal F$ admits a section.
\end{proof}

Now that we know there is a section, we can determine the Weierstrass equation $y^2=x^3+A(t)x+B(t), t\in\mathbb P^1$ of $\mathcal F$.

As before, let $a(t),b(t),\delta(t)$ denote the vanishing order of $A,B,\Delta$, resp., at $t\in \PP^1$.
By \cite[Table IV.3.1]{Mir}, one has $a(\infty)\geq 3, b(\infty)=4, \delta(\infty)=8$. Furthermore, $\Delta$ has 16 more simple zeroes $t_1,\ldots, t_{16}$ such that $a(t_i)=b(t_i)=0$.

Since $\deg A(t)=8$ and $a(\infty)\geq 3$, the degree of $A(t)$ is less or equal than 5 in the variable $t$. Since $\sigma$ acts on the base $\PP^1$ as an automorphism of order 16, if there were a root $\alpha$ of $A$ other than $0$ or $\infty$, there should be also the 15 other roots, obtained as the images of $\alpha$ under the group action by $\sigma$. This is impossible, thus $A(t)=1$, i.e. $a(\infty)=8$.

Since $\deg B(t)=12$ and $b(\infty)=4$, the degree of $B(t)$ in the variable $t$ is 8. 
Following \cite[Proof of Proposition 4.1]{order32}, we find that $B(t)=t^8$ and thus the Weierstrass equation of $\mathcal F$ is 
\[
y^2=x^3+x+t^8.
\]

By Lemmas~\ref{A1} and \ref{A2} the $\varphi$-invariant elliptic fibration has equation $y^2=x^3+x+t^8$ so that this admits the automorphisms $\sigma$, $\sigma'$ and $\varphi$. Observe that by the Shioda-Tate formula \cite[Corollary 1.5]{Shioda} and the classification of Shimada (see \cite{shimada} and  \cite[Table 1, No. 461]{shimada_arxiv}), the fibration has no torsion sections and its Mordell-Weil group is $\ZZ^6$.  

\subsubsection*{Unicity of $\sigma$ and $\sigma'$}

We will now show that $\sigma$ and $\sigma'$ are unique up to conjugation in $\Aut(X_6)$. We first need to know that the elliptic fibration we just described is invariant under the automorphism group $\Aut(X_6)$. On the way, we will be able to give a complete description of $\Aut(X_6)$.

\begin{lem}
	The elliptic fibration of Lemma~\ref{A1} is invariant under the automorphism group $\Aut(X_6)$. 
\end{lem}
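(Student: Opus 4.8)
The plan is to reduce the statement to the assertion that a single class is fixed, and then to control $\Aut(X_6)$ through its action on the transcendental lattice. Write $f\in\Pic(X_6)$ for the class of a fibre of the fibration $\mathcal{F}$ of Lemma~\ref{A1}. Since $\mathcal{F}$ is the pencil $|f|$, an automorphism $g$ sends fibres to fibres if and only if $g^{\ast}f=f$; thus it is enough to prove that $f$ is fixed by every element of $\Aut(X_6)$, equivalently that $f$ is fixed by the image of $\Aut(X_6)$ in $O(\Pic(X_6))$.

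First I would use the transcendental lattice $T_{X_6}$, which has rank $8=\phi(16)$. Because $\sigma^{\ast}$ acts on $\omega_{X_6}$ by $\xi_{16}$ and $\rk T_{X_6}=\phi(16)$, the isometry $\sigma^{\ast}$ endows $T_{X_6}$ with the structure of a rank one $\ZZ[\xi_{16}]$-module; hence every Hodge isometry of $T_{X_6}$ is multiplication by a unit of norm one, i.e. by a root of unity, and the group of Hodge isometries of $T_{X_6}$ is cyclic of order $16$. Restriction therefore defines a homomorphism $\Aut(X_6)\to \Zfin{16}$ whose kernel is precisely the group $\Aut_s(X_6)$ of symplectic automorphisms, and which is surjective since $\sigma$ realises a generator. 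Consequently $\Aut(X_6)=\Aut_s(X_6)\cdot\langle\sigma\rangle$. As the Weierstrass equation exhibits $\sigma$ (and $\sigma'$) as fibre preserving, we have $\sigma^{\ast}f=f$, so the problem is reduced to showing that every symplectic automorphism fixes $f$.

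To treat the symplectic part I would argue that $\Aut_s(X_6)$ preserves $\mathcal{F}$. The Mordell--Weil group $\MW(\mathcal{F})\cong\ZZ^6$ acts by translations, which are fibre preserving and hence satisfy $g^{\ast}f=f$; the induced isometries are the Eichler transvections fixing $f$. It remains to show that no symplectic automorphism moves $f$. A symplectic $g$ carries $\mathcal{F}$ to a genus one fibration $g(\mathcal{F})$ with exactly the same singular fibres, in particular one fibre of type $IV^{\ast}$ and sixteen of type $I_1$. The plan is to establish, by a lattice computation in $\Pic(X_6)=U\oplus D_4^{3}$ enumerating the primitive nef isotropic classes whose orthogonal complement contains the $E_6$ coming from a reducible fibre, that $f$ is the only class carrying a $IV^{\ast}$ fibre. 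This forces $g(\mathcal{F})=\mathcal{F}$, hence $g^{\ast}f=f$, and combined with the previous paragraph gives $g^{\ast}f=f$ for all $g\in\Aut(X_6)$.

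I expect the decisive difficulty to be this last step. Since $\Aut_s(X_6)$ is infinite one cannot enumerate its elements, so one must argue intrinsically that the class $f$ --- equivalently the $IV^{\ast}$ fibre configuration inside $U\oplus D_4^{3}$ --- is rigid, and in particular cannot be exchanged with a competing fibration by any symplectic isometry. The transcendental lattice reduction of the second paragraph is what makes the non-symplectic directions automatic and isolates the rigidity of the $IV^{\ast}$ fibration as the single genuine obstacle.
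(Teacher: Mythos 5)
Your first two reductions are sound: $\rk T_{X_6}=8=\phi(16)$, the group of Hodge isometries of $T_{X_6}$ is finite cyclic and must have order exactly $16$, the kernel of the restriction $\Aut(X_6)\to O(T_{X_6})$ is precisely the symplectic subgroup, and $\sigma$ visibly preserves the fibration; so the statement does reduce to showing that every symplectic automorphism fixes the fibre class $f$, and the Mordell--Weil translations are handled correctly. But the step you yourself flag as decisive --- that $f$ is the \emph{only} primitive nef isotropic class carrying a fibre of type $IV^*$ --- is not proved, and the route you propose does not work as stated. First, nefness is not a condition you can test inside the abstract lattice $U\oplus D_4^{\oplus 3}$: the nef cone is cut out by the classes of the smooth rational curves on $X_6$, of which there are infinitely many (precisely because $\Aut(X_6)$ is infinite), so ``enumerating the primitive nef isotropic classes'' is not a finite, well-defined lattice computation. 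Second, the standard lattice-theoretic classifications of elliptic fibrations (Kneser--Nishiyama type arguments) produce fibrations only up to the action of $\Aut(X_6)$; knowing that there is a single $IV^*$ fibration up to $\Aut(X_6)$ is strictly weaker than what you need, because an $\Aut(X_6)$-orbit of isotropic classes can be infinite. Upgrading ``unique up to $\Aut(X_6)$'' to ``unique as a class'' is equivalent to the assertion that $\Aut(X_6)$ fixes $f$ --- which is the lemma itself. So, as it stands, the argument is circular exactly at its core, and the genuinely hard content is untouched.

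For comparison, the paper avoids all of this with a short centrality argument. The involution $i=\sigma^8$ acts trivially on $\Pic(X_6)$ (because $\Pic(X_6)=S(\sigma^8)$) and by a scalar on $H^0(\Omega_{X_6}^2)$; since $\Aut(X_6)$ injects into $O(\Pic(X_6))\times\GL(H^0(\Omega_{X_6}^2))$ and both components of the image of $i$ are central there, $i$ lies in the center of $\Aut(X_6)$. Hence every automorphism preserves $\Fix(i)$, which by Nikulin's classification of non-symplectic involutions (invariant lattice of rank $14$ and length $6$) contains a unique smooth curve of genus $1$, namely a fibre $C$ of the fibration. Every automorphism therefore maps $C$ to itself, hence preserves the pencil $|C|$ and the fibration --- with no need to split off the symplectic part and no uniqueness statement about isotropic classes. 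If you want to salvage your approach, you should replace the proposed lattice enumeration by an argument of this kind (for instance, using that any $IV^*$ fibre of a competing fibration would have to be built from curves invariant under $i$, whose fixed rational curves are known), but at that point you are essentially reproducing the paper's proof.
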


\begin{proof}
Let $i=\sigma^8$; this is a non--symplectic involution.  Recall that we have an injective homomorphism, see \cite[Lemma 1.6]{MO}:
\[
\Aut(X_6) \to O(\Pic(X_6)) \times \GL(H^0(\Omega_{X_6}^2)),\,\,\sigma\mapsto (\sigma_{|\Pic(X_6)},\alpha(\sigma))
\] 
where the map to $\GL(H^0(\Omega_{X_6}^2)$ is defined by $\sigma^* \omega_{X_6}=\alpha(\sigma) \omega_{X_6}$.
Furthermore, $i$ acts trivially on $\Pic(X_6)$ (since $S(\sigma^8)=\Pic(X_6)$), by combining these facts 
one easily see that $i$ lies in the center of $\Aut(X_6)$.

The invariant lattice $S(\sigma^8)$ has rank 14 and length 6, so by the theory of Nikulin (see \cite{nikulin2}) 
our surface $X_6$ contains a unique smooth curve of genus 1 which is pointwise fixed by $i$. This is the curve $C$ from Lemma~\ref{A1}. Since $i$ is in the center of $\Aut(X_6)$, any automorphism must preserve $C$ (not necessarily pointwise).
Thus the complete linear system $|C|$ is $\Aut(X_6)$-invariant and therefore so is the elliptic fibration from Lemma~\ref{A1}. 

\end{proof}

Now we know that every automorphism of $X_6$ leaves the fibration invariant. Let $H\subseteq \Aut(X_6)$ denote the subgroup that leaves the zero section invariant. Notice $\sigma, \sigma'\in H$. In fact, we will show that any purely non-symplectic automorphism of order 16 is conjugate in $\Aut(X_6)$ to a power of $\sigma$ or $\sigma'$.

As mentioned previously, the Mordell-Weil group of sections of $X_6$ is isomorphic to $\ZZ^6$. Furthermore, each section of the elliptic fibration in question induces an automorphism of $X_6$. To set notation, let $\MW(X_6)$ denote the Mordell-Weil group. Given a section $P\in\MW(X_6)$, let us denote the corresponding automorphism by $T_P$, and addition (resp. subtraction) in $\MW(X_6)$ by $\oplus$ (resp. $\ominus$).  

In \cite[Section~3]{Karayayla}, we see that if $g\in H$, then we can act on $\MW(X_6)$ by conjugation as follows:
\begin{equation}\label{eq:MWAction}
gT_Pg^{-1}=T_{g(P)}. 
\end{equation}
Where $g(P)$ is the image of a section under $g$. Furthermore, we obtain the following theorem. This is essentially \cite[Theorem 3.0.1]{Karayayla}. (This is proved for relatively minimal rational elliptic surfaces, but the same proof also holds in our situation.) For completeness, we also include the proof here. 

\begin{lem}[cf. \cite{Karayayla}]\label{lem:semidirect}
	We have $\Aut(X_6)\cong \MW(X_6)\rtimes H$. 
\end{lem}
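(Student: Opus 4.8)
The plan is to realize $\Aut(X_6)$ as a semidirect product by constructing an internal decomposition: an abelian normal subgroup coming from the Mordell-Weil group, and a complementary subgroup $H$ stabilizing the zero section. Since we have just shown that the entire automorphism group $\Aut(X_6)$ preserves the elliptic fibration $\mathcal{F}$ of Lemma~\ref{A1}, every $g\in\Aut(X_6)$ induces an automorphism of the base $\PP^1$ and hence permutes the fibers. The Mordell-Weil group $\MW(X_6)\cong\ZZ^6$ acts on $X_6$ by fiberwise translations: to each section $P$ we associate the translation automorphism $T_P$, and these are symplectic automorphisms that fix the fibration and act trivially on the base. This gives an injective homomorphism $\MW(X_6)\to\Aut(X_6)$ whose image I will call $N$.

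\textbf{The three structural claims.} First I would verify that $N$ is normal in $\Aut(X_6)$. This follows from the conjugation formula \eqref{eq:MWAction}: for $g\in H$ we have $gT_Pg^{-1}=T_{g(P)}$, so conjugation carries translations to translations; since a general $g\in\Aut(X_6)$ differs from an element of $H$ by a translation (as I will establish below), and translations commute up to translation, $N$ is preserved under all of $\Aut(X_6)$. Second, I would check that $H\cap N$ is trivial: an element of $N$ is a translation $T_P$ that is the identity on the base, and if it also fixes the zero section $S_0$ then $P=S_0$ is the zero of the group law, forcing $T_P=\mathrm{id}$. Third, and most importantly, I would show $N\cdot H=\Aut(X_6)$, i.e. every automorphism $g$ factors as a translation composed with an element stabilizing the zero section. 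Given $g\in\Aut(X_6)$, the image $g(S_0)$ is another section, hence an element $Q\in\MW(X_6)$; then $T_{\ominus Q}\circ g$ sends $S_0$ to $S_0$, so it lies in $H$, and $g=T_Q\circ(T_{\ominus Q}\circ g)\in N\cdot H$.

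\textbf{Assembling the semidirect product.} Combining these three facts—$N\trianglelefteq\Aut(X_6)$, $N\cap H=\{1\}$, and $N H=\Aut(X_6)$—gives the internal semidirect product decomposition $\Aut(X_6)\cong N\rtimes H\cong\MW(X_6)\rtimes H$, where $H$ acts on $\MW(X_6)$ precisely by the action $P\mapsto g(P)$ appearing in \eqref{eq:MWAction}. This is the statement of the lemma.

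\textbf{Main obstacle.} I expect the surjectivity step $NH=\Aut(X_6)$ to be the delicate point, and specifically the claim that $T_{\ominus Q}\circ g$ genuinely preserves the zero section rather than merely sending it to a section. This requires that $g(S_0)$ be an honest section of $\mathcal{F}$ and that the Mordell-Weil translation corrects it exactly; one must invoke that $g$ respects the fibration (already proved) so that $g(S_0)$ meets each fiber once and is therefore a section, and that the fibration has no torsion (noted via Shioda-Tate and \cite{shimada}) so the group law on sections is unambiguous. The remaining verifications—that fiberwise translations are well-defined automorphisms and that the conjugation action matches \eqref{eq:MWAction}—are exactly the content cited from \cite[Section 3]{Karayayla}, whose proof for rational elliptic surfaces transfers verbatim to our K3 setting.
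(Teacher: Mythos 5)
Your proposal is correct and follows essentially the same route as the paper: factor an arbitrary $g\in\Aut(X_6)$ as $g=T_P h$ with $P=g(S_0)$ and $h=T_{\ominus P}\,g\in H$, then use $\MW(X_6)\cap H=1$, with normality of the translation subgroup coming from the conjugation formula \eqref{eq:MWAction}. The only difference is presentational—you verify normality inside the proof, whereas the paper records it in a remark immediately afterwards—so there is nothing to change.
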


\begin{proof} 
	Let $g\in \Aut(X_6)$. Since $g$ preserves the fibration, it maps sections to sections. Let $P=g(O)$, and we see that $h=(T_{\ominus P})g$ fixes the zero section. Thus $h\in H$. So we see that $g=T_Ph$ with $T_P\in \MW(X_6)$ and $h\in H$. Furthermore, $\MW(X_6)\cap H=1$, which implies the result. 
\end{proof}

\begin{rem}
	Using Lemma~\ref{lem:semidirect} and \eqref{eq:MWAction}, we see that $\MW(X_6)$ is a normal subgroup of $\Aut(X_6)$. 
\end{rem}

\begin{lem}\label{lem:Hstructure}
	$H\cong\ZZ/2\ZZ\times \ZZ/16\ZZ$. 
\end{lem}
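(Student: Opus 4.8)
The plan is to study $H$ through its action on the base of the fibration. By the preceding lemma every automorphism of $X_6$ preserves the elliptic fibration $\mathcal F$, and by definition every $g\in H$ fixes the zero section $S_0$. Hence each $g\in H$ induces an automorphism $\bar g$ of the base $\PP^1$ and, fibre by fibre, restricts to an automorphism of the elliptic curve fixing the neutral element $S_0\cap X_t$, i.e.\ a group automorphism of the fibre. This gives a homomorphism $\rho\colon H\to\Aut(\PP^1)$, $g\mapsto\bar g$, and I would compute $\ker\rho$ and $\operatorname{im}\rho$ in turn, following the framework of \cite{Karayayla}.

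For the kernel: if $\bar g=\mathrm{id}$ then $g$ preserves every fibre and fixes the origin on each, so $g$ acts on the generic fibre as a group automorphism. The $j$-invariant of $\mathcal F$ is $j(t)=1728\cdot\frac{4}{4+27t^{16}}$, which is non-constant, so a generic fibre has $j\neq 0,1728$ and automorphism group exactly $\{\pm 1\}$. As $g$ varies holomorphically it must then be either the identity or the elliptic involution $\iota\colon(x,y,z)\mapsto(x,-y,z)$, which fixes $S_0$; note $\iota$ is distinct from $\sigma^8$, which acts nontrivially on the base by $t\mapsto -t$. This gives $\ker\rho=\langle\iota\rangle\cong\ZZ/2\ZZ$.

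For the image: $\bar g$ permutes the singular fibres of $\mathcal F$. Since the $IV^*$ fibre over $\infty$ is the unique reducible fibre it is fixed, so $\bar g$ is affine, $t\mapsto at+b$; it permutes the sixteen points carrying the $I_1$ fibres, namely the roots of $\Delta=4+27t^{16}$. As these roots sum to $0$, comparing the sum of the roots with the sum of their images forces $b=0$, and as they form a single orbit $\{\xi_{16}^{j}r_0\}$ under multiplication by $\xi_{16}$, the condition $(ar_0)^{16}=r_0^{16}$ forces $a^{16}=1$. Hence $\operatorname{im}\rho\subseteq\langle t\mapsto\xi_{16}t\rangle\cong\ZZ/16\ZZ$. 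Conversely $\sigma$ acts on the base by $t\mapsto\xi_{16}^{13}t$, which generates this group since $\gcd(13,16)=1$, so $\operatorname{im}\rho=\ZZ/16\ZZ$.

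Combining the two computations gives $|H|=2\cdot 16=32$. To identify the isomorphism type I would exhibit $H$ as an internal direct product: $\iota$ commutes with $\sigma$ (either by the one-line verification $\sigma\iota=\iota\sigma$, or because $[-1]$ is central among section-preserving fibre automorphisms), while $\langle\sigma\rangle\cap\langle\iota\rangle=1$ because $\rho$ is injective on $\langle\sigma\rangle$ (no nontrivial power of $\sigma$ acts trivially on the base) whereas $\iota\in\ker\rho$. Therefore $H=\langle\sigma\rangle\times\langle\iota\rangle\cong\ZZ/16\ZZ\times\ZZ/2\ZZ$, as claimed. The step that I expect to need the most care is the determination of $\operatorname{im}\rho$: ruling out any larger group of base automorphisms relies on the uniqueness of the $IV^*$ fibre together with the precise location of the $I_1$ fibres as the roots of $4+27t^{16}$; the kernel computation is routine once one observes that the $j$-invariant is non-constant.
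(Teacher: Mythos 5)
Your proof is correct, but it takes a genuinely different route from the paper. The paper decomposes $H$ via its action on the holomorphic $2$-form: it first shows $H$ is finite (an invariant big and nef class), then uses the exact sequence $1\to H_s\to H\to\mu_m\to 1$, pinning down $m=16$ from $16\mid m$ together with $\phi(m)\le \rk T_{X_6}=8$, and showing $|H_s|=2$ by Nikulin-style geometry of symplectic automorphisms (isolated fixed points; a finite-order symplectic automorphism cannot fix a curve) acting on the $IV^*$ fiber. You instead decompose $H$ via its action on the base, $1\to\ker\rho\to H\to\operatorname{im}\rho\to 1$: the kernel is the fiberwise elliptic involution $\iota\colon(x,y,t)\mapsto(x,-y,t)$, identified using the non-constant $j$-invariant, and the image is $\mu_{16}$, pinned down by the unique $IV^*$ fiber and the sixteen $I_1$ fibers over the roots of $4+27t^{16}$. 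Note that the two distinguished order-2 normal subgroups are different: the paper's $H_s$ is generated by the \emph{symplectic} involution $(x,y,t)\mapsto(x,-y,-t)$, while your $\ker\rho$ is generated by the \emph{non-symplectic} $\iota$ (which equals $\sigma^8$ composed with that symplectic involution), so the two splittings $H=\langle\sigma\rangle\times\langle\sigma^{-1}\sigma'\rangle$ and $H=\langle\sigma\rangle\times\langle\iota\rangle$ agree as subgroups of $\Aut(X_6)$ but are packaged differently. What your approach buys: it is more elementary and self-contained, relying only on the explicit Weierstrass data (discriminant, $j$-invariant), it avoids both the transcendental-lattice bound $\phi(m)\le\rk T_X$ and the theory of fixed loci of symplectic automorphisms, and it yields finiteness of $H$ for free since kernel and image are bounded directly. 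What the paper's approach buys: it is less wedded to the explicit equation, so the same argument pattern applies when no Weierstrass form is at hand, and the identification of $H_s$ clarifies which elements of $H$ are symplectic --- a fact the paper uses later in the unicity theorem, where purely non-symplectic elements of $H$ must be recognized. Your final step (commutativity of $\iota$ with $\sigma$, trivial intersection of $\langle\sigma\rangle$ and $\langle\iota\rangle$, hence an internal direct product of order $32=|H|$) is if anything spelled out more carefully than the paper's closing sentence.
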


\begin{proof}
We have seen that every automorphism of $X_6$ leaves the fibration invariant. Moreover since the group $H$ 
leaves the zero section invariant, we can find a big and nef divisor in $\Pic(X_6)$ which is invariant by $H$, hence
$H$ is finite. Consider now the exact sequence of groups
$$
1\longrightarrow H_s\longrightarrow H\longrightarrow \mu_m\longrightarrow 1
$$ 
where the last map is $\alpha : H\longrightarrow \mu_m$, $g\mapsto \alpha(g)$ defined by $g^*\omega_{X_6}=\alpha(g)\omega_{X_6}$. 
In particular $H_s$ is the normal subgroup in $H$ of symplectic automorphisms and $\mu_m=H/H_s$ is a cyclic group
of some order $m$ (see \cite[Section 3]{nikulin3}). We know already that $H$ contains two non--symplectic automorphisms acting with a primitive $16$th--root of unity on $\omega_{X_6}$ hence $m$ is a multiple of $16$. Now recall that $\rk T_X=8$
and so $\phi(m)\leq 8$ which implies that $m=16$. 

We now need to find the structure of $H_s$. An element $\tau\in H_s$ acts on the base $\PP^1$ of the fibration either fixing each point or fixing two isolated points. In either case it must fix the point over which the fiber $IV^*$ lies, since there is only one fiber of this kind.
Furthermore, $\tau$ leaves the zero section invariant, so it acts on the $IV^*$ fiber either preserving each component or as a reflection. In the first case the central component of the $IV^*$ fiber would be pointwisely fixed. As a symplectic automorphism has only isolated fixed points, this is not possible (see \cite[Section 5]{nikulin3}). 

Hence $\tau$ acts on the $IV^*$ fiber as a reflection with four isolated fixed points. This means that the order of $\tau$ (on all of $X_6$) is even and it admits at least four fixed points. Observe that if the order of $\tau$ were bigger than $2$ then $\tau^2$ would be a non--trivial symplectic automorphism of $X_6$ of finite order with a curve in the fixed locus (the central component of the $IV^*$ fiber) which again is not possible. So $\tau$ has order $2$. 

Further observe that $\tau$ preserves a smooth fiber of the fibration, which must contains the other four isolated fixed points (recall a symplectic involution fixes exactly eight isolated points). In particular, $\tau$ cannot act as the identity on the base of the fibration, otherwise the nodes in the sixteen $I_1$ fibers would be fixed. 

So we see that any element of $H_s$ is an involution, that fixes the same four fixed points of the $IV^*$ fiber. 

If $\tau'\in H_s$ is another symplectic automorphism, then $\tau'\circ \tau$ is a finite order symplectic automorphism (in fact an involution, since it lies in $H_s$) that acts on $IV^*$ preserving each rational curve and so it fixes the 
central component of the fiber by the previous argument. This is impossible for a finite order symplectic automorphism. So $|H_s|=2$ which gives $|H|=2\cdot 16=32$. Since $H$ already contains the subgroup $\mathbb{Z}/2\mathbb{Z}\times \mathbb{Z}/16\mathbb{Z}$ generated by $\sigma$ and $\sigma'$, we are done. 
\end{proof}

Now we understand the structure of the automorphism group, we want to look more closely at automorphisms of order 16. 

\begin{lem}\label{lem:semidirect}
	Let $g\in\Aut(X_6)$ be an automorphism of order 16. Then $g=T_Ph$ with $h\in H$ satisfying $h^{16}=1$ and $P+h(P)+\dots + h^{15}(P)=O$.  
\end{lem}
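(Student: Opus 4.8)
The plan is to read off the two conditions directly from the group law of the semidirect product $\Aut(X_6)\cong\MW(X_6)\rtimes H$ together with the conjugation formula \eqref{eq:MWAction}. First I would write $g=T_P h$ with $T_P\in\MW(X_6)$ and $h\in H$; this decomposition exists and is unique because $\MW(X_6)$ is normal, $H$ is a complement, and $\MW(X_6)\cap H=1$. Using $h T_Q h^{-1}=T_{h(Q)}$ from \eqref{eq:MWAction}, the product of two such elements is
\[
(T_P h)(T_Q h')=T_P\,(hT_Qh^{-1})\,hh'=T_{P\oplus h(Q)}\,(hh'),
\]
so the $H$-component multiplies as in $H$ while the $\MW$-component is twisted by the action of $h$.

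Next I would establish, by induction on $n$, the closed form
\[
g^{n}=T_{P\oplus h(P)\oplus\cdots\oplus h^{n-1}(P)}\,h^{n}.
\]
The base case $n=1$ is the definition of $g$, and the inductive step is an immediate application of the displayed group law, after noting that $P\mapsto h(P)$ is a group homomorphism of $\MW(X_6)$ (this follows from $T_{Q\oplus Q'}=T_QT_{Q'}$ by applying conjugation by $h$ to both factors, or from the fact that conjugation by $h$ is an automorphism of $\Aut(X_6)$ preserving the normal subgroup $\MW(X_6)$).

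Finally, I would specialize to $n=16$. Since $g$ has order $16$ we have $g^{16}=\mathrm{id}=T_O\cdot 1$, and comparing with the closed form gives $h^{16}=1$ and $P\oplus h(P)\oplus\cdots\oplus h^{15}(P)=O$ by uniqueness of the semidirect decomposition (equivalently, by projecting onto the two factors). The condition $h^{16}=1$ is in fact automatic, since by Lemma~\ref{lem:Hstructure} the group $H\cong\ZZ/2\ZZ\times\ZZ/16\ZZ$ has exponent $16$; thus the real content is the vanishing of the orbit sum of $P$.

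I do not anticipate a genuine obstacle here, as the argument is essentially bookkeeping inside the semidirect product. The only points requiring care are placing the twist in the group law on the correct factor, so that the exponent sum appears as $P\oplus h(P)\oplus\cdots\oplus h^{15}(P)$ and not as $P\oplus h^{-1}(P)\oplus\cdots$, and checking that $P\mapsto h(P)$ is additive so that the inductive step is valid.
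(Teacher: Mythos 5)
Your proof is correct and is essentially the paper's own argument: the paper writes $g=T_Ph$ via the semidirect decomposition, asserts the identity $g^{16}=T_{P+h(P)+\dots+h^{15}(P)}h^{16}$ as ``a calculation,'' and concludes from $H\cap\MW(X_6)=1$; your induction simply spells out that calculation. The extra remark that $h^{16}=1$ is automatic by Lemma~\ref{lem:Hstructure} is a harmless (and accurate) addition.
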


\begin{proof}
	From what we have seen previously, we know that $g=T_Ph$ for some $h\in H$. A calculation shows that 
	\[
	g^{16}=T_{P+h(P)+\dots + h^{15}(P)}h^{16}.
	\]
	Since $H\cap \MW(X_6)=1$, the result follows. 
\end{proof}

Finally in order to prove unicity of $\sigma$ and $\sigma'$, we need to show that any other automorphism of order 16 is conjugate to a power of $\sigma$ or $\sigma'$ via an element of $\Aut(X_6)$. Recall that we have fixed a primitive 16th root of unity (see Definition~\ref{def:unicite}), and the invariant lattice and the fixed locus of such an automorphism do not depend on the choice of this root of unity. So the question becomes how can we recognize a conjugate of an element of the subgroup $H$? 

Given $h\in H$, we first consider the form of a conjugate of $h$:
\begin{align*}
f&=T_P h T_P^{-1}\\
  &= T_P (h T_{-P} h^{-1})h\\
  &= T_{P-h(P)}h\\
\end{align*}

Furthermore, from the previous lemma, we know that $f$ has order 16 if and only if $f=T_Qh$ with $Q\in \ker(1+h+\dots h^{15})$ and $h^{16}=1$. With these two facts, the last ingredient is the following lemma:

\begin{lem}\label{lem:endo}
	Let $h\in H$ be purely nonsymplectic with order 16. If we consider $1-h$ as an element of $\End(\MW(X_6))$, then $\Im(1-h)=\ker(1+h+\dots +h^{15})$. 
\end{lem}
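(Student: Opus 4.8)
The plan is to prove the two inclusions separately; the first is formal and the second is where the geometry of the fibration \eqref{fibration} must enter. Write $N = 1+h+\dots+h^{15} \in \End(\MW(X_6))$. Since $h$ has order $16$ as an automorphism of $X_6$, it satisfies $h^{16}=\mathrm{id}$ on $\MW(X_6)$, and the telescoping identity $N(1-h) = (1+h+\dots+h^{15})-(h+\dots+h^{16}) = 1-h^{16}=0$ shows that $N$ annihilates every element $(1-h)P$. Hence $\Im(1-h)\subseteq \ker N$ with no further input.

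For the reverse inclusion I would first isolate the cohomological content. Note that $\Im(1-h)$ is exactly the augmentation submodule $I_{\langle h\rangle}\MW(X_6)$ (for a cyclic group every $h^k-1$ is a multiple of $h-1$), so the quotient $\ker N/\Im(1-h)$ is the Tate cohomology group $\hat H^{-1}(\langle h\rangle,\MW(X_6))$ and in particular depends only on the subgroup $\langle h\rangle$, not on the chosen generator. Since $\MW(X_6)\cong \ZZ^6$ has rank $6<\phi(16)=8$, the cyclotomic polynomial $\Phi_{16}$ cannot divide the characteristic polynomial of $h$ on $\MW(X_6)\otimes\QQ$; as $h$ has finite order (hence is semisimple) this forces $h^8=\mathrm{id}$ on $\MW(X_6)$ and shows no eigenvalue of $h$ is a primitive $16$th root of unity. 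Computing eigenspace by eigenspace, $N$ acts as $16$ on the $1$-eigenspace and as $0$ on all other eigenspaces, while $1-h$ is invertible on the latter and zero on the former; thus $\ker N\otimes\QQ = \Im(1-h)\otimes\QQ$ is precisely the non-invariant part, the quotient $\ker N/\Im(1-h)$ is finite, and being annihilated by $16$ it is a finite abelian $2$-group. The whole problem is to show that it is trivial.

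The last step is to kill this $2$-group by pinning down the integral $\ZZ[\langle h\rangle]$-module structure of $\MW(X_6)$. Here I would use the structure of $H$: by Lemma~\ref{lem:Hstructure}, $H\cong \ZZ/2\ZZ\times\ZZ/16\ZZ$ with symplectic part generated by the involution $\sigma^{-1}\sigma'$, so every purely non-symplectic order-$16$ element of $H$ is an odd power of $\sigma$ or of $\sigma'$; since $\langle h\rangle$ determines the statement, it suffices to treat $h=\sigma$ and $h=\sigma'$. Their action on sections is explicit: using $S_1,\dots,S_6$ from \eqref{fibration} one computes, for example, $\sigma(S_1)=S_1$, $\sigma(S_3)=S_5$, $\sigma(S_4)=S_6$, together with $S_{2j}=\ominus S_{2j-1}$ in $\MW(X_6)$. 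I would then complete these to an explicit $\ZZ$-basis of $\MW(X_6)\cong\ZZ^6$, identify $\MW(X_6)$ as a permutation $\langle h\rangle$-module (a direct sum of modules induced from stabilizer subgroups), and conclude by Shapiro's lemma that $\hat H^{-1}(\langle h\rangle,\MW(X_6))$ is a sum of groups $\hat H^{-1}(K,\ZZ)$, each of which vanishes for a finite group $K$. Equivalently, once the matrix of $h$ on a basis is in hand, one checks $\ker N=\Im(1-h)$ directly by a Smith normal form computation.

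The hard part is exactly this integral module structure. Over $\QQ$ the two lattices already coincide, so everything hinges on excluding torsion in $\ker N/\Im(1-h)$; concretely one must rule out a cohomologically nontrivial summand, such as a rank-one $\langle h\rangle$-piece on which $h$ acts by $-1$, since such a summand would contribute a nonzero $\ZZ/2\ZZ$ (for $M=\ZZ$ with $h=-1$ one has $\ker N=\ZZ$ but $\Im(1-h)=2\ZZ$). Verifying that the $h$-orbits of an explicit Mordell–Weil basis genuinely assemble into a permutation module, and hide no such summand, is the delicate point, and it is where the explicit description of the sections of \eqref{fibration} and their images under $\sigma$ and $\sigma'$ is indispensable.
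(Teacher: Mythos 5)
Your first inclusion, the proof that $\ker(1+h+\cdots+h^{15})/\Im(1-h)$ is a finite abelian $2$-group, and the reduction to $h\in\{\sigma,\sigma'\}$ via Tate cohomology are all correct, and they parallel the first half of the paper's proof. But the argument stops exactly where the content of the lemma lies: you never show this $2$-group vanishes, and you acknowledge as much ("the hard part is exactly this integral module structure"). Worse, the route you propose cannot work. A permutation module for $\langle h\rangle\cong\ZZ/16\ZZ$ has orbits of $2$-power length, so a rank-$6$ permutation module must have at least two orbits and hence an invariant sublattice of rank at least $2$. But the invariant part of $\MW(X_6)\otimes\QQ$ is exactly one-dimensional: $S(\sigma)$ has rank $9$ while the trivial lattice $U\oplus E_6$ (rank $8$) is preserved component by component by $\sigma$, and $S(\sigma')$ has rank $7$ while the $\sigma'$-invariants in the trivial lattice have rank $6$; by Shioda--Tate this leaves rank $1$ in $\MW(X_6)\otimes\QQ$ in both cases. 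So $\MW(X_6)$ is not a permutation $\langle h\rangle$-module and Shapiro's lemma is unavailable. Your fallback---Smith normal form "once the matrix of $h$ on a basis is in hand"---founders on the fact that no basis is in hand: the explicit sections satisfy $S_2=\ominus S_1$, $S_4=\ominus S_3$, $S_6=\ominus S_5$, and $S_1\oplus S_3\oplus S_5=O$ (they are cut out by the line $y=t^4$), so $S_0,\ldots,S_6$ generate a subgroup of $\MW(X_6)\cong\ZZ^6$ of rank at most $2$; completing them to a $\ZZ$-basis of the Mordell--Weil group is a substantial unsolved step, not a routine one.

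The paper closes the gap without ever producing explicit generators. It first notes $\rk\Im(1-h)\le 5$ (a non-zero $h$-fixed section gives $\rk\ker(1-h)\ge 1$), and then shows that $(1-h)\otimes\FF_2$ has rank exactly $5$, which forces the index of $\Im(1-h)$ in $\ker(1+h+\cdots+h^{15})$ to be $1$. That mod-$2$ rank computation is lattice-theoretic: the orthogonal complement of the trivial lattice $U\oplus E_6$ in $\Pic(X_6)$ is $E_6(2)$, one gets $\MW(X_6)\cong E_6^\vee(-2)$ with the height pairing, and there is an $h$-equivariant isomorphism $\MW(X_6)\otimes\FF_2\cong T_X^\vee/T_X$; by Brandhorst's description $T_X^\vee/T_X\cong\ZZ[\zeta_{16}]/(1-\zeta_{16})^6$ with $h$ acting by multiplication by $\zeta_{16}$, whose reduction mod $2$ is a single unipotent Jordan block of size $6$, so $(1-h)\otimes\FF_2$ has rank $5$. (This single-block structure is, incidentally, an independent confirmation that your permutation-module hypothesis fails: a permutation module would reduce mod $2$ to Jordan blocks of $2$-power sizes.) To repair your argument you would need to replace the Shapiro step by an identification of the mod-$2$ module structure of $\MW(X_6)$ of this kind.
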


\begin{proof}
First notice that $h^{16}-1=(h^{15}+\dots +h+1)(h-1)$, so 
\[
\Im(h-1)\subseteq \ker(h^{15}+\dots +h+1).
\] 
		
Furthermore, we can write $(x^{15} +\dots +1)-16 = (x-1)q(x)$ for some $q(x)\in \ZZ[x]$ using division. Thus if $P\in \ker(h^{15} +\dots +1)$, we have 
\[
16P=(1-h)q(P).
\] 

This shows us that $\rk \Im(1-h)= \rk\ker(1+h+\dots+h^{15})$, so $\Im(h-1)$ is of finite index in $\ker(h^{15} +\dots +1)$. In fact, this shows that the index is a divisor of 16. We need to see that this index is 1.
	
Furthermore, we can see that $\rk\ker(1-h)\geq 1$. For example, we will show that the sections $S_1$ and $S_2$ (see Figure~\ref{fig:sections}) are fixed by $\sigma$, and the section $S_1+S_2$ is fixed by $\sigma'$. Since we are working with free $\ZZ$-modules, we see that 
\[
\rk\Im(1-h)=
6-\rk\ker(1-h)\leq 5.
\] 
In order to show  that $[\Im(1-h): \ker(1+h+\dots+h^{15})]=1$, we will tensor with $\FF_2$, the field of order 2, and show that $(1-h)\otimes \FF_2\in \End(\FF_2^6)$ has rank 5. This will then imply the result. 
	
	
First, we consider a little more lattice theory. We will show that the action of $h$ on $\End(\FF_2^6)$ is the same as the action of $h$ on $T^\vee_X/T_X$. From there we will be able to explicitly describe the action to see it is rank 5.

To begin, let $N=U\oplus E_6$ generated by the $IV^*$ fiber and the zero section, and let $M=N^\perp$ the orthogonal complement in $\Pic(X_6)$ (these are the so-called {\it Trivial lattice} and $N^\perp$ is the opposite of the {\it Narrow Mordell-Weil lattice}, resp.; see \cite[Chapter 6]{SchSh}). Tensoring with the 3-adics $\ZZ_3$, we see that $\Pic(X_6)\otimes \ZZ_3$ is unimodular, so 
\[
(q_M)_3=-(q_N)_3=-q_N,
\]
since $N$ is 3--elementary. Here the subscript indicates we consider only the $\ZZ/3\ZZ$ part of the (finite) discriminant quadratic form (see e.g. \cite{nikulin}). 
 	
Furthermore, if we tensor with the 2-adics $\ZZ_2$, we see that $N\otimes\ZZ_2$ is unimodular, so 
\[
\Pic(X_6)\otimes \ZZ_2=(N\otimes \ZZ_2)\oplus(M\otimes \ZZ_2).
\] 
From these two facts, we obtain 
\[
q_M=-q_N\oplus q_{X_6},
\]  	
which together with the signature of $M$ determines the genus (see \cite[Corollary 1.9.4]{nikulin}).

Notice further that $\rk M=l(A_M)_2=6$, and since $(q_M)_2$ takes values in the 2-adics, we can divide the bilinear form on $M$ by 2 (see e.g. \cite[Proof of Theorem 1.16.4]{nikulin}) to obtain an even, negative definite lattice of rank 6 and determinant 3. The only such lattice is $E_6$, so we obtain $M=E_6(2)$. In fact by \cite[Theorem 6.44]{SchSh}, the lattice $M=E_6(2)$ is a sublattice of $\MW(X_6)$ of index 3.
So that over the $2$-adics, we see $E_6(2)\otimes \ZZ_2=\MW(X_6)\otimes \ZZ_2$. On the other hand over the $3$-adics $\Pic(X_6)$ is unimodular so that by \cite[Theorem 6.51]{SchSh} we know that on the $3$-adics the Mordell-Weil lattice $\MW(X_6)$ is isomorphic the dual of the narrow Mordell-Weil lattice which is $(E_6(-2))^\vee$,
by combining these two facts we get that $\MW(X_6)=E_6^\vee (-2)$, where we take $\MW(X_6)$ as a lattice with the {\it height pairing} (for the defintion of height pairing see \cite[Definition 6.21]{SchSh} and observe that $\MW(X_6)$ does not need to be an integral lattice \cite[Remark 6.25]{SchSh}). By using the decomposition:
\[
\Pic(X_6)\otimes \ZZ_2=(N\otimes \ZZ_2)\oplus(M\otimes \ZZ_2).
\] 
and the fact that the K3 lattice is unimodular we get that 
\[
\MW(X_6)\otimes \FF_2\cong \Pic(X_6)^\vee/\Pic(X_6)\cong T_X^\vee/T_X,
\] 
and this isomorphism is $h$--equivariant.

	The last step is to understand the action of $h$ on $T^\vee_X/T_X$. By \cite{Br}, we know $T_X^\vee/T_X=\ZZ[\zeta_{16}]/I$ where $I$ is the unique ideal in $\ZZ[\zeta_{16}]$ with norm $2^6$. This ideal is $(1-\xi_{16})^6$. 
	 
	On $\ZZ[\zeta_{16}]/(1-\zeta_{16})^6$ the automorphism $h$ acts by sending $x$ to $\zeta_{16} x$, so the minimal polynomial is the generator of the kernel of 
	\[
	\FF_2[x]\to \ZZ[\xi_{16}]/(1-\xi_{16})^6. 
	\]
	The codomain of this map is isomorphic to $\FF_2[x]/(1-x)^6$, since $(1-x)^6=\gcd(x^8-1, 1-x)$,
	so the minimal polynomial for the action is $(x-1)^6$. 
	This means that $h\otimes \FF_2$ has a single Jordan block of size 6 with eigenvalue 1. In particular the rank of $(h - 1)\otimes \FF_2$ is 5. 	
\end{proof}

Finally, we can prove the unicity of $\sigma$ and $\sigma'$. 

\begin{thm}
	If $g\in \Aut(X_6)$ is a purely non-symplectic automorphism of order 16, then $g$ is conjugate to some power of $\sigma$ or $\sigma'$. 
\end{thm}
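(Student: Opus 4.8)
The plan is to use the semidirect-product structure $\Aut(X_6)\cong\MW(X_6)\rtimes H$ together with Lemma~\ref{lem:Hstructure}, the order-$16$ normal form of Lemma~\ref{lem:semidirect}, and Lemma~\ref{lem:endo} to strip off the Mordell--Weil part of $g$ by a single explicit conjugation. First I would write $g=T_P h$ with $h\in H$. Since $T_P$ is symplectic we have $\alpha(g)=\alpha(h)$, and as $g$ acts on $\omega_{X_6}$ by a primitive $16$th root of unity, so does $h$; in particular $h$ is purely non-symplectic of order $16$.

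Next I would identify such an $h$ with a power of $\sigma$ or $\sigma'$. By Lemma~\ref{lem:Hstructure} we may write $H=\langle\sigma\rangle\times\langle\tau\rangle$ with $\sigma$ of order $16$ and $\tau=\sigma^{-1}\sigma'$ the generator of the symplectic subgroup $H_s$ (so $\tau^2=1$ because $\sigma^2=(\sigma')^2$, and $\sigma'=\sigma\tau$). Every element of $H$ is then $\sigma^a\tau^b$ with $\alpha(\sigma^a\tau^b)=\xi_{16}^a$. The requirement that $h$ be purely non-symplectic of order $16$ forces $a$ to be odd; then $h=\sigma^a$ when $b=0$, while $h=\sigma^a\tau=(\sigma')^a$ when $b=1$, using that $a$ odd gives $\tau^a=\tau$. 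Thus $h$ is already a power of $\sigma$ or of $\sigma'$, and it only remains to conjugate $g$ to $h$.

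Finally I would locate $P$ inside $\Im(1-h)$ and conjugate. By Lemma~\ref{lem:semidirect} the relation $g^{16}=1$ gives $P\in\ker(1+h+\dots+h^{15})$, and by Lemma~\ref{lem:endo} this kernel equals $\Im(1-h)$; hence $-P\in\Im(1-h)$ and there is a section $Q\in\MW(X_6)$ with $(1-h)Q=-P$. Using the conjugation rule \eqref{eq:MWAction}, namely $h\,T_{-Q}\,h^{-1}=T_{-h(Q)}$, a direct computation yields
\[
T_Q\,g\,T_Q^{-1}=T_{Q+P}\,h\,T_{-Q}=T_{P+(1-h)Q}\,h=T_0\,h=h,
\]
so $g$ is conjugate in $\Aut(X_6)$ to $h$, a power of $\sigma$ or $\sigma'$, as claimed. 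The substantive content sits entirely in Lemma~\ref{lem:endo} (already proved), which guarantees the surjectivity $\Im(1-h)=\ker(1+h+\dots+h^{15})$ that lets us solve $(1-h)Q=-P$; in the theorem itself the argument is then a formal consequence of the group structure. The step demanding the most care here is the bookkeeping that matches the purely non-symplectic order-$16$ elements of $H$ with the powers of $\sigma$ and $\sigma'$ and verifies the conjugation identity, both of which become routine once the normal form $\sigma^a\tau^b$ is fixed.
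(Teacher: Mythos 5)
Your proof is correct and takes essentially the same route as the paper's: the decomposition $g=T_Ph$ from the semidirect-product structure, the order-16 constraint placing the translation part in $\ker(1+h+\dots+h^{15})$, Lemma~\ref{lem:endo} to rewrite it as an element of $\Im(1-h)$, a conjugation by a translation to reduce $g$ to $h\in H$, and Lemma~\ref{lem:Hstructure} to identify $h$. If anything, your ordering is slightly cleaner: you verify that $h$ is purely non-symplectic of order 16 (the hypothesis of Lemma~\ref{lem:endo}) \emph{before} invoking that lemma, and you spell out via the normal form $\sigma^a\tau^b$ why the purely non-symplectic order-16 elements of $H$ are exactly the odd powers of $\sigma$ and $\sigma'$, a point the paper only asserts.
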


\begin{proof}
If $g\in \Aut(X_6)$ is of order 16, then from Lemma~\ref{lem:semidirect}, we see that $g=T_Q h$ with $Q\in \ker(1+h+\dots+h^{15})$, and $h\in H$. By Lemma~\ref{lem:endo}, we have $Q=(1-h)(P)$ for some $P\in\MW(X_6)$, so we have
\begin{align*}
g&=T_Qh\\
 &=T_{P-h(P)}h\\
 &=T_P h T_p^{-1}.
\end{align*}

Finally, since $g$ is purely non-symplectic, and $T_P$ is symplectic, it follows that $h$ must also be purely non-symplectic. 
By Lemma~\ref{lem:Hstructure}, powers of $\sigma$ and $\sigma'$ are the only purely non-symplectic elements of $H$ of order 16.
\end{proof}

We will now study more in details the actions of $\sigma$ and $\sigma'$.

{\bf Action of $\sigma$}.
The automorphism $\sigma$ acts with order four on the fiber over $t=0$ and it fixes the two points $(0:1:0)$ and $(0:0:1)$; 
moreover, by \cite{order16}, it fixes $1$ rational curve, which is the central component of the $IV^*$ fiber, and $8$ isolated points: two on the curve $\cC$ and $6$ on the $IV^*$ fiber. 

By taking the coordinates at infinity we find that the action of $\sigma$ is as follows
\[
(x_1,y_1,t_1)\mapsto (ix_1,\xi_{16}^6 y_1, \xi_{16}^3 t_1).
\]

Clearly on the singular fiber
\[
z_1y_1^2=x_1^3
\]
it fixes the point $(0:1:0)$ and the singular point $(0:0:1)$.

The automorphism $\sigma$ leaves the zero section invariant fixing the points corresponding to the fibers $t=0$ and $t=\infty$. 
We look at the action on the other sections: $S_1$ and $S_2$ are preserved since

\[
((0:\pm t^4:1),t)\mapsto ((0:\pm it^4:1),\xi_{16}^{13}t)=((0:\pm (\xi_{16}^{13}t)^4:1),\xi_{16}^{13}t);
\]
and $\sigma$ fixes two points on each. 

On the other hand
\[
((\pm i:\pm t^4:1),t)\mapsto ((\mp i:\pm it^4:1),\xi_{16}^{13}t)=((\mp i:\pm(\xi_{16}^{13}t)^4:1),\xi_{16}^{13}t)
\] 
thus $S_3$ and $S_5$ are exhanged by $\sigma$ and the same is true for $S_4$ and $S_6$. 
Observe $\sigma^2$ preserves  all the sections $S_1,\ldots,S_6$ and by \cite{order16} it has an isolated fixed point on each of the external components of the $IV^*$ fiber so that $S_2, S_4$ and $S_6$ meet in the same point and the same holds for $S_1,S_3$ and $S_5$ as shown in Figure~\ref{fig:sections}.

Finally observe that on the smooth fiber $\cC$ the automorphism $\sigma$ exchanges the two order two points $(i:0:1)$ and $(-i:0:1)$.

{\bf Action of $\sigma'$}.
The automorphism $\sigma'$ acts with order four on the fiber over $t=0$ and it fixes the two points $(0:1:0)$ and $(0:0:1)$; moreover by \cite{order16}, it acts as a reflection on the $IV^*$ fiber and it fixes $6$ isolated points: two on the curve $\cC$ and $4$ on the $IV^*$ fiber. 

By taking the coordinates at infinity we find that the action of $\sigma'$ is as follows
\[
(x_1,y_1,t_1)\mapsto(ix_1,-\xi_{16}^6 y_1, \xi_{16}^{11} t_1).
\]

Then, as with $\sigma$, it fixes the point $(0:1:0)$ and the singular point $(0:0:1)$.
Moreover $\sigma'$ preserves the zero section and fixes on it the points corresponding to the fibers $t=0$ and $t=\infty$. 
By a similar computation, $\sigma'$ exchanges pairwise the sections $S_1$ and $S_2$, $S_3$ and $S_6$, and $S_4$ and $S_5$. 
This forces the two sections $S_1$ and $S_2$ to meet each an external component of the singular fiber $IV^*$ that the zero section does not meet ($S_1$ and $S_2$ do not meet the zero section). 
The same must be true for the pair $S_3$ and $S_6$ and for the pair $S_4$ and $S_5$.
Finally $\sigma'$ exchanges the two order two points $(i:0:1)$ and $(-i:0:1)$ on $\cC$, as was the case with $\sigma$ as well. 

Therefore the diagram is as in Figure~\ref{fig:sections}.

\begin{figure}[ht!]
\caption{Diagram with sections}
\centering\begin{tikzpicture}[xscale=.6,yscale=.5]

\draw [thick](-1,7)--(8,7);

\draw [thick](-0.2,5.8)--(1.2,7.2);
\draw [thick](0,6.2)--(0,1);

\draw [thick](5.8,5.8)--(7.2,7.2);
\draw [thick](6,6.2)--(6,1);

\draw [thick](2.8,5.8)--(4.2,7.2);
\draw [thick](3,6.2)--(3,-1);

\draw [thick](10,7)--(10,-1);
\begin{scope}[every node/.style={circle, draw, fill=black!50, inner sep=0pt, minimum width=4pt}]

\draw (10,6) node{};
\draw (10,4) node{};
\draw (10,2) node{};
\draw (10,0) node{};
\draw (3,0) node{};
\draw (0,2) node{};
\draw (6,2) node{};
\end{scope}

\draw [ dashed] (11,0)--(2,0);
\node[ right] at (11,0) {$S_0$};

\draw   [dashed] plot [smooth] coordinates {(10.5,6.5) (10,6) (6,2) (5.5,1.3)};
\node[right] at (10.2,6.9) {$S_2$};

\node[right] at (10.2,4.7) {$S_4$};
\draw  [dashed] plot [smooth] coordinates {(10.5,4.5) (10,4) (6,2) (5.5,1.5)};

\draw  [dashed]  (10.5,2) --(5.5,2);
\node[ right] at (10.4,2) {$S_6$};

\draw [dashed] plot [smooth] coordinates {(12,4.5) (10,4) (0,2) (-1,2)};
\node[ right] at (12,4.5) {$S_3$};

\draw [dashed] plot [smooth] coordinates {(12,6.5) (10,6) (0,2) (-1,1.5)};
\node[right] at (12,6.5) {$S_1$};

\draw [dashed] plot [smooth] coordinates {(12,2.5) (10,2) (5,1) (0,2) (-1,2.5)};
\node[right] at (12,2.5) {$S_5$};

\node[right, below] at (11.2,6) {$(0:0:1)$};
\node[right, below] at (11.2,4) {$(i:0:1)$};
\node[right, below] at (11.4,2) {$(-i:0:1)$};
\node[right, below] at (11.2,0) {$(0:1:0)$};

\end{tikzpicture}
\label{fig:sections}
\end{figure}

\subsection{Invariant lattices}

Prior to computing the invariant lattice, we show a minimal set of generators for $\Pic(X_6)$ and an embedding of the lattice $T_{4,4,4}$ into $\Pic (X_6)$. This will be extremely useful in the computations of the invariant lattice for the two automorphisms $\sigma$ and $\sigma'$, that we give in Sections~\ref{ss:sigma} and \ref{ss:sigma'}. 

First, to find generators for $\Pic(X_6)$, we construct the incidence graph of the curves depicted in  Figure~\ref{fig:2nd_fibration} and look for an appropriate set of generators. This incidence graph is depicted in  Figure~\ref{fig:PicX_6Graph1}.  
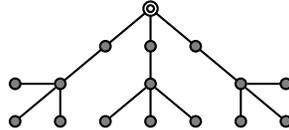
\begin{figure}[ht]
\caption{Configuration of curves from the elliptic fibration $\cE$. The vertex with double circle is the section.}
\centering
	
\begin{tikzpicture}[xscale=.6,yscale=.5, thick]
	
	\node[circle, draw, double, inner sep=0pt, minimum width=4pt, name=cw] at (0,0) {};

\begin{scope}[every node/.style={circle, draw, fill=black!50, inner sep=0pt, minimum width=4pt}]
	\draw (cw) -- (-1,-1) node{}
	-- (-2,-2) node[name=e1]{}
	-- (-2,-3) node{};
	\draw (-3,-2) node{} -- (e1) -- (-3,-3) node{};
	
	\draw (cw) -- (0,-1) node{}
	-- (0,-2) node[name=e2]{}
	-- (0,-3) node{};
	\draw (-1,-3) node{} -- (e2) -- (1,-3) node{};
	
	\draw (cw) -- (1,-1) node{}
	-- (2,-2) node[name=e3]{}
	-- (2,-3) node{};
	\draw (3,-2) node{} -- (e3) -- (3,-3) node{};

\end{scope}	
	
\end{tikzpicture}
\label{fig:PicX_6Graph1}
\end{figure}

As before, we set up the incidence matrix for this graph, and we can see that of these 16 curves, two are redundant. One can check that two of the vertices of valence 1 are redundant, as long as they are taken from different branches. For example, $\Pic (X_6)$ is generated by the curves pictured in Figure~\ref{fig:PicX_6Graph}.

\begin{figure}[ht]
	\caption{Curves from the elliptic fibration $\cE$. The filled vertices form a minimal set of generators for $\Pic (X_6)$.}
	\centering
	
\begin{tikzpicture}[xscale=.6,yscale=.5, thick]
	
	\node[circle, draw, inner sep=0pt, fill=black!50, minimum width=4pt, name=cw] at (0,0) {};

\begin{scope}[every node/.style={circle, draw, fill=black!50, inner sep=0pt, minimum width=4pt}]
	\draw (cw) -- (-1,-1) node{}
	-- (-2,-2) node[name=e1]{}
	-- (-2,-3) node{};
	\draw (e1) -- (-3,-3) node{};
	
	\draw (cw) -- (0,-1) node{}
	-- (0,-2) node[name=e2]{}
	-- (0,-3) node{};
	\draw (-1,-3) node{} -- (e2) -- (1,-3) node{};
	
	\draw (cw) -- (1,-1) node{}
	-- (2,-2) node[name=e3]{}
	-- (2,-3) node{};
	\draw (e3) -- (3,-3) node{};

\end{scope}

	\node[circle, draw, inner sep=0pt, minimum width=4pt, name=v3] at (3,-2) {};
	\node[circle, draw, inner sep=0pt, minimum width=4pt, name=v1] at (-3,-2) {};
	\draw[dotted] (v1) -- (e1);
	\draw[dotted] (v3) -- (e3);

	\end{tikzpicture}
	\label{fig:PicX_6Graph}
\end{figure}
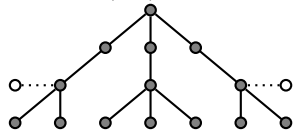

Furthermore, we can see the $T_{4,4,4}$ embedded primitively in that graph, for example, as in Figure~\ref{fig:T4embedding}. We see from this computation that given 7 rational curves forming the $IV^*$ and 3 more disjoint sections meeting the $IV^*$ in different components (in order to complete the three branches) one recognizes the $T_{4,4,4}$. Thus we see $T_{4,4,4}$ embedded primitively into $\Pic (X_6)$. We will use this primitive embedding for computing the invariant lattices of $\sigma$ and $\sigma'$.

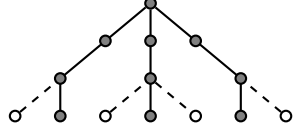
\begin{figure}[ht]
	\caption{$T_{4,4,4}$ embeds in $\Pic (X_6)$}
	\centering
	
	\begin{tikzpicture}[xscale=.6,yscale=.5, thick]
	
	\node[circle, draw, inner sep=0pt, fill=black!50, minimum width=4pt, name=cw] at (0,0) {};

	\begin{scope}[every node/.style={circle, draw, fill=black!50, inner sep=0pt, minimum width=4pt}]
	\draw (cw) -- (-1,-1) node{}
	-- (-2,-2) node[name=e1]{}
	-- (-2,-3) node{};
	
	\draw (cw) -- (0,-1) node{}
	-- (0,-2) node[name=e2]{}
	-- (0,-3) node{};

	\draw (cw) -- (1,-1) node{}
	-- (2,-2) node[name=e3]{}
	-- (2,-3) node{};

	\end{scope}

	\node[circle, draw, inner sep=0pt, minimum width=4pt, name=v3] at (3,-3) {};
	\node[circle, draw, inner sep=0pt, minimum width=4pt, name=v1] at (-3,-3) {};
	\node[circle, draw, inner sep=0pt, minimum width=4pt, name=y1] at (-1,-3) {};
	\node[circle, draw, inner sep=0pt, minimum width=4pt, name=y2] at (1,-3) {};
	\draw[dashed] (v1) -- (e1);
	\draw[dashed] (v3) -- (e3);
	\draw[dashed] (y1) -- (e2) -- (y2);

	\end{tikzpicture}
	\label{fig:T4embedding}
\end{figure}

\subsubsection{Invariant lattice for $\sigma_8$} 

We first give names to components of the $IV^*$ fiber in Figure~\ref{fig:sections}. 
The central component is called $\cE_0$ and it meets the components $\cE_1,\cE_2,\cE_3$. 
The external components are $\cE_4,\cE_5,\cE_6$ and $\cE_4$ meets $\cE_1$, $\cE_5$ meets $\cE_2$ and 
$\cE_6$ meets $\cE_3$. Let $\cE_6$ be the component meeting the zero section $S_0$ and $\cE_4$ (resp. $\cE_5$) be the component meeting the section $S_1$ (resp. $S_2$).

Notice that 
\[
\sigma^2=(\sigma')^2:(x,y,t)\mapsto(x,-y,\xi^5_8t),
\] 
where $\xi_8$ is a primitive 8th root of unity. This is non--symplectic of order 8; let us denote it by $\sigma_8$. In order to compute the invariant lattices $S(\sigma)$ and $S(\sigma')$, we will first compute the lattice $S(\sigma_8)$. 

We observe that $\sigma_8$ fixes the fibers over $t=0, t=\infty$; thus the curve $\cC$ is globally preserved as well as the fiber $IV^*$. The central component $\cE_0$ is fixed by $\sigma_8$ and the action of $\sigma_8$ on $\cC$ is an involution. 
Furthermore, $(\sigma_8)^4$ has order 2, it fixes $\cC$ (a curve of genus 1) and 4 rationals curves. Thus this is case 11 studied in \cite[Theorem 3.3]{order_eight}, and so we know that $\rk S(\sigma_8)=10$.

We now consider the lattice generated by $\{\cE_0, \cE_1,\cE_2,\cE_3,\cE_4,\cE_5,\cE_6, S_0, S_1,S_4\}$, which forms a $T_{4,4,4}$. Since $\sigma_8$ leaves all of the curves in our graph invariant and both have the same rank, $S(\sigma_8)$ is an overlattice.

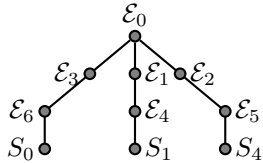
\begin{figure}[ht]
	\caption{Generators for invariant lattice $S(\sigma_8)$. This is the lattice $T_{4,4,4}$.}
	\centering
	
	\begin{tikzpicture}[xscale=.6,yscale=.5, thick]
	
	\node[circle, draw, inner sep=0pt, fill=black!50, minimum width=4pt, name=e0] at (0,0) {};
	
	\begin{scope}[every node/.style={circle, draw, fill=black!50, inner sep=0pt, minimum width=4pt}]
	\draw (e0) -- (-1,-1) node{}
	-- (-2,-2) node[name=e1]{}
	-- (-2,-3) node{};
	
	\draw (e0) -- (0,-1) node{}
	-- (0,-2) node[name=e2]{}
	-- (0,-3) node{};

	\draw (e0) -- (1,-1) node{}
	-- (2,-2) node[name=e3]{}
	-- (2,-3) node{};

	\end{scope}	
	
	\node [above] at (0,0){$\cE_0$};
		
		\node [left] at (-1,-1){$\cE_3$};
		\node [left] at (-2,-2){$\cE_6$};
		\node [left] at (-2,-3){$S_0$};

		\node [right] at (0,-1){$\cE_1$};
		\node [right] at (0,-2){$\cE_4$};
		\node [right] at (0,-3){$S_1$};
		
		\node [right] at (1,-1){$\cE_2$};
		\node [right] at (2,-2){$\cE_5$};
		\node [right] at (2,-3){$S_4$};
	
		\end{tikzpicture}

\end{figure}

Observe that we are taking as generators the components of the $IV^*$ fiber of the elliptic fibration $\mathcal{F}$ and three disjoint sections. 
Recall that $\Pic(X_6)$ is generated by the components of the $IV^*$ fiber, the zero section and 6 sections of infinite order. 
So we get clearly a primitive embedding of $T_{4,4,4}$ into $\Pic(X_6)$ and $S(\sigma_8)=T_{4,4,4}$. 

We will  use this invariant lattice to exhibit a primitive embedding for generators of the invariant lattices $S(\sigma)$. 
In the following computations, we will exhibit a lattice $L_\cB$ with an obvious primitive embedding 
\[
L_\cB \hookrightarrow S(\sigma_8).
\]
Since $S(\sigma_8)$ embeds primitively into $\Pic(X_6)$, we can conclude that $L_\cB$ does as well. As in Method IV from \cite{CP}, we will conclude that $L_\cB$ is the invariant lattice in question. 

Observe that in order to do the same for $\sigma'$, 
one needs to consider a different minimal set of generators for the invariant lattice $S(\sigma_8)$. The set $\{\cE_0, \cE_1,\cE_2,\cE_3,\cE_4,\cE_5,\cE_6, S_0, S_3,S_5\}$.
generates $S(\sigma_8)$ and serves the purpose.

\subsubsection{Invariant lattice for $\sigma$} \label{ss:sigma}

Let $L_\cB$ be the lattice generated by $\cB=\{\cE_0, \ldots, \cE_6, S_0, S_1,S_2\}$ the set of divisors fixed by $\sigma$. This is pictured in Figure~\ref{fig:X_6InvariantGen}. Observe that $S_1$ and $S_2$ meet in $(0:0:1)$ with multiplicity 4.

\begin{figure}[ht]
\caption{Generators for $L_\cB$}
\centering
\begin{tikzpicture}[xscale=.6,yscale=.5, thick]
	
	\node[circle, draw, fill=black!50, inner sep=0pt, minimum width=4pt, name=cw] at (0,0) {};
	
	\node[circle, draw, fill=black!50, inner sep=0pt, minimum width=4pt, name=cy] at (2,-3) {};	
	
\begin{scope}[every node/.style={circle, draw, fill=black!50, inner sep=0pt, minimum width=4pt}]
	\draw (cw) -- (-1,-1) node{}
	-- (-2,-2) node{}
	-- (-2,-3) node[name=e1]{};
	
	\draw (cw) -- (0,-1) node{}
	-- (0,-2) node{}
	-- (0,-3) node[name=e2]{}
	-- (cy);
	
	\draw (cw) -- (1,-1) node{}
	-- (2,-2) node[name=e3]{}
	-- (cy);
\end{scope}	
\node [above] at (0,0){$\cE_0$};
		
		\node [left] at (-1,-1){$\cE_3$};
		\node [left] at (-2,-2){$\cE_6$};
		\node [below] at (-2,-3){$S_0$};

		\node [right] at (0,-1){$\cE_1$};
		\node [right] at (0,-2){$\cE_4$};
		\node [below] at (0,-3){$S_1$};
		
		\node [right] at (1,-1){$\cE_2$};
		\node [right] at (2,-2){$\cE_5$};
		\node [below] at (2,-3){$S_2$};

	\node[below] at (1, -3){\small 4};
\end{tikzpicture}
\label{fig:X_6InvariantGen}
\end{figure}

Again, we look at the incidence matrix for this configuration, and we get the minimal set of generators of $L_\cB$ which is $\{\cE_0, \ldots, \cE_6, S_0, S_1\}$ as depicted in Figure~\ref{fig:X_6InvariantBasis}. Notice $S_2$ was redundant. 

\begin{figure}[ht]
\caption{Generators for the invariant lattice $S(\sigma)$}
	\centering
	\begin{tikzpicture}[xscale=.6,yscale=.5, thick]
	
	\node[circle, draw, fill=black!50, inner sep=0pt, minimum width=4pt, name=cw] at (0,0) {};
	\node[circle, draw, inner sep=0pt, minimum width=4pt, name=cy] at (2,-3) {};	

	\begin{scope}[every node/.style={circle, draw, fill=black!50, inner sep=0pt, minimum width=4pt}]
	\draw (cw) -- (-1,-1) node{}
	-- (-2,-2) node{}
	-- (-2,-3) node[name=e1]{};
	
	\draw (cw) -- (0,-1) node{}
	-- (0,-2) node{}
	-- (0,-3) node[name=e2]{};
		
	\draw (cw) -- (1,-1) node{}
	-- (2,-2) node[name=e3]{};
	
\end{scope}	
	
	\draw[dotted] (e2) -- (cy);
	\draw[dotted] (e3) -- (cy);
	
\end{tikzpicture}
\label{fig:X_6InvariantBasis}
\end{figure}
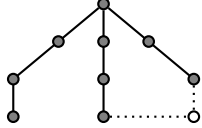  

One sees that it is a lattice of type $T_{3,4,4}$. The discriminant group of $T_{3,4,4}$ is $\mathbb Z/8\mathbb Z$ and the corresponding quadratic form is $\omega^{5}_{2,3}$. This clearly embeds primitively into $S(\sigma_8)$, and therefore the invariant lattice is $S(\sigma)\cong T_{3,4,4}$.

\subsubsection{Invariant lattice for $\sigma'$} \label{ss:sigma'}
The automorphism $\sigma'$ is 
\[\sigma'(x,y,t)=(-x,-iy,\zeta^5t),
\]
it has 6 fixed points and fixes no curves; it corresponds to the second line of Table~\ref{tab:thm3.1}. From the Table, we can conclude that the rank of the invariant lattice $S(\sigma')$ is 7. 

In order to compute the invariant lattice $S(\sigma')$, first observe that the automorphism is a reflection on the fiber of type $IV^*$ and interchanges the sections $S_3$ and $S_6$, and so we get three orbits of invariant curves.
We label the exceptional curves as before. Consider the set constructed from these orbits 
\[
\cB=\{\cE_0, \cE_3,\cE_6, \cE_1+\cE_2,\cE_4+\cE_5, S_0, S_3+S_6\},
\]
and let $L_\cB$ be the lattice generated by this set of divisors. The incidence graph for this set is depicted in Figure~\ref{fig:graphsigma2}. 

\begin{figure}[ht]
	\caption{Generators for $L_\cB$}
	\centering
\begin{tikzpicture}[xscale=.6,yscale=.5, thick]
	
	\node[circle, draw, fill=black!50, inner sep=0pt, minimum width=4pt, name=e0] at (0,0) {};

\begin{scope}[every node/.style={rectangle, draw, fill=black!50, inner sep=0pt, minimum width=4pt,minimum height=4pt}]
	
	\draw (e0) -- (0,-1) node{}
	-- (0,-2) node{}
	-- (0,-3) node[name=e1]{};
	
\end{scope}
	
\begin{scope}[every node/.style={circle, draw, fill=black!50, inner sep=0pt, minimum width=4pt}]
	
	\draw (e0) -- (1,-1) node{}
	-- (2,-2) node[name=e3]{}
	-- (2,-3) node{};	
\end{scope}	
	
	\node[left] at (0,-0.5) {2};
	\node[left] at (0,-1.5) {2};
	\node[left] at (0,-2.5) {2};
	
\end{tikzpicture}
	\label{fig:graphsigma2}
\end{figure}  

The lattice has rank 7 and so the generators in $\cB$ form a minimal set of generators of this lattice.
The invariant lattice $S(\sigma')$ is an overlattice, but we can clearly find a primitive embedding $L_\cB\hookrightarrow S(\sigma_8)$; therefore $L_\cB=S(\sigma')$. Using computations similar to those described previously, this lattice has rank 7 and discriminant quadratic form $v+\omega_{2,3}^{-1}$, and so this gives us $S(\sigma')=U\oplus D_4\oplus \langle -8\rangle$. 

\bibliographystyle{plain}
\bibliography{references}

\end{document}